\theoremstyle{plain}
\newtheorem{theorem}{Theorem}[section]
\newtheorem{lemma}[theorem]{Lemma}
\newtheorem{proposition}[theorem]{Proposition}
\newtheorem{corollary}[theorem]{Corollary}
\newtheorem*{proposition*}{Proposition}
\theoremstyle{definition}
\newtheorem{conjecture}[theorem]{Conjecture}
\newtheorem{remark}[theorem]{Remark}
\newtheorem{definition}[theorem]{Definition}
\newtheorem{example}[theorem]{Example}
\theoremstyle{plain}
\newtheorem{Theorem}{Theorem}[section]
\newtheorem{Conjecture}[Theorem]{Conjecture}
\newtheorem{Proposition}[Theorem]{Proposition}
\numberwithin{equation}{section}
\newcommand{\fA}{{\mathfrak A}}
\newcommand{\fa}{{\mathfrak a}}
\newcommand{\fB}{{\mathfrak B}}
\newcommand{\cC}{{\mathcal C}}
\newcommand{\nCl}{{\mathrm{Cl'_{\fa}}^{+}}}
\newcommand{\cD}{{\mathcal D}}
\newcommand{\ve}{{\varepsilon}}
\newcommand{\fF}{{\mathcal F}}
\newcommand{\bF}{{\mathbf F}}
\newcommand{\cG}{{\mathcal G}}
\newcommand{\cH}{{\mathcal H}}
\newcommand{\cHom}{{\mathcal Hom}}
\newcommand{\wh}[1]{{\widehat{#1}}}
\newcommand{\ov}[1]{{\overline{#1}}}
\newcommand{\1}{{\mathbf 1}}  
\newcommand{\vp}{{\varpi}}
\newcommand{\cL}{{\mathcal L}}
\newcommand{\cM}{{\mathcal M}}
\newcommand{\cN}{{\mathcal N}}
\newcommand{\cP}{{\mathcal P}}
\newcommand{\fp}{{\mathfrak p}}
\newcommand{\sP}{{\mathscr P}}
\newcommand{\vphi}{{\varphi}}
\newcommand{\vpi}{{\varpi}}
\newcommand{\bpsi}{{\Psi}}
\newcommand{\bQ}{{\mathbf Q}}
\newcommand{\br}{{\mathbf r}}
\newcommand{\bR}{{\mathbf R}}
\newcommand{\bcR}{{K \mathcal R}}
\newcommand{\cR}{{\mathcal R}}
\newcommand{\cS}{{\mathscr S}} 
\newcommand{\ti}[1]{{\tilde{#1}}}
\newcommand{\bZ}{{\mathbf Z}}
\DeclareMathOperator{\ab}{ab}
\DeclareMathOperator{\Cl}{Cl}
\DeclareMathOperator{\co}{co}
\DeclareMathOperator{\Coker}{Coker}
\DeclareMathOperator{\Det}{Det}
\DeclareMathOperator{\disc}{disc}
\DeclareMathOperator{\epi}{epi}
\DeclareMathOperator{\Gal}{Gal}
\DeclareMathOperator{\GL}{GL}
\DeclareMathOperator{\Hom}{Hom}
\DeclareMathOperator{\Image}{Im}
\DeclareMathOperator{\Irr}{Irr}
\DeclareMathOperator{\Ind}{Ind}
\DeclareMathOperator{\Ker}{Ker}
\DeclareMathOperator{\LC}{LC}
\DeclareMathOperator{\loc}{loc}
\DeclareMathOperator{\Map}{Map}
\DeclareMathOperator{\nrd}{nrd}
\DeclareMathOperator{\rag}{rag}
\DeclareMathOperator{\Stab}{Stab}
\DeclareMathOperator{\Supp}{Supp}
\DeclareMathOperator{\Tr}{Tr}
\begin{document}
\title[Relative Galois module structure]{On the relative Galois module
structure of rings of integers in tame extensions} 
\author{A. Agboola \and L. R. McCulloh}
\address{Department of Mathematics \\
         University of California \\
         Santa Barbara, CA 93106. }
\email{agboola@math.ucsb.edu}
\address{Department of Mathematics \\
         University of Illinois \\
         1409 W. Green Street \\
         Urbana, IL 61801.}
\email{mcculloh@math.uiuc.edu}
\date{July10, 2018. Final version. To appear in Algebra and Number Theory.}

\begin{abstract}
Let $F$ be a number field with ring of integers $O_F$ and let $G$ be a
finite group. We describe an approach to the study of the set of
realisable classes in the locally free class group $\Cl(O_FG)$ of
$O_FG$ that involves applying the work of the second-named author in
the context of relative algebraic $K$ theory. For a large class of
soluble groups $G$, including all groups of odd order, we show
(subject to certain mild conditions) that the set of realisable
classes is a subgroup of $\Cl(O_FG)$. This may be viewed as being a
partial analogue in the setting of Galois module theory of a classical
theorem of Shafarevich on the inverse Galois problem for soluble
groups.
\end{abstract}

\maketitle

\tableofcontents


\section{Introduction}

Suppose that $F$ is a number field with ring of integers $O_F$, and
let $G$ be a finite group. If $F_\pi/F$ is any tame Galois $G$-algebra
extension of $F$, then a classical theorem of E. Noether implies that
the ring of integers $O_\pi$ of $F_\pi$ is a locally free
$O_FG$-module, and so determines a class $(O_\pi)$ in the locally free
class group $\Cl(O_FG)$ of $O_FG$. Hence, if we write $H^1_t(F,G)$ for
the pointed set of isomorphism classes of tame $G$-extensions of $F$,
then we obtain a map of pointed sets
\[
\psi: H^1_t(F,G) \to \Cl(O_FG); \quad [\pi] \mapsto (O_{\pi}).
\]
Even when $G$ is abelian, so that $H^1_t(F, G)$ is actually a group,
this map is almost never a group homomorphism. We say that an element
$c \in \Cl(O_FG)$ is \textit{realisable} if $c = (O_\pi)$ for some
tame Galois $G$-algebra extension $F_\pi/F$, and we write $\cR(O_FG)$
for the collection of realisable classes in $\Cl(O_FG)$.  These
classes are natural objects of study, and they have arisen in a number
of different contexts in Galois module theory. The problem of
describing $\cR(O_FG)$ for a given $G$ may be viewed as being a loose
analogue of the inverse Galois problem in the setting of arithmetic
Galois module theory.

When $G$ is abelian, the second-named author has given a complete
description of $\cR(O_FG)$ by showing that it is equal to the kernel
of a certain Stickelberger homomorphism on $\Cl(O_FG)$ (see
\cite{Mc1}). In particular, he has shown that $\cR(O_FG)$ is in fact a
group. In subsequent unpublished work (\cite{Mc2}, \cite{McS}) he
showed that, for arbitrary $G$, the set $\cR(O_FG)$ is always
contained in the kernel of this Stickelberger homomorphism, and he
raised the question of whether or not $\cR(O_FG)$ is in fact always
equal to this kernel. This question has inspired research by a number
of different authors, and we refer the reader to e.g. \cite{BS},
\cite{BGS}, \cite{FB}, and to the bibliographies of these papers, for
further information concerning previous work on this problem.

In this paper we shall describe a new approach to studying this
topic that involves combining the methods introduced by the
second-named author in \cite{Mc1} and \cite{Mc2} with techniques
involving relative algebraic $K$-theory and categorical twisted forms
introduced by D. Burns and the first-named author in \cite{AB}. This
enables us to both clarify certain aspects of the theory of realisable
classes and to establish new results. Although our perspective is
somewhat different, it should be stressed that many of the
main ideas that we use are in fact already present in some form in
\cite{Mc1} and \cite{Mc2}.

Let us now describe the contents of this paper in more detail. In
Section \ref{S:galres} we recall some basic facts concerning principal
homogeneous spaces, Galois algebras and resolvends; these play a key
role in everything that follows. Next, we assemble a number of
technical results explaining how resolvends may be used to compute
discriminants of rings of integers in Galois $G$-extensions. We also
discuss how certain Galois cohomology groups may be expressed in terms
of resolvends in a manner that is very useful for calculations in
class groups and $K$-groups.  In Section \ref{S:det} we explain how
determinants of resolvends may be represented in terms of certain
character maps, and we recall an approximation theorem of A. Siviero
(which is in turn a variant of \cite[Theorem 2.14]{Mc1}).

We begin Section \ref{S:reshom} by outlining the results we need about
twisted forms and relative algebraic $K$-groups from \cite{AB}. Each
tame $G$-extension $F_{\pi}/F$ of $F$ has an associated resolvend
isomorphism
\[
\br_G: F_{\pi} \otimes_F F^c \simeq F^cG
\]
of $F^cG$-modules, and this may be used to construct a categorical
twisted form which is represented by an element $[O_{\pi},O_FG;\br_G]$
in a certain relative algebraic $K$-group $K_0(O_FG,F^c)$. The group
$K_0(O_FG,F^c)$ admits a natural surjection onto the locally free
class group $\Cl(O_FG)$, sending $[O_{\pi},O_FG;\br_G]$ to
$(O_{\pi})$, and so there is a map of pointed sets
\[
\bpsi: H^1_t(F,G) \to K_0(O_FG,F^c); \quad [\pi] \mapsto
     [O_{\pi},O_FG;\br_G]
\]
which is a refinement (more precisely, a lifting) of the map $\psi$
above.

Crucial to our approach is the fact that each of the constructions
that we have just described admits a local variant. Let $v$ be any
place of $F$, and write $H^1_t(F_v,G)$ for the pointed set of
isomorphism classes of tame $G$-extensions of $F_v$.  Then there is a
localisation homomorphism
\[
\lambda_v: K_0(O_FG, F^c) \to K_0(O_{F_v}G, F^c_v)
\]
as well as a map of pointed sets
\[
\bpsi_v: H^1_t(F_v,G) \to K_0(O_{F_v}G,F_v^c); \quad [\pi_v] \mapsto
     [O_{\pi_v},O_{F_v}G; \br_G].
\]

The following result reflects the fact that $[O_{\pi},O_FG;\br_G]$ is
a much finer structure invariant than $(O_{\pi})$ (see Proposition
\ref{P:kernel} below):

\begin{Proposition} \label{P:A}
The kernel of $\bpsi$ is finite.
\end{Proposition}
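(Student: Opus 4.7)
My plan is to translate the vanishing of $\bpsi([\pi])$ into an explicit condition on the resolvend of an $O_FG$-generator of $O_\pi$, and then bound the number of extensions satisfying that condition using finite generation of the unit group of $O_FG$.

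First, I would observe that the natural surjection $K_0(O_FG, F^c) \twoheadrightarrow \Cl(O_FG)$ sends $\bpsi([\pi])$ to $(O_\pi)$, so any class in $\ker(\bpsi)$ forces $(O_\pi) = 0$; consequently $O_\pi = O_FG \cdot a$ for some free generator $a$. For such a choice, the resolvend $\br(a) \in (F^cG)^\times$ provides the trivialization $F^c \otimes_F F_\pi \cong F^cG$ built into the definition of $[O_\pi, O_FG; \br_G]$. Using the localization exact sequence
\[
K_1(O_FG) \to K_1(F^cG) \xrightarrow{\partial} K_0(O_FG, F^c) \to \Cl(O_FG) \to 0,
\]
the class $\bpsi([\pi])$ can, after using the trivialization from $a$, be identified with $\partial([\br(a)])$; its vanishing is therefore equivalent to the assertion that the class of $\br(a)$ in $K_1(F^cG)$ lifts to a class in $K_1(O_FG)$.

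Next, I would invoke the standard fact (recalled in Section \ref{S:galres}) that the tame $G$-algebra extension $F_\pi/F$ is determined by $\br(a)$ modulo the right $G$-action on $(F^cG)^\times$ and modulo multiplication by $(FG)^\times$ on the left. Combined with the previous step, this identifies $\ker(\bpsi)$ with a subset of the image of $(O_FG)^\times$ in the quotient $(F^cG)^\times / \bigl((FG)^\times \cdot G\bigr)$ relevant to the resolvend description of $H^1(F,G)$.

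To conclude, I would appeal to finite generation. The group $(O_FG)^\times$ is finitely generated as an abelian group modulo torsion: its reduced norms lie in the unit group of a product of rings of integers in number fields (to which Dirichlet applies), and the kernel of the reduced norm is a finite quotient of a commutator subgroup. The $\Gamma_F$-cocycle extracted from $\br(a)$ takes values in the finite group $G$, so the image of any element of $\ker(\bpsi)$ in the above quotient is annihilated by $|G|$ and therefore lies in the torsion part of a finitely generated group; this torsion part is finite. The main technical obstacle is the careful identification in Step~2 of the relative $K$-theory class with $\partial([\br(a)])$ in a form suitable for the finiteness argument: this requires unpacking the definition of $[O_\pi,O_FG;\br_G]$ from \cite{AB} together with the explicit resolvend description of the trivialization, and it is there that the proof really does its work — once this translation is in place, the remaining finiteness follows from classical facts about unit groups and continuous cocycles with values in finite groups.
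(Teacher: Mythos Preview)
Your argument has a genuine gap at the passage from $K_1$ back to the resolvend. The exact sequence
\[
K_1(O_FG) \to K_1(F^cG) \xrightarrow{\partial^1} K_0(O_FG,F^c) \to \Cl(O_FG) \to 0
\]
together with $\bpsi([\pi])=0$ does show that the class of $\br_G(a)$ in $K_1(F^cG)$ lifts to $K_1(O_FG)$. But $K_1(F^cG)\xrightarrow{\sim} Z(F^cG)^\times$ via the reduced norm, so what you actually obtain is a condition on $\nrd(\br_G(a))=\Det(\br_G(a))$, \emph{not} on $\br_G(a)$ itself. For non-abelian $G$ the map $(F^cG)^\times\to K_1(F^cG)$ is far from injective, and there is no reason why $\br_G(a)$ should lie in $(FG)^\times\cdot(O_FG)^\times\cdot G$ inside $(F^cG)^\times$; your claimed identification of $\ker(\bpsi)$ with a subset of the image of $(O_FG)^\times$ therefore fails. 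What the reduced-norm condition does give is that $\nrd(\br_G(a))$ is $\Omega_F$-fixed, hence $\pi(\Omega_F)\subseteq G'$ and $[\pi]\in H^1(F,G')$; but this set is infinite whenever $G'\neq 1$, so the finite-generation/torsion argument cannot conclude from here. (Note also that $H^1(F,G)$ is only a pointed set for non-abelian $G$, so speaking of torsion elements in it is not meaningful.)

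The paper supplies exactly the missing arithmetic input. Working locally, it uses the Stickelberger factorisation $\br_G(a_v)=u\cdot\br_G(a_{nr})\cdot\br_G(\vphi_{v,s})$ of Section~\ref{S:local} together with the computation $\Det(\br_G(\vphi_{v,s}))(\chi)=\vp_v^{\langle\chi,s\rangle_G}$ from Proposition~\ref{P:reseq}: if $\Det(\br_G(a_v))\in\Det((O_{F_v}G)^\times)$ then the ramified factor must be trivial, so $[\pi_v]\in H^1_{nr}(F_v,G)$. Combined with the $G'$-conclusion above this forces $[\pi]\in H^1_{nr}(F,G')$, and that set is finite by Hermite--Minkowski. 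Your outline does not contain any mechanism to detect unramifiedness, and without it the argument cannot go through for non-abelian $G$ (for abelian $G$ your approach does work, since then $\nrd$ is an isomorphism and in fact $\bpsi$ is injective).
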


Let $G'$ denote the derived subgroup of $G$. We may identify $H^1(F,
G')$ with a subset of $H^1(F, G)$ via the exact sequence $0 \to G' \to
G \to G^{ab} \to 0$. Proposition \ref{P:A} is proved by showing that
$\Ker(\bpsi)$ is a subset of the pointed set $H^{1}_{fnr}(F, G')$ of
isomorphism classes of $G'$-Galois $F$-algebras that are unramified at
all finite places of $F$; this last set is finite because there are
only finitely many unramified extensions of $F$ of bounded degree. If
$G$ is abelian, the map $\bpsi$ is injective (see Proposition
\ref{P:gab}). In many cases one can show that $\Ker(\bpsi) =
H^{1}_{fnr}(F, G')$, but we do not know whether this equality always
holds.

Write $\bcR(O_FG)$ for the image of $\bpsi$, i.e. for the collection
of realisable classes of $K_0(O_FG, F^c)$. The central conjecture of
this paper gives a precise description of $\bcR(O_FG)$ in terms of a
local-global principle for the relative algebraic $K$-group $K_0(O_FG,
F^c)$. This may be described as follows. 

For each place $v$ of $F$, let $H^{1}_{nr}(F_v, G)$ denote the subset
$H^1_t(F_v,G)$ consisting of isomorphism classes of unramified
$G$-extensions of $F_v$. We define a pointed set of ideles
$J(H^1_t(F,G))$ of $H^1_t(F,G)$ to be the restricted direct product
over all places $v$ of the sets $H^1_t(F_v,G)$ with respect to
the subsets $H^{1}_{nr}(F_v,G)$ (see Definition \ref{D:psid}). The
natural maps $H^1_t(F,G) \to H^1_t(F_v,G)$ for each $v$ induce a map
$H^1_t(F,G) \to J(H^1_t(F,G))$. We also define a group of ideles
$J(K_0(O_FG,F^c))$ of $K_0(O_FG,F^c)$ to be the restricted direct
product over all places of $F$ of the groups $K_0(O_{F_v}G,F^c_v)$
with respect to the subgroups $K_0(O_{F_v}G, O_{F^{c}_{v}})$ (see
Definition \ref{D:kiddes}).  We show that the maps $\lambda_v$ above
induce an injective localisation map
\[
\lambda: K_0(O_FG,F^c) \to J(K_0(O_FG,F^c))
\]
(see Proposition \ref{P:locinj}), and that the maps $\bpsi_v$ induce
an idelic version
\[
\bpsi^{id}: J(H^1_t(F,G)) \to J(K_0(O_FG,F^c))
\]
of the map $\bpsi$ (see Definition \ref{D:psid}). We conjecture that
$\bcR(O_FG)$ has the following description (see Conjecture \ref{C:cc}
below):

\begin{Conjecture} \label{C:conj}
$\bcR(O_FG) = \lambda^{-1}(\Image(\bpsi^{id}))$.
\end{Conjecture}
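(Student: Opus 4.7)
The plan is to verify the two inclusions separately. The inclusion $\lambda(\bcR(O_FG)) \subseteq \Image(\lambda) \cap \Image(\bpsi^{id})$ ought to be essentially formal: each $[\pi] \in H^1_t(F,G)$ has a natural image in $J(H^1_t(F,G))$, obtained by completing $F_\pi/F$ at each prime of $F$, and one expects Definition \ref{D:psid} to be set up precisely so that $\bpsi^{id}$ applied to this idelic image agrees with $\lambda \circ \bpsi$ applied to $[\pi]$. So this direction reduces to unwinding the definitions and checking functoriality of the resolvend construction under localisation.

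The substance of the conjecture lies in the reverse inclusion. Given $x \in K_0(O_FG,F^c)$ and an idele $y \in J(H^1_t(F,G))$ with $\lambda(x) = \bpsi^{id}(y)$, I would aim to construct a global tame Galois $G$-extension $F_\pi/F$ with $\bpsi([\pi]) = x$. Following the strategy pioneered in \cite{Mc1} and extended in \cite{Mc2}, this naturally splits into two steps. First, realise the prescribed local ramification data encoded by $y$ at a finite set $S$ of primes by some global tame $G$-extension, tamely ramified only within $S$; the basic input here is a non-abelian Chebotarev or Grunwald--Wang style density result producing primes whose decomposition behaviour matches the prescribed local conjugacy-class data. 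Second, correct the resulting image in $K_0(O_FG,F^c)$ to equal $x$ on the nose, by twisting the extension by an auxiliary tame extension unramified outside a controlled set of primes, chosen by a further density argument so that its resolvend contribution cancels the discrepancy. The resolvend formalism and discriminant identities gathered earlier in the paper would be used throughout to track how each local modification alters the class in $K_0(O_FG,F^c)$.

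The main obstacle will be the correction step. In the abelian setting of \cite{Mc1}, the Stickelberger-kernel characterisation of realisable classes is obtained via a Dirichlet-type density argument, and non-commutativity enters only through explicit resolvend identities, but here three additional difficulties appear. Cohomologically, $H^1(F,G)$ is only a pointed set, so combining contributions from different primes must be done at the level of resolvends rather than $G$-valued cocycles. Geometrically, the local extensions match $y$ only up to $G$-conjugation, so the required density input is markedly stronger. Most seriously, $K_0(O_FG,F^c)$ is a strictly finer invariant than $\Cl(O_FG)$, tracking normal-basis generator data in addition to the isomorphism class; the subgroup of $K_0(O_FG,F^c)$ generated by admissible Chebotarev-style corrections need not a priori exhaust all possible discrepancies between $x$ and the extension produced in the first step. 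Showing that the available correction classes are precisely those selected by the condition $\lambda(x) \in \Image(\bpsi^{id})$ is, I expect, where the conjecture genuinely awaits proof.
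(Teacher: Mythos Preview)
The statement you are attempting is a \emph{conjecture} in the paper, not a theorem: there is no proof of it in general. The paper establishes it only in two special cases, namely when $G$ is abelian (Theorem~\ref{T:ab}) and when $G$ is nilpotent subject to the hypotheses of Theorem~\ref{T:nil}. You yourself seem to recognise this in your final paragraph, so your proposal is better read as a heuristic strategy than as a proof attempt.

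That said, it is worth noting that your proposed mechanism does not match the paper's arguments even in the cases it does settle. In the abelian case the proof of Theorem~\ref{T:ab} uses no density or Chebotarev-type input whatsoever: because $\nrd$ is an isomorphism, the archimedean component $x_\infty$ of an idelic representative of a locally cohomological $x$ is forced to lie in $H_t(FG)$ already, and the global $[\pi]$ is read off directly from $x_\infty$. No auxiliary extension is constructed and no correction step is needed. In the nilpotent case (Section~\ref{S:nil}) the paper proceeds by induction on $|G|$ via a central quotient $G \to \ov{G}$, solves an embedding problem using Proposition~\ref{P:nlift} to lift a global $\ov{G}$-extension to a $G$-extension with prescribed inertial behaviour, and then invokes the coprimality hypothesis $(|G^{ab}|,h_F)=1$ via Proposition~\ref{P:unram} to kill the residual unramified discrepancy. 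Again there is no Grunwald--Wang or Chebotarev step; the local-to-global passage is cohomological (Brauer--Hasse--Noether in Lemma~\ref{L:tlift}) rather than analytic.

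Your two-step outline---realise prescribed ramification, then correct by a twist---is closer in spirit to the original arguments of \cite{Mc1} for $\cR(O_FG)$ in $\Cl(O_FG)$, but the paper does not carry that programme through at the level of $K_0(O_FG,F^c)$, and your own closing remarks correctly flag why: the finer invariant leaves no room for the slack that the class-group argument exploits. So there is no gap to point to in a proof, because no general proof exists; your diagnosis of where the difficulty lies is reasonable, but your proposed route is not the one the paper takes in the cases it resolves.
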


In other words, our conjecture predicts that an element $x$ lies in the
image of $\bpsi$ if and only if $\lambda_v(x)$ lies in the image of
$\bpsi_v$ for every place $v$ of $F$. We remark that it
follows directly from the definitions that
\[
\bcR(O_FG) \subseteq \lambda^{-1}(\Image(\bpsi^{id})).
\]

We point out that, in contrast to $\cR(O_FG)$, it is not difficult to
show that if $G$ is non-trivial, then $\bcR(O_FG)$ is never a subgroup
of $K_0(O_FG, F^c)$ (cf. \cite[Remarks 6.13(i)]{AB}, \cite[Remark
  2.10(iii)]{AB2}). Nevertheless, by applying the methods of
\cite{Mc1} and \cite{Mc2} in the present context, we show that
Conjecture \ref{C:conj} implies both an affirmative answer to the
second-named author's question concerning $\cR(O_FG)$ as well as a
positive solution to the inverse Galois problem for $G$ over $F$ (see
Theorems \ref{T:idgp}, \ref{T:congp} and \ref{T:fieldres} below):

\begin{Theorem} \label{T:B}
If Conjecture \ref{C:conj} holds, then $\cR(O_FG)$ is a subgroup of
$\Cl(O_FG)$. Furthermore, if $c \in \cR(O_FG)$, then there exist
infinitely many $[\pi] \in H^1_t(F, G)$ such that $F_{\pi}$ is a field
and $(O_{\pi}) = c$. The extensions $F_{\pi}/F$ may be chosen to have
ramification disjoint from any finite set $S$ of places of $F$. In
particular, the inverse  Galois problem for $G$ admits a positive
solution over $F$.
\end{Theorem}

In order to orient the reader, we shall now briefly indicate the main
ideas involved in the proof of Theorem \ref{T:B}. 

We begin by observing that the long exact sequence of relative
algebraic $K$-theory yields a sequence
\[
K_1(F^cG) \xrightarrow{\partial^1} K_0(O_FG, F^c)
\xrightarrow{\partial^0} \Cl(O_FG) \to 0.
\]
Hence, in order to show that $\cR(O_FG) = \Image(\psi)$ is a subgroup
of $\Cl(O_FG)$, it suffices to show that $\partial^1(K_1(F^cG)) \cdot
\Image(\bpsi) $ is a subgroup of $K_0(O_FG, F^c)$.

To do this, we first show that it suffices to prove that
\[
\lambda(\partial^1(K_1(F^cG))) \cdot \Image(\bpsi^{id})
\]
is a subgroup of $J(K_0(O_FG, F^c))$. Once this is done, it is not
hard to show that $\partial^1(K_1(F^cG)) \cdot \Image(\bpsi)$ is equal
to the kernel of the homomorphism
\[
K_0(O_FG, F^c) \xrightarrow{\lambda} J(K_0(O_FG, F^c)) \to
\frac{J(K_0(O_FG, F^c))}{\lambda[\partial^{1} (K_1(F^cG))] \cdot
\Image(\bpsi^{id})},
\]
and so is indeed a subgroup of $K_0(O_FG, F^c)$ (see Theorem
\ref{T:congp} below). The crux of the proof of the first part of
Theorem \ref{T:B} therefore consists of showing that
$\lambda(\partial^1(K_1(F^cG))) \cdot \Image(\bpsi^{id})$ is a
subgroup of $K_0(O_FG, F^c)$.

This is accomplished as follows.  Write $G(-1)$ for the
group $G$ (viewed as a set) endowed with an action of $\Omega_F$ via
the inverse cyclotomic character. Although in general this is only an
action on $G$ as a set (rather than via automorphisms of $G$), the
induced action on conjugacy classes of $G$ does induce an action on
the centre $Z(F^c[G])$ of the group ring $F^cG$. We write
$Z(F^c[G(-1)])$ to denote $Z(F^c[G])$ endowed with this action. We set
\[
\Lambda(FG):= Z(F^c[G(-1)])^{\Omega_F},
\]
and we write $\Lambda(O_FG)$ for the (unique) $O_F$-maximal order in
$\Lambda(FG)$. For each place $v$ of $F$, we define $\Lambda(F_vG)$
and $\Lambda(O_{F_v}G)$ in an analogous manner. We write
$J(\Lambda(FG))$ for the restricted direct product over all places of
$F$ of the groups $\Lambda(F_vG)^{\times}$ with respect to the
subgroups $\Lambda(O_{F_v}G)^{\times}$.

Let $\Irr(G)$ denote the set of irreducible characters of $G$.
Motivated by an analysis of normal integral basis generators of tame
local extensions, we define a Stickelberger pairing
\[
\langle-,-\rangle_G: \Irr(G) \times  G \to \bQ.
\]
(Loosely speaking, this may be viewed as being a monodromy-type
pairing that encodes ramification data associated to tame extensions
of local fields in a uniform manner (cf. Definition \ref{D:modstick}
below).) We then use this pairing to construct a $K$-theoretic
transpose Stickelberger homomorphism
\[
K\Theta^t: J(\Lambda(FG)) \to J(K_0(O_FG, F^c)).
\]
The homomorphism $K\Theta^t$ is closely related to the map
$\bpsi^{id}$ in the following way. We show that even though the map
$\bpsi_v$ is just a map of pointed sets, the image
$\bpsi_v(H^{1}_{nr}(F_v, G))$ of the restriction of $\bpsi_v$ to
$H^{1}_{nr}(F_v, G)$ is in fact a subgroup of $K_0(O_{F_v}G,
F^{c}_{v})$ for each $v$.  Using an approximation theorem for
$J(\Lambda(FG))$, we show further that, for a suitable choice of
auxiliary ideal $\fa$ of $O_F$, the homomorphism
$K\Theta^t$ may be used to construct a homomorphism
\[
\Theta^{t}_{\fa}: \nCl(\Lambda(O_FG)) \to
\frac{J(K_0(O_FG, F^c))}{\lambda[\partial^1(K_1(F^cG))] \cdot
  \prod_{v} \bpsi_v(H^{1}_{nr}(F_v, G))},
\]
where $\nCl(\Lambda(O_FG))$ is a certain finite quotient of
$J(\Lambda(FG))$. We prove that
\[
\Image(\Theta^{t}_{\fa}) = \Image(\ov{\bpsi^{id}}),
\]
where $\ov{\bpsi^{id}}$ denotes the composition of $\bpsi^{id}$ with
the obvious quotient map
\[
J(K_0(O_FG, F^c)) \to \frac{J(K_0(O_FG, F^c))}{\lambda[
  \partial^1(K_1(F^cG))] \cdot \prod_{v} \bpsi_v(H^{1}_{nr}(F_v, G))}.
\]
We then show that this in turn implies that
\begin{equation} \label{E:idimage}
\lambda(\partial^1(K_1(F^cG))) \cdot \Image(K\Theta^t) = 
\lambda(\partial^1(K_1(F^cG))) \cdot \Image(\bpsi^{id}).
\end{equation}
In particular, this proves that the right-hand side of
\eqref{E:idimage} is a subgroup of $J(K_0(O_FG, F^c))$, as claimed.
This completes our outline of the proof of the first part of Theorem
\ref{T:B}.

The strategy of the proof of the second part of Theorem \ref{T:B} may
be very roughly described as follows. Suppose that $x \in
\lambda^{-1}(\Image(\bpsi^{id}))$.  By using the map $K\Theta^t$
together with a suitable approximation theorem on $J(K_0(O_FG, F^c))$,
we show that there are infinitely many $y \in
\lambda^{-1}(\Image(\bpsi^{id}))$ such that (i) $\partial^0(y) =
\partial^0(x)$, and (ii) each $y$ corresponds via Conjecture
\ref{C:conj} to an element $[\pi_y] \in H^1_t(F, G)$ which is ramified
(away from $S$) in such a way that $\pi_y \in \Hom(\Omega_F, G)$ is
forced to be surjective. This in turn implies that $F_{\pi_y}$ is a
field (rather than just a Galois algebra), and so the inverse Galois
problem for $G$ admits a positive solution over $F$.
\smallskip

Let us now turn to our results concerning the validity of Conjecture
\ref{C:conj}.
\smallskip

When $G$ is abelian, we obtain the following refinement of
\cite[Theorem 6.7]{Mc1} (see Theorem \ref{T:ab} below):

\begin{Theorem} \label{T:C}
Conjecture \ref{C:conj} is true if $G$ is abelian.
\end{Theorem}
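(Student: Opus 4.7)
My plan is to prove the asserted identity $\lambda(\bcR(O_FG)) = \Image(\lambda) \cap \Image(\bpsi^{id})$ as two separate inclusions. The inclusion $\lambda(\bcR(O_FG)) \subseteq \Image(\lambda) \cap \Image(\bpsi^{id})$ should hold for any finite group $G$, and I would prove it by a direct diagram chase: if $c = \bpsi([\pi])$ for some tame $G$-extension $F_\pi/F$, then the diagonal idele $[\pi^{id}] \in J(H^1_t(F,G))$ attached to $[\pi]$ satisfies $\bpsi^{id}([\pi^{id}]) = \lambda(c)$ by local--global compatibility of the resolvend constructions from Section \ref{S:galres}. This amounts to unwinding Definition \ref{D:psid} and comparing it with the construction of $[O_\pi, O_FG; \br_G]$ place by place.

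For the reverse inclusion, which carries the real content, I would take an idele $[\pi^{id}] = ([\pi_v])_v \in J(H^1_t(F, G))$ and $x \in K_0(O_FG, F^c)$ satisfying $\lambda(x) = \bpsi^{id}([\pi^{id}])$, and construct a global tame Galois $G$-algebra $F_\rho/F$ such that $\bpsi([\rho]) = x$. The abelian hypothesis enters decisively here: when $G$ is abelian, both $H^1_t(F, G)$ and $J(H^1_t(F, G))$ carry natural group structures coming from multiplication of cocycles, and the idelic framework of \cite[Theorem 6.7]{Mc1} becomes directly applicable. That theorem characterises $\cR(O_FG)$ as the kernel of a Stickelberger homomorphism on $\Cl(O_FG)$, but its proof actually produces realising extensions from idelic input data; I would refine this production so that it outputs the finer $K$-theoretic class $[O_\rho, O_FG; \br_G]$ rather than merely the ideal-class invariant $(O_\rho)$.

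The principal obstacle will be extracting a genuine global extension from the idelic datum $[\pi^{id}]$. Following \cite{Mc1}, the strategy is to first invoke weak approximation to replace $[\pi^{id}]$ by an idele ramified at only finitely many places without altering $\bpsi^{id}([\pi^{id}])$; next to apply a Chebotarev density argument supplying auxiliary primes whose Frobenius elements realise the prescribed local cocycle values; and finally to assemble a genuine global tame abelian extension $F_\rho/F$ by class field theory. Once $F_\rho$ is constructed, the equality $\bpsi^{id}([\rho^{id}]) = \bpsi^{id}([\pi^{id}]) = \lambda(x)$ combined with the injectivity of $\lambda$ (Proposition \ref{P:locinj}) forces $\bpsi([\rho]) = x$, as desired. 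The novelty of the argument therefore lies not in the construction of $F_\rho$, which largely parallels \cite{Mc1}, but in verifying that upgrading from $\Cl(O_FG)$ to the finer invariant $K_0(O_FG, F^c)$ introduces no new cocycle obstructions at the ramified places; this verification is where I expect the bulk of the bookkeeping to sit.
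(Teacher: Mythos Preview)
Your plan takes a much longer route than the paper does, and in doing so misses the point of why the abelian case is easy.  The paper's proof is only a few lines: choose a representative $[(x_v)_v, x_\infty]$ of $x$ using the idelic description of $K_0(O_FG, F^c)$ (Theorem~\ref{T:kdes}).  When $G$ is abelian the reduced norm gives an isomorphism $K_1(F^cG) \simeq (F^cG)^\times$, so $x_\infty$ is an honest element of $(F^cG)^\times$, not merely a determinant.  The locally cohomological hypothesis says that $x_v \cdot \loc_v(x_\infty) \in H_t(F_vG)$ for every finite $v$; since each $x_v$ is $\Omega_{F_v}$-fixed, this forces $x_\infty$ itself to lie in $H_t(FG)$.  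Thus $x_\infty$ is already the resolvend of a global tame $G$-extension $F_\pi/F$, and $x = \bpsi([\pi])$ follows directly.  No weak approximation, Chebotarev, or class field theory is needed: the global extension is sitting inside the representative of $x$ from the start.

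Your proposed construction---building $F_\rho$ from the local data $([\pi_v])_v$ via the machinery of \cite{Mc1}---is essentially the strategy the paper deploys for \emph{nilpotent} $G$ in Section~\ref{S:nil}, where the simpler argument above is unavailable because $K_1(F^cG)$ no longer coincides with $(F^cG)^\times$.  Even there, the argument requires the extra hypotheses $(|G^{ab}|, h_F)=1$ and (for even $|G|$) that $F$ be totally imaginary, precisely in order to kill the unramified discrepancy between the constructed $\Pi(x)$ and the target $x$ (see Proposition~\ref{P:unram}).  Your sketch glosses over this: you assert $\bpsi^{id}([\rho^{id}]) = \bpsi^{id}([\pi^{id}])$ as an exact equality in $J(K_0(O_FG, F^c))$, but the methods of \cite{Mc1} only control $\rho$ up to unramified twists, so matching at \emph{all} places (not just the ramified ones) would require additional argument that you have not supplied.
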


By combining our methods with work of Neukirch, we are able to
establish a variant of Conjecture \ref{C:conj} for a large class of
soluble groups, including all groups of odd order (see Theorems
\ref{T:domco} and \ref{T:rodd} below). We thereby obtain the following
result, which may be viewed as being a partial
analogue of a classical theorem of Shafarevich (see \cite{S}) on the
inverse Galois problem for soluble groups in the context of arithmetic
Galois module theory. (See Theorem \ref{T:odd} of the main text.)

\begin{Theorem} \label{T:D}
Suppose that $G$ is of odd order and that $(|G|, h_F)=1$, where
$h_F$ denotes the class number of $F$. Suppose also that $F$ contains
no non-trivial $|G|$-th roots of unity. Then $\cR(O_FG)$ is a subgroup
of $\Cl(O_FG)$.  If $c \in \cR(O_FG)$, then there exist infinitely
many $[\pi] \in H^1_t(F, G)$ such that $F_{\pi}$ is a field and
$(O_{\pi}) = c$. The extensions $F_{\pi}/F$ may be chosen to have
ramification disjoint from any finite set $S$ of places of $F$.
\end{Theorem}

While it is perhaps conceivable that it might be possible to remove
the hypothesis $(|G|, h_F) = 1$ of Theorem \ref{T:D} using methods
similar to those of the present paper (although we do not as yet know
how to do this), the same probably cannot be said of the condition
concerning the number of roots of unity in $F$. This latter hypothesis
is forced upon us because our proof makes crucial use of a lifting
theorem of Neukirch (see Section \ref{S:neu}) where such hypotheses
are unavoidable (cf. the last paragraph of the Introduction of
\cite{Neu}). It would be interesting to determine whether or not the
methods of \cite{S} can be used to prove a result similar to Theorem
\ref{T:D} for all soluble groups.

The results and techniques introduced in this paper suggest a number
of different avenues of further investigation. For example, our
methods may also be applied in the context of the relative Galois
module structure of the square root of the inverse different as
studied by C. Tsang (see \cite{Ts16}, \cite{Ts17}), and it seems
reasonable to expect that an analogue of Theorem \ref{T:D} holds in
this setting. Applying the methods of \cite{A12} to the study of
counting and equidistribution problems involving cohomological classes
in relative algebraic $K$-groups should lead to new results concerning
similar problems for number fields, generalising certain aspects of
e.g.  \cite{W89} and \cite{Malle}. Our techniques may also be applied
in the setting of global function fields (see \cite{AB} and
\cite{AB01}), and it would be of interest to further investigate the
connection between the approach adopted here and that taken in
e.g. \cite{Ch94} (cf. for example, \cite[Section 4]{AB}).

Here is an outline of the rest of this paper. In Section
\ref{S:local}, we explain a hitherto unpublished result of the
second-named author that describes how resolvends of normal integral
bases of tamely ramified extensions of non-archimedean local fields
admit certain \textit{Stickelberger factorisations} (see Definition
\ref{D:stickfac}); this is a non-abelian analogue of a version of
Stickelberger's factorisation of abelian Gauss sums. A somewhat
analogous (but much simpler) framework over $\bR$ is described in
Section \ref{S:local2}.

In Section \ref{S:stickpair}, we recall the definition and properties
of the Stickelberger pairing. We also give a new character-theoretic
description of this pairing (see Proposition \ref{P:stickpair}) as
well as an application of this description (see Corollary \ref{C:stick}).

We construct a $K$-theoretic version of the transpose Stickelberger
homomorphism in Section \ref{S:stickhom}, and we also briefly describe
an alternative approach to defining the Stickelberger pairing and
establishing its basic properties. In Section \ref{S:modray} we
construct transpose Stickelberger homomorphisms $\Theta^{t}_{\fa}$ on
modified narrow ray class groups $\nCl(\Lambda(O_FG))$. These are used
in Section \ref{S:idgp} to prove Theorem \ref{T:idgp}, thereby
completing the proof of the first part of Theorem \ref{T:B}.

In Section \ref{S:ker} we prove Proposition \ref{P:A}, and we explain
how a weaker form of Conjecture \ref{C:conj} implies that every
realisable class in $\Cl(O_FG)$ may be realised (in infinitely many
ways) by rings of integers of tame field (and not merely Galois
algebra) $G$-extensions of $F$. This proves the second part of Theorem
\ref{T:B}.

We give a proof of Theorem \ref{T:C} in Section \ref{S:ab}. In Section
\ref{S:neu}, we describe work of Neukirch on the solution to an
embedding problem that is required for the proof of Theorem
\ref{T:D}. This proof is completed in Section \ref{S:odd} via showing
that a suitable variant of Conjecture \ref{C:conj} holds for a large
class of soluble groups (see Definition \ref{D:R} and Theorems
\ref{T:rgreat} and \ref{T:domco}).

We are very grateful indeed to Andrea Siviero for his extremely
detailed comments on an earlier draft of this paper, and to Ruth
Sergel for her most perceptive remarks at a critical stage of this
project. We heartily thank Nigel Byott and Cindy Tsang for the many
very helpful comments, questions and corrections that we received from
them, and Bob Guralnick for a very helpful remark concerning the proof
of Corollary \ref{C:stick} below. We are also extremely grateful to
the anonymous referees whose very careful reading of the manuscript
led us to correct and considerably strengthen our original results,
and to significantly improve our exposition.
\medskip

\noindent{}{\bf Notation and conventions.} 

For any field $L$, we write $L^c$ for an algebraic closure of $L$, and
we set $\Omega_L:= \Gal(L^c/L)$. If $L$ is a number field or a
non-archimedean local field (by which we shall always mean a finite
extension of $\bQ_p$ for some prime $p$), then $O_L$ denotes the ring
of integers of $L$. If $L$ is an archimedean local field, then we
adopt the usual convention of setting $O_L = L$.
\smallskip

Throughout this paper, $F$ will denote a number field. For each place
$v$ of $F$, we fix an embedding $F^c \to F_{v}^{c}$, and we view
$\Omega_{F_v}$ as being a subgroup of $\Omega_F$ via this choice of
embedding. We write $I_v$ for the inertia subgroup of $\Omega_{F_v}$
when $v$ is finite.

The symbol $G$ will always denote a finite group upon which $\Omega_F$
acts trivially. If $H$ is any finite group, we write $\Irr(H)$ for the
set of irreducible $F^c$-valued characters of $H$ and $R_H$ for the
corresponding ring of virtual characters. We write $\1_H$ (or
simply $\1$ if there is no danger of confusion) for the trivial
character in $R_H$. If $h \in H$, then we write $c(h)$ for the
conjugacy class of $h$ in $H$ and $\cC(H)$ for the set of conjugacy
classes of $H$. We denote the derived subgroup of $H$ by $H'$.
\smallskip

If $L$ is a number field or a local field, and $\Gamma$ is any group
upon which $\Omega_L$ acts continuously, we identify $\Gamma$-torsors
over $L$ (as well as their associated algebras, which are Hopf-Galois
extensions associated to $A_{\Gamma}:= (L^c\Gamma)^{\Omega_{L}}$) with
elements of the set $Z^1(\Omega_L, \Gamma)$ of $\Gamma$-valued
continuous $1$-cocycles of $\Omega_L$ (see \cite[I.5.2]{Se} and
Section \ref{S:galres} below). If $\pi \in Z^1(\Omega_L, \Gamma)$,
then we write $L_\pi/L$ for the corresponding Hopf-Galois extension of
$L$, and $O_\pi$ for the integral closure of $O_L$ in $L_\pi$.  (Thus
$O_{\pi} = L_{\pi}$ if $L$ is an archimedean local field.)  Each such
$L_{\pi}$ is a principal homogeneous space (p.h.s.) of the Hopf
algebra $\Map_{\Omega_L}(\Gamma, L^c)$ of $\Omega_L$-equivariant maps
from $\Gamma$ to $L^c$. It may be shown that if $\pi_1, \pi_2 \in
Z^1(\Omega_L,\Gamma)$, then $L_{\pi_1} \simeq L_{\pi_2}$ if and only
if $\pi_1$ and $\pi_2$ differ by a coboundary. The set of isomorphism
classes of $\Gamma$-torsors over $L$ may be identified with the
pointed cohomology set $H^1(L,\Gamma):=H^1(\Omega_L,\Gamma)$.  We
write $[\pi] \in H^1(L,\Gamma)$ for the class of $L_{\pi}$ in
$H^1(L,\Gamma)$. If $L$ is a number field or a non-archimedean local
field we write $H^1_t(L,\Gamma)$ for the subset of $H^1(L,\Gamma)$
consisting of those $[\pi] \in H^1(L,\Gamma)$ for which $L_{\pi}/L$ is
at most tamely ramified. If $L$ is an archimedean local field, we set
$H^1_t(L,G) = H^1(L, G)$. We denote the subset of $H^1_t(L,\Gamma)$
consisting of those $[\pi] \in H^1_t(L,\Gamma)$ for which $L_{\pi}/L$
is unramified at all (including infinite) places of $L$ by
$H^{1}_{nr}(L,\Gamma)$. (So, with this convention, if $L$ is an
archimedean local field, we have $H^{1}_{nr}(L, \Gamma) = 0$.) If $L$
is a number field, we write $H^{1}_{fnr}(F, \Gamma)$ for the subset of
$H^1_t(F, \Gamma)$ consisting of those $[\pi] \in H^1_t(F, \Gamma)$
for which $L_{\pi}/L$ is unramified at all finite places of $L$.
\smallskip

If $A$ is any algebra, we write $Z(A)$ for the centre of $A$. If $A$
is semisimple, we write
\[
\nrd: A^{\times} \to Z(A)^{\times},\quad \nrd: K_1(A) \to
Z(A)^{\times}
\]
for the reduced norm maps on $A^{\times}$ and $K_1(A)$ respectively
(cf. \cite[Chapter II, \S1]{Fr}). If $A$ is an $R$-algebra for some
ring $R$, and $R \to R_1$ is an extension of $R$, we write $A_{R_1}:=
A \otimes_{R} R_1$ to denote extension of scalars from $R$ to $R_1$.
\smallskip

If $S_1$ and $S_2$ are sets, we sometimes use the notation $S_1
\xrightarrow{\epi} S_2$ to denote a surjective map from $S_1$ to
$S_2$.


\section{Principal homogeneous spaces and resolvends} \label{S:galres}

In this section we shall describe some basic facts concerning
principal homogeneous spaces and resolvends. 

Throughout this section, the symbol $L$ denotes either a number field
or a local field.

\subsection{Principal homogeneous spaces} \label{SS:galal}
\cite[Section 1]{Mc1}, \cite[Section 1]{B}. 
Let $\Gamma$ be any finite group upon which $\Omega_L$ acts
continuously on the left, and write $Z^1(\Omega_L, \Gamma)$ for the
set of $\Gamma$-valued continuous $\Omega_L$ $1$-cocycles. If $\pi \in
Z^1(\Omega_L,\Gamma)$, then we write $^{\pi} \Gamma$ for the set
$\Gamma$ endowed with the following modified action of $\Omega_L$: if
\[
\Gamma \to {^{\pi} \Gamma}; \quad \gamma \mapsto \ov{\gamma}
\]
is the identity map on the underlying sets, then
\[
\ov{\gamma}^{\omega} = \ov{\pi(\omega) \cdot \gamma^{\omega}}
\]
for each $\gamma \in \Gamma$ and $\omega \in \Omega_L$. The group
$\Gamma$ acts on $^{\pi} \Gamma$ via right multiplication.

We define an associated $L$-algebra $L_\pi$ by
\[
L_\pi := \Map_{\Omega_L}(^\pi \Gamma, L^c);
\]
this is the algebra of $L^c$-valued functions on $^\pi \Gamma$ that
are fixed under the action of $\Omega_L$. The Hopf algebra
\[
A = A_L:=(L^c \Gamma)^{\Omega_L}
\]
acts on $L_\pi$ via the rule
\[
(\alpha \cdot a)(\gamma) = \sum_{g \in \Gamma} \alpha_g \cdot a (\gamma \cdot g)
\]
for all $\gamma \in \Gamma$ and $\alpha = \sum_{g \in \Gamma} \alpha_g
\cdot g \in A$. The algebra $L_\pi$ is a principal homogeneous space
(p.h.s. for short) of the Hopf algebra
\begin{equation} \label{E:Bdef}
B := \Map_{\Omega_L}(\Gamma, L^c).
\end{equation}
It may be shown that every p.h.s. of $B$ is isomorphic to an algebra
of the form $L_\pi$ for some $\pi$, and so every such p.h.s. may be
viewed as being a subset of the $L^c$-algebra $\Map(\Gamma, L^c)$. It
is easy to check that
\[
L_\pi \otimes_L L^c = L^c \Gamma \cdot \ell_{\Gamma},
\]
where $\ell_\Gamma \in \Map(\Gamma, L^c)$ is defined by
\[
\ell_{\Gamma}(\gamma) =
\begin{cases}
1   &\text{if $\gamma = 1$;} \\
0   &\text{otherwise.}
\end{cases}
\]

This implies that $L_\pi$ is a free, rank one $A$-module.

The Wedderburn decomposition of $L_\pi$ may be described as
follows. For any $\ov{\gamma} \in {^{\pi} \Gamma}$, write
$\Stab(\ov{\gamma})$ for the stabiliser of $\ov{\gamma}$ in
$\Omega_L$, and set 
\[
L(\ov{\gamma}):= (L^{c})^{\Stab(\ov{\gamma})}.
\]
Then
\[
L_\pi \simeq \prod_{\Omega_L \backslash ^{\pi} \Gamma} L(\ov{\gamma}),
\]
where $\Omega_L \backslash ^{\pi} \Gamma$ denotes the set of
$\Omega_L$-orbits of $^{\pi} \Gamma$, and the product is taken over a
set of orbit representatives. In general, the field $L(\ov{\gamma})$
is not normal over $L$. However, if $\Omega_L$ acts trivially on
$\Gamma$, then $Z^1(\Omega_L, \Gamma) = \Hom(\Omega_L, \Gamma)$, and
for each $\ov{\gamma} \in ^{\pi}\Gamma$, we have
\begin{equation}  \label{E:weddcomp}
L(\ov{\gamma}) = (L^c)^{\Ker(\pi)}=: L^{\pi},
\end{equation}
with $\Gal(L^{\pi}/L) \simeq \pi(\Omega_L)$. In this case, we have
that
\begin{equation} \label{E:weddiso}
L_\pi \simeq \prod_{\Gamma /\pi(\Omega_L)} L^{\pi},
\end{equation}
and this isomorphism depends only upon the choice of a transversal of
$\pi(\Omega_L)$ in $\Gamma$. 

\begin{remark} \label{R:galalg}
For most of this paper we shall only need to consider the case in
which $\Omega_L$ acts trivially on $\Gamma$; in this situation $A =
L\Gamma$, and $L_{\pi}$ is a $\Gamma$-Galois $L$-algebra. A notable
exception to this will occur in Section \ref{S:local}, when we take
$L$ to be a non-archimedean local field, and we construct a canonical
subextension of a tame extension $L_{\pi}/L$ (see Definitions
\ref{D:gnr} and \ref{D:gnr1}). This canonical sub-extension is
complementary to the maximal unramified sub-extension of $L_{\pi}/L$,
and is not usually a Galois algebra extension of $L$. It is however, a
p.h.s. of a Hopf algebra of the form \eqref{E:Bdef} associated to a
certain group $\Gamma$ equipped (as a set) with a non-trivial
$\Omega_L$-action.  \qed
\end{remark}

\subsection{Resolvends} \label{SS:resolvends}
\cite[Section 1]{Mc1}, \cite[Section 2]{B}. 

Since every p.h.s. of $B$ may be viewed as being a subset of
$\Map(\Gamma, L^c)$, it is natural to consider the Fourier transforms
of elements of $\Map(\Gamma, L^c)$. These arise via the
\textit{resolvend map}
\[
\br_\Gamma:  \Map(\Gamma, L^c) \to L^c \Gamma; \qquad a \mapsto
\sum_{s \in \Gamma} a(s) s^{-1}.
\]
The map $\br_\Gamma$ is an isomorphism of left $L^c\Gamma$-modules,
but not of algebras, because it does not preserve multiplication. It
is easy to show that for any $a \in \Map(\Gamma, L^c)$, we have that
$a \in L_\pi$ if and only if $\br_\Gamma(a)^{\omega} = \br_\Gamma(a)
\cdot \pi(\omega)$ for all $\omega \in \Omega_L$. It may also be shown
that an element $a \in L_\pi$ generates $L_\pi$ as an $A$-module if
and only if $\br_\Gamma(a) \in (L^c \Gamma)^{\times}$. Two elements
$a_1, a_2 \in \Map(\Gamma, L^c)$ with $\br_\Gamma(a_1),
\br_\Gamma(a_2) \in (L^c\Gamma)^{\times}$ generate the same p.h.s.  as
an $A$-module if and only if $\br_\Gamma(a_1) = b \cdot
\br_\Gamma(a_2)$ for some $b \in A^{\times}$. If $a$ is any generator
of $L_\pi$ as an $A$-module, then a $\Gamma$-valued $\Omega_L$
$1$-cocycle that represents the class $[\pi]$ of $\pi$ in the pointed
cohomology set $H^1(L,\Gamma)$ is given by
\[
\omega \mapsto \br_{\Gamma}(a)^{-1} \cdot \br_{\Gamma}(a)^{\omega}.
\]

We define pointed sets (where in each case the distinguished element
is afforded by $1 \in A^{\times}_{L^c} = (L^c\Gamma)^{\times}$):
\begin{align*}  
&H(A):= \left\{ \alpha \in A^{\times}_{L^c}:
  \alpha^{-1} \cdot \alpha^{\omega} \in \Gamma \quad \forall \omega \in \Omega_L
  \right\}; \\
&\cH(A):= H(A)/\Gamma = \{ \alpha \cdot \Gamma: \alpha \in H(A) \},
\end{align*}
and we write $r_\Gamma(a) \in \cH(A)$ for the image in $\cH(A)$ of
$\br_\Gamma(a) \in H(A)$. The element $r_\Gamma(a)$ is referred to as
the \textit{reduced resolvend} of $a$. If $\fA$ is any $O_L$-order in
$A$, then we define $H(\fA)$ and $\cH(\fA)$ in a similar manner. Hence
we have
\[
H(\fA) = \fA_{O_{L^c}} \cap H(A), \quad \cH(\fA) = H(\fA)/\Gamma.
\]

Write $L^t$ for the maximal, tamely ramified extension of $L$.  We
set
\begin{align*}
&H_t(A):= \left\{ \alpha \in H(A):
  \alpha^{\omega} = \alpha \quad \forall \omega \in \Omega_{L^t}
  \right\}; \\
&\cH_t(A):= H_t(A)/\Gamma = \{ \alpha \cdot \Gamma: \alpha
  \in H_t(A) \}, 
\end{align*}
and we define $H_t(\fA)$ and $\cH_t(\fA)$ analogously for any
$O_{L}$-order $\fA$ in $A$.
\medskip

We shall now give a characterisation of the set $H(A)$  that avoids any
explicit mention of Galois action. This is a non-abelian version of a
description of $H(A)$ in terms of primitive elements of quotients of
groups of units in Hopf algebras in the abelian case (see
\cite[Theorem 6.4]{AB}).

In order to do this, we first note that there are
$\Omega_L$-equivariant homomorphisms of algebras
\[
\Delta, i_1, i_2: A_{L^c} \to A_{L^c} \otimes_{L^c} A_{L^c}
\]
induced by the maps
\[
\Delta(\gamma) = \gamma \otimes \gamma, \quad i_1(\gamma) = \gamma
\otimes 1, \quad i_2(\gamma) = 1 
\otimes \gamma
\]
for $\gamma \in \Gamma$.

We define a map of pointed sets
\[
\sP: A_{L^c}^{\times} \to (A_{L^c} \otimes_{L^c}
A_{L^c})^{\times}; \quad x \mapsto 
\Delta(x) \cdot [i_1(x) \cdot i_2(x)]^{-1}.
\]

It is easy to verify that
\[
\sP(x_1 \cdot x_2) = \Delta(x_1) \cdot \sP(x_2) \cdot [i_1(x_1) \cdot
  i_2(x_1)]^{-1}.
\]
As $\sP(\gamma) = 1$ for each $\gamma \in \Gamma$, it follows that
$\sP$ induces a map of pointed sets (which we denote by the same
symbol):
\[
\sP: A_{L^c}^{\times}/\Gamma \to (A_{L^c} \otimes_{L^c}
A_{L^c})^{\times}. 
\]

\begin{theorem} \label{T:prim}
Let $x \in A_{L^c}^{\times}$. Then $x \in H(A)$ if and only if
$\sP(x) \in (A \otimes_L A)^{\times}$.
\end{theorem}

\begin{proof}
Suppose that $x \in H(A)$. Then if $\omega \in \Omega_L$, we have
\[
x^{\omega} = x \cdot \gamma_{\omega}
\]
for some $\gamma_{\omega} \in \Gamma$. Hence 
\begin{align*}
[\Delta(x)(i_1(x)i_2(x))^{-1}]^{\omega} &= \Delta(x)(\gamma_{\omega}
  \otimes \gamma_{\omega}) [i_1(x) (\gamma_{\omega} \otimes 1) i_2(x) (1 \otimes
    \gamma_{\omega})]^{-1} \\
&= \Delta(x) (\gamma_{\omega} \otimes \gamma_{\omega})(1 \otimes
  \gamma_{\omega})^{-1} i_2(x)^{-1} (\gamma_{\omega} \otimes 1)^{-1} i_1(x)^{-1}
  \\
&=\Delta(x)(\gamma_{\omega} \otimes \gamma_{\omega})(1 \otimes
  \gamma_{\omega})^{-1}(\gamma_{\omega} \otimes 1)^{-1} i_2(x)^{-1} i_1(x)^{-1}
  \\
&= \Delta(x)[i_1(x)i_2(x)]^{-1}.
\end{align*}
This shows that
\[
\sP(x) \in [(A_{L^c} \otimes_{L^c}
  A_{L^c})^{\times}]^{\Omega_L} = (A \otimes_L A)^{\times}.
\]

Suppose conversely that $\sP(x) \in (A \otimes_L A)^{\times}$, and
that $x^{\omega} = x \cdot u_{\omega}$ for each $\omega \in \Omega_L$.
We wish to show that $u_{\omega} \in \Gamma$.  As the maps $\Delta$,
$i_1$, and $i_2$ are $\Omega_L$-equivariant, we have that
\[
\Delta(x)^{\omega} = \Delta(x) \cdot \Delta(u_{\omega}),\quad
i_1(x)^{\omega} = i_1(x) \cdot i_1(u_{\omega}), \quad i_2(x)^{\omega}
= i_2(x) \cdot i_2(u_{\omega}),
\]
and a straightforward computation shows that
\[
\sP(x)^{\omega} = \Delta(x) \cdot \cP(u_{\omega}) \cdot [i_1(x) \cdot
  i_2(x)]^{-1}.
\]
As $\sP(x) = \sP(x)^{\omega}$, this implies that $\sP(u_{\omega}) =
1$, i.e. that
\[
\Delta(u_{\omega}) = i_1(u_{\omega}) \cdot i_2(u_{\omega}).
\]
It now follows that $u_{\omega} \in \Gamma$ via an argument identical to
that given in \cite[Theorem 6.4]{AB}.
\end{proof}

Let $F$ be a number field. Our next result shows that the pointed set
$H(A_F)$ of resolvends satisfies a Hasse principle.

\begin{proposition}  \label{P:reshasse}
Let $F$ be a number field, and suppose that $x \in
(F^c\Gamma)^{\times}$. Then $x \in H(A_F)$ if and only if $\loc_v(x)
\in H(A_{F_v})$ for every finite place $v$ of $F$.
\end{proposition}

\begin{proof}
We first observe that the map $\sP$ commutes with localisation,
i.e. for each finite place $v$ of $F$, we have
\begin{equation} \label{E:primloc}
\loc_v(\sP(x)) = \sP(\loc_v(x))
\end{equation}
for all $x \in (F^c\Gamma)^{\times}$. Hence we have
\begin{align*}
x \in H(A_F) &\iff \sP(x) \in (A_F \otimes_F A_F)^{\times} \quad \text{(from
Theorem \ref{T:prim});} \\
&\iff \loc_v(\sP(x)) \in (A_{F_v} \otimes_{F_v} A_{F_v})^{\times} \quad
\text{for each finite $v$;} \\
&\iff \sP(\loc_v(x)) \in (A_{F_v} \otimes_{F_v} A_{F_v})^{\times} \quad
\text{for each finite $v$ (from \eqref{E:primloc});} \\
&\iff \loc_v(x) \in H(A_{F_v}) \quad \text{for each finite $v$ (from Theorem
  \ref{T:prim})}.
\end{align*}

\end{proof}

\begin{remark}  \label{R:altreshasse}
It is also possible to give a proof of Proposition \ref{P:reshasse}
directly from the definition of $H(A_F)$. The standard such proof
that was known to the authors is valid only for abelian groups
$\Gamma$; we are grateful to an anonymous referee for explaining how
this proof may be modified so as to hold for arbitrary finite groups.

Suppose that $x \in A_{F^c}^{\times}$ is such that, for each finite
place $v$ of $F$, we have $\loc_v(x) \in H(A_{F_v})$.  We wish to
show that $x \in H(A_F)$.

Let $E/F$ be any finite Galois extension such that $\Omega_E$ fixes
$x$. Then the action of $\Omega_F$ on $x$ factors through the action
of the finite group $D:= \Gal(E/F)$. Hence, to prove the desired
result, it suffices to show that for any $\delta \in D$, we have
$x^{\delta} = x \cdot \gamma_{\delta}$, with $\gamma_{\delta} \in
\Gamma$. 

Let $\cG_F$ denote the subgroup of $\Omega_F$ generated by the
subgroups $\Omega_{F_v}$ as $v$ runs over the finite places of $F$.
As each element of $\Omega_F$ is conjugate to an element of
$\Omega_{F_v}$ for some $v$, it follows via the Chebotarev density
theorem that the image $\ov{\cG}_F$ of $\cG_F$ in $D$ has non-trivial
intersection with every conjugacy class of $D$.  A lemma of Jordan now
implies that $\ov{\cG}_F$ must be equal to the whole of $D$ (see
\cite[p. 435, Theorem 4']{Se2003}).  The result we seek now follows at
once. \qed
\end{remark}


\section{Resolvends and cohomology} \label{S:resco}

Recall that $F$ is a number field and $G$ is a finite group upon which
$\Omega_F$ acts trivially. In this section, we explain, following
\cite[\S2]{Mc1}, how resolvends may be used to compute discriminants
of rings of integers of $G$-Galois extensions of $F$, and to describe
certain Galois cohomology groups.

For each $[\pi] \in H^1(F,G)$, the standard trace map
\[
\Tr: \Map(G,F^c) \to F^c
\]
induces a trace map 
\[
\Tr: F_{\pi} \to F
\]
via restriction. This in turn yields an associated, non-degenerate
bilinear form $(a,b) \mapsto \Tr(ab)$ on $F_\pi$. If $M$ is any full
$O_F$-lattice in $F_\pi$, then we set
\[
M^*:= \{ b \in F_\pi | \Tr(b \cdot M) \subseteq O_F \}
\]
and
\[
\disc(O_\pi /O_F) := [ O_{\pi}^{*}: O_{\pi}]_{O_F},
\]
where the symbol $[-:-]_{O_F}$ denotes the $O_F$-module index. We see
from the isomorphism \eqref{E:weddiso} that we have
\[
\disc(O_\pi/O_F) = \disc(O_{F^{\pi}}/O_F)^{[G:\pi(\Omega_{F})]},
\]
where $\disc(O_{F^{\pi}}/O_F)$ denotes the usual discriminant of the
  number field $F^{\pi}$ over $F$, and so it follows that 
\[
\disc(O_{\pi}/O_F) = O_F
\]
if and only if $F_\pi/F$ is unramified at all finite places of $F$.

\begin{definition} We write $[-1]$ for the maps induced on
  $\Map(G,F^c)$ and $F^cG$ by the map $g \mapsto g^{-1}$ on $G$. \qed
\end{definition}

\begin{lemma} \label{L:keycalc}
Suppose that $a, b \in F_\pi$ for some $[\pi] \in H^1(F,G)$. Then
\[
\br_G(a) \cdot \br_G(b)^{[-1]} = \sum_{s \in G} \Tr(a^s b) \cdot
s^{-1} \in FG.
\]
\end{lemma}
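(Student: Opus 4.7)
The statement is purely computational, so the plan is to expand both sides and match coefficients. Concretely, I would proceed as follows.

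First I would write out the left-hand side using the definition of $\br_G$. Since $\br_G(a) = \sum_{s \in G} a(s) s^{-1}$ and applying $[-1]$ to $\br_G(b)$ gives $\br_G(b)^{[-1]} = \sum_{t \in G} b(t) t$, the product expands to
\[
\br_G(a) \cdot \br_G(b)^{[-1]} = \sum_{s,t \in G} a(s) b(t) \, s^{-1} t.
\]
Next I would reindex by collecting terms according to the group element that appears. Setting $u = t^{-1} s$ (so that $s^{-1} t = u^{-1}$ and $s = tu$), the double sum rewrites as
\[
\sum_{u \in G} \Bigl( \sum_{t \in G} a(tu)\, b(t) \Bigr) u^{-1}.
\]

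The second step is to recognize the inner sum as $\Tr(a^u b)$. Here I would invoke the $FG$-module structure on $F_\pi$ recalled in Section \ref{SS:galal}: for $u \in G$ and $a \in F_\pi \subset \Map(G, F^c)$, the action satisfies $(a^u)(\gamma) = a(\gamma u)$. Combined with the definition of the trace as the restriction of the standard sum $\Tr(f) = \sum_{\gamma \in G} f(\gamma)$ from $\Map(G, F^c)$ to $F_\pi$, this gives
\[
\Tr(a^u b) = \sum_{t \in G} (a^u)(t) b(t) = \sum_{t \in G} a(tu)\, b(t),
\]
which matches the inner sum above. This establishes the identity at the level of elements of $F^c G$.

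Finally, I would note that the resulting sum a priori lives in $F^c G$, but each coefficient $\Tr(a^u b)$ belongs to $F$ because $a, b \in F_\pi$ and the restricted trace map takes values in $F$. Hence the expression actually lies in $FG$, giving the final statement. No step here is a real obstacle; the only mild pitfall is keeping the order of multiplication straight and making sure the $G$-action convention on $F_\pi$ (as recorded in Section \ref{SS:galal}) is applied with $(a^u)(\gamma) = a(\gamma u)$ rather than $a(u\gamma)$, which is what makes the reindexing come out correctly.
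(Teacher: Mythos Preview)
Your proof is correct and is precisely the ``straightforward calculation'' the paper alludes to; the paper itself does not spell out the details but simply refers to \cite[(1.6)]{Mc}, so your expansion, reindexing, and identification of the inner sum with $\Tr(a^u b)$ via the module action $(u\cdot a)(\gamma)=a(\gamma u)$ from Section~\ref{SS:galal} is exactly what is intended.
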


\begin{proof} This may be verified via a straightforward calculation
  (see e.g.  \cite[(1.6)]{Mc}, and note that the calculation given there is
  valid for an arbitrary finite group G).
\end{proof}

\begin{corollary} \label{C:dual}
Suppose that $F_{\pi} = FG \cdot a$. Then we have:

(i) $\br_G(a)^{-1} = \br_G(b)^{[-1]}$, where $b \in F_{\pi}$
satisfies $\Tr(a^s b^t) = \delta_{s,t}$.

(ii) $(O_FG \cdot a)^* = O_FG \cdot b$.

(iii) $[(O_FG \cdot a)^*: O_FG \cdot a]_{O_F} = [O_FG: O_FG \cdot
  \br_G(a) \cdot \br_G(a)^{[-1]}]_{O_F}$.

(iv) $\br_G(a) \in (O_{F^c}G)^{\times}$ if and only if $O_{\pi} = O_FG
\cdot a$ and $\disc(O_{\pi}/O_F) = O_F$.

Analogous results hold if $F$ is replaced by $F_v$ for any finite
place $v$ of $F$.
\end{corollary}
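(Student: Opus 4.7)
The plan is to use Lemma \ref{L:keycalc} as the engine driving all four parts, combined with elementary duality of $O_F$-lattices in $F_\pi$ under the trace pairing.

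For (i), since $a$ is a free $FG$-generator of $F_\pi$, the family $\{a^s\}_{s\in G}$ is an $F$-basis of $F_\pi$. Because $G$ acts on $F_\pi$ by $F$-algebra automorphisms preserving $\Tr$, the trace form satisfies $\Tr(xy) = \Tr(x^t y^t)$, so the unique dual basis produced by the non-degenerate trace pairing necessarily has the shape $\{b^t\}_{t\in G}$ for a single element $b \in F_\pi$. Plugging this $b$ into Lemma \ref{L:keycalc} gives $\br_G(a)\br_G(b)^{[-1]} = \sum_s \delta_{s,1}\,s^{-1} = 1$, which is (i). For (ii), expanding an arbitrary $x \in F_\pi$ in the basis $\{b^t\}$ as $x = \sum_t x_t b^t$ and using $\Tr(a^s b^t) = \delta_{s,t}$ yields $\Tr(x\cdot a^s) = x_s$; since $\{a^s\}$ is an $O_F$-basis of $O_FG\cdot a$, the condition $x \in (O_FG\cdot a)^*$ reduces to $x_s \in O_F$ for all $s$, i.e.\ $x \in O_FG\cdot b$.

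For (iii), both $a$ and $b$ are free $FG$-generators of $F_\pi$, so there is a unit $u \in (FG)^\times$ with $a = u\cdot b$; applying the $F^cG$-module isomorphism $\br_G$ gives $\br_G(a) = u\cdot\br_G(b)$, and (i) identifies $u = \br_G(a)\cdot\br_G(a)^{[-1]}$. The $FG$-module isomorphism $FG \to F_\pi$, $\alpha\mapsto\alpha\cdot a$, carries $O_FG$ onto $O_FG\cdot a$ and $O_FG\cdot u^{-1}$ onto $O_FG\cdot b = (O_FG\cdot a)^*$, so $[(O_FG\cdot a)^*:O_FG\cdot a]_{O_F} = [O_FG\cdot u^{-1}:O_FG]_{O_F}$. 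Right multiplication by $u$ is an $O_F$-linear bijection of $FG$ sending $O_FG\cdot u^{-1}$ to $O_FG$ and $O_FG$ to $O_FG\cdot u$, giving $[O_FG\cdot u^{-1}:O_FG]_{O_F} = [O_FG:O_FG\cdot u]_{O_F}$, which is (iii).

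For (iv), $\br_G(a) \in (O_{F^c}G)^\times$ says that both $\br_G(a)$ and $\br_G(a)^{-1} = \br_G(b)^{[-1]}$ lie in $O_{F^c}G$; unpacking the definition of $\br_G$ shows this is equivalent to $a(s), b(s) \in O_{F^c}$ for every $s$, i.e.\ to $a, b \in O_\pi$. Equivalently, $O_FG\cdot a \subseteq O_\pi$ and, via (ii) together with the lattice duality $L^{**} = L$, also $O_\pi^* \subseteq O_FG\cdot a$. Since traces of integral elements are integral, $O_\pi \subseteq O_\pi^*$ automatically, so the chain $O_FG\cdot a \subseteq O_\pi \subseteq O_\pi^* \subseteq O_FG\cdot a$ collapses to equalities, yielding $O_\pi = O_FG\cdot a$ and $\disc(O_\pi/O_F) = [O_\pi^*:O_\pi]_{O_F} = O_F$; the converse is immediate, and the local statement follows by the identical argument with $O_F$ replaced by $O_{F_v}$. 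The only subtle point throughout the proof is the bookkeeping for the involution $[-1]$ against the left $FG$-action on $F_\pi$ and the left-right asymmetry of multiplication in $FG$, but this is essentially already absorbed into Lemma \ref{L:keycalc}, so no serious obstacle remains.
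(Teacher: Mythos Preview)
Your proof is correct and is essentially the standard argument that the paper defers to \cite[2.10 and 2.11]{Mc1}; the paper itself gives no details beyond that citation, and what you have written is precisely the expected unpacking via Lemma~\ref{L:keycalc} and elementary lattice duality. The only place one might want a word more is in (iii), where it is worth recording explicitly that $[-1]$ is an anti-automorphism so that $\br_G(b)^{-1} = \br_G(a)^{[-1]}$ follows cleanly from (i), but you flag this bookkeeping at the end and it causes no trouble.
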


\begin{proof} Exactly as in \cite[2.10 and 2.11]{Mc1}.
\end{proof}

\begin{lemma}  \label{L:H90}
Suppose that $L$ is either a number field or a local field. Then

(i) $H^1(L, (L^cG)^{\times}) = 1$;

(ii) $H^1(L, Z(L^cG)^{\times}) = 1$.

\end{lemma}

\begin{proof}
For each $\chi \in \Irr(G)$, write $d(\chi)$ for the degree of
$\chi$, and $M_{d(\chi)}(L^c)$ for the algebra of $d(\chi) \times
d(\chi)$-matrices over $L^c$. Then the Wedderburn isomorphism of
algebras
\[
L^cG \simeq \bigoplus_{\chi \in \Irr(G)} M_{d(\chi)}(L^c)
\]
yields isomorphisms of groups
\[
(L^cG)^{\times} \simeq \bigoplus_{\chi \in \Irr(G)} \GL_{d(\chi)}(L^c),
\quad Z(L^cG)^{\times} \simeq \bigoplus_{\chi \in \Irr(G)} (L^c)^{\times}.
\]
Let $\chi_1,\ldots, \chi_m \in \Irr(G)$ be a set of representatives of
$\Omega_L \backslash \Irr(G)$. Write $\Stab(\chi_i)$ for the
stabiliser of $\chi_i$ in $\Omega_L$, and set $L[\chi_i]:=
(L^c)^{\Stab(\chi_i)}$. There are isomorphisms of
$\Omega_L$-modules 
\[
(L^cG)^{\times} \simeq \bigoplus_{i=1}^{m}
\Ind_{\Omega_{L[\chi_i]}}^{\Omega_{L}} (\GL_{d(\chi_i)}(L^c)), \quad
Z(L^cG)^{\times} \simeq \bigoplus_{i=1}^{m}
\Ind_{\Omega_{L[\chi_i]}}^{\Omega_{L}} (L^c)^{\times}.
\]
We have
\begin{align*}
H^1(L, (L^cG)^{\times}) &\simeq H^1(L, \bigoplus_{i=1}^{m}
\Ind_{\Omega_{L[\chi_i]}}^{\Omega_{L}} \GL_{d(\chi_i)}(L^c)) \\
&\simeq \bigoplus_{i=1}^{m} H^1(L[\chi_i], \GL_{d(\chi_i)}(L^c)) \\
&= 1,
\end{align*}
where the second isomorphism follows via Shapiro's Lemma and the third
is standard consequence of Hilbert's Theorem 90. This proves (i). The
proof of (ii) is very similar.
\end{proof}

Recall that two pointed sets $S_1$ and $S_2$ are said to be \textit{isomorphic}
if there is a bijection of sets
\[
f: S_1 \to S_2
\]
with $f(x_1) = f(x_2)$, where $x_i$ is the distinguished element of
$S_i$, $(i=1,2)$.

A sequence
\[
\cdots \rightarrow S_{i-1} \xrightarrow{f_{i}} S_i \xrightarrow{f_{i+1}}
S_{i+1} \rightarrow \cdots
\]
of pointed sets is said to be \textit{exact} if there is an equality of sets
\[
\Image(f_{i}) = f_{i+1}^{-1}(x_{i+1}),
\]
where $x_{i+1}$ is the distinguished element of $S_{i+1}$.

\begin{theorem} \label{T:reskey} 
(a) There is an exact sequence of pointed sets
\begin{equation} \label{E:res1}
1 \to G \to (FG)^{\times} \to \cH(FG) \to H^1(F,G) \to 1.
\end{equation}

(b) For each finite place $v$ of $F$, recall that $H^{1}_{nr}(F_v,G)$
denotes the subset of $H^1(F_v,G)$ consisting of those $[\pi_v] \in
H^1(F_v,G)$ for which the associated $G$-Galois extension
$F_{\pi_v}/F_v$ is unramified. Then there is an exact sequence of
pointed sets
\begin{equation} \label{E:res2}
1 \to G \to (O_{F_v}G)^{\times} \to \cH(O_{F_v}G) \to
H^{1}_{nr}(F_v,G) \to 1.
\end{equation}

(c) There are exact sequences of pointed
sets
\begin{equation} \label{E:res3}
1 \to G \to (FG)^{\times} \to \cH_t(FG) \to H_t^1(F,G) \to 1,
\end{equation}
and
\begin{equation} \label{E:res4}
1 \to G \to (F_vG)^{\times} \to \cH_t(F_vG) \to H_t^1(F_v,G) \to 1
\end{equation}
for each place $v$ of $F$.
\end{theorem}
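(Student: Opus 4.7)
All four sequences share the same shape, and I would prove (a) carefully and then indicate the additional input needed for (b) and (c). Throughout, I would exploit the triviality of the $\Omega_F$-action on $G$ to identify each $H^1$ in sight with $G$-conjugacy classes of homomorphisms $\Omega_F \to G$ satisfying an appropriate local ramification condition. The three non-identity maps are: the inclusion $G \hookrightarrow (FG)^\times$; the composite $(FG)^\times \hookrightarrow H(FG) \twoheadrightarrow \cH(FG)$, where the first arrow uses that any $\alpha \in (FG)^\times$ satisfies $\alpha^{-1}\alpha^\omega = 1 \in G$; and the cocycle map $\cH(FG) \to H^1(F,G)$ sending $\bar\alpha$ to the class of $\omega \mapsto \alpha^{-1}\alpha^\omega$. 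For the last to be well-defined one checks that replacing $\alpha$ by $\alpha g$ with $g \in G$ conjugates the cocycle by $g^{-1}$, which is exactly the equivalence identifying cocycles in non-abelian $H^1$ when $\Omega_F$ acts trivially on $G$.

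For (a), exactness at $G$ and at $(FG)^\times$ is immediate from the definitions. For exactness at $\cH(FG)$, the composition vanishes by inspection. Conversely, if $\bar\alpha$ is in the kernel of the cocycle map, then the homomorphism $\omega \mapsto \alpha^{-1}\alpha^\omega$ is $G$-conjugate to the trivial one; since any conjugate of the trivial homomorphism is the trivial homomorphism, the cocycle must itself be trivial, forcing $\alpha^\omega = \alpha$ and hence $\alpha \in (FG)^\times$. For surjectivity onto $H^1(F,G)$, the fact recalled in Section \ref{SS:galal} that every $F_\pi$ is a free rank-one $FG$-module produces a generator $a$; then $\br_G(a) \in (F^c G)^\times$ with $\br_G(a)^{-1}\br_G(a)^\omega = \pi(\omega) \in G$, so $r_G(a) \in \cH(FG)$ maps to $[\pi]$.

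Parts (b) and (c) go through by the same template once the correct integral or tame refinement is in place. Corollary \ref{C:dual}(iv) immediately shows that the image of $\cH(O_{F_v}G) \to H^1(F_v,G)$ lies in $H^1_{nr}(F_v,G)$, since the condition $r_G(a) \in (O^c_{F_v}G)^\times$ forces both $O_{\pi_v} = O_{F_v}G \cdot a$ and $\disc(O_{\pi_v}/O_{F_v}) = O_{F_v}$. Conversely, every unramified $G$-Galois algebra extension admits a normal integral basis, so a generator of $O_{\pi_v}$ over $O_{F_v}G$ furnishes a reduced resolvend in $\cH(O_{F_v}G)$ hitting the given class; exactness at $(O_{F_v}G)^\times$ in (b) follows from $(F_v G)^\times \cap (O^c_{F_v}G)^\times = (O_{F_v}G)^\times$. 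For (c), the local tame version uses Noether's theorem in place of the unramified NIB, while the global tame version needs only the (non-integral) normal basis theorem for $F_\pi$; the condition that the cocycle $\pi$ kill wild inertia at every place translates exactly into $\br_G(a) \in H_t(FG)$.

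The main obstacle, such as it is, lies in the surjectivity portions of (b) and (c): each rests on an existence theorem for normal integral bases, with the tame case being Noether's theorem itself. Modulo these classical inputs, the whole argument is a careful but routine unwinding of the definitions of resolvends, reduced resolvends, and non-abelian $H^1$ with trivial coefficient action.
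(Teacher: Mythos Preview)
Your proof is correct and follows essentially the same approach as the paper: the paper packages the argument as taking non-abelian Galois cohomology of the short exact sequence $1 \to G \to (F^cG)^\times \to (F^cG)^\times/G \to 1$ (and its tame and integral variants), while you unwind this exact sequence by hand, but the content is identical. One small overshoot: in part (c) you invoke Noether's theorem for the local tame sequence, but since \eqref{E:res4} involves $\cH_t(F_vG)$ rather than an integral version, only the ordinary normal basis theorem is needed there (the resolvend of any $F_vG$-generator of a tame $F_{\pi_v}$ automatically has coefficients in $F_v^t$); Noether is genuinely required only for the surjectivity in (b).
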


\begin{proof} When $G$ is abelian, parts (a) and (b) are proved in
\cite[pages 268 and 273]{Mc1} by considering the $\Omega_F$ and
$\Omega_{F_v}$-cohomology of the exact sequences of abelian groups
\begin{equation} \label{E:tors1}
1 \to G \to (F^cG)^{\times} \to (F^cG)^{\times}/G \to 1
\end{equation}
and 
\[
1 \to G \to (O_{F^c_v}G)^{\times} \to (O_{F^c_v}G)^{\times}/G \to 1
\]
respectively.  If $G$ is non-abelian, and these exact sequences are
viewed as exact sequences of pointed sets instead, then a similar
proof of part (a) also holds, as is pointed out in \cite[page
  268]{Mc1}: taking $\Omega_F$-cohomology of the exact sequence
\eqref{E:tors1} of pointed sets yields an exact sequence
\begin{equation} \label{E:tors2}
1 \to G \to (FG)^{\times} \to \cH(FG) \to H^1(F,G) \to H^1(F, (F^cG)^{\times}),
\end{equation}
and since $H^1(F, (F^cG)^{\times}) =1$ (see Lemma \ref{L:H90}(i)), 
\eqref{E:res1} immediately follows.

Alternatively, we could also argue directly (as is done in \cite{Mc1})
that the map $\cH(FG) \to H^1(F,G)$ in \eqref{E:tors2} is
surjective. Let us briefly describe the argument given in
\cite{Mc1}. Suppose that $[\pi] \in H^1(F,G)$, and let $a \in F_{\pi}$
be a normal basis generator of $F_\pi/F$. Set $\alpha = \br_G(a)$;
then the coset $\alpha \cdot G \in \cH(FG)$ lies in the pre-image of
$[\pi]$, and so it follows that \eqref{E:tors2} is indeed surjective
on the right, as claimed.

Part (b) follows from Corollary \ref{C:dual}(iv) (cf. the
proof of (2.12) on \cite[page 273]{Mc1}). 

The proof of (c) is very similar to that of (a). Let $F^t$ and $F_v^t$
denote the maximal tamely ramified extensions of $F$ and $F_v$
respectively, and set $\Omega_F^t:= \Gal(F^t/F)$, $\Omega_{F_v}^{t}:=
\Gal(F_v^t/F_v)$. Then (c) follows via considering the $\Omega_F^t$
and $\Omega_{F_v}^{t}$-cohomology of the exact sequences of pointed
sets
\[
1 \to G \to (F^tG)^{\times} \to (F^tG)^{\times}/G \to 1
\]
and 
\[
1 \to G \to (F_v^tG)^{\times} \to (F_v^tG)^{\times}/G \to 1
\]
respectively, using the direct argument given in \cite[page 268]{Mc1}
that we have described above.
\end{proof}

Suppose that $L$ is a number field or a local field. Recall that
$Z(LG)$ denotes the centre of $LG$. Before stating
our next result, we note that the reduced norm map
\[
\nrd: (LG)^{\times} \to Z(LG)^{\times}
\]
induces an injection $G^{ab} \to Z(LG)^{\times}$. (More explicitly, if
we identify $Z(L^cG)^{\times}$ with $\prod_{\chi \in \Irr(G)}
(L^{c})^{\times}$ via the Wedderburn decomposition of $L^cG$ (cf. the
proof of Lemma \ref{L:H90}), then the injection $G^{ab} \to
Z(L^G)^{\times}$ is induced by the map $G \to Z(L^cG)^{\times}$ given
by $g \mapsto [(\det(\chi))(g)]_{\chi}$, where $\det(\chi)$ is the abelian
character of $G$ defined below in Definition \ref{D:abdet}. See also
\eqref{E:globrag}.) In what follows, we shall identify $G^{ab}$ with
its image in $Z(LG)^{\times}$ under this map. We set
\begin{align*}
&H(Z(LG)):= \left\{ \alpha \in Z(L^cG)^{\times}:
  \alpha^{-1} \cdot \alpha^{\omega} \in G^{ab} \quad \forall \omega \in \Omega_L
  \right\}; \\
&\cH(Z(LG)):= H(Z(LG))/ G^{ab} = 
\{ \alpha \cdot G^{ab}: \alpha \in H(Z(LG)) \}.
\end{align*}
We define $H(Z(\fA))$ and $\cH(Z(\fA))$ analogously for any
$O_L$-order $\fA$ in $LG$.

\begin{proposition} \label{P:abres}
Let $L$ be a number field or a local field. Then there is an exact
sequence of abelian groups:
\begin{equation} \label{E:res5}
1 \to G^{ab} \to Z(LG)^{\times} \to \cH(Z(LG)) \to H^1(L, G^{ab}) \to 1.
\end{equation}

\end{proposition}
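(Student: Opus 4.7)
The plan is to proceed in complete analogy with the proof of Theorem \ref{T:reskey}(a), except that now every term in sight is an abelian group, so one can harvest a genuine long exact sequence in cohomology rather than just the beginning of a cohomology sequence of pointed sets. Concretely, the starting point is the short exact sequence of $\Omega_L$-modules
\[
1 \to G^{ab} \to Z(L^cG)^{\times} \to Z(L^cG)^{\times}/G^{ab} \to 1,
\]
where $G^{ab}$ is embedded in $Z(L^cG)^{\times}$ via the reduced norm (as recalled just before the proposition) and carries the trivial $\Omega_L$-action.

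Taking $\Omega_L$-cohomology produces
\[
1 \to G^{ab} \to Z(LG)^{\times} \to \bigl(Z(L^cG)^{\times}/G^{ab}\bigr)^{\Omega_L} \to H^1(\Omega_L, G^{ab}) \to H^1(\Omega_L, Z(L^cG)^{\times}),
\]
where the connecting map sends $\alpha G^{ab}$ to the class of the $1$-cocycle $\omega \mapsto \alpha^{-1}\alpha^\omega$. The first identification to make is that a coset $\alpha G^{ab}$ is $\Omega_L$-fixed precisely when $\alpha^{-1}\alpha^\omega \in G^{ab}$ for every $\omega \in \Omega_L$, i.e. exactly when $\alpha \in H(Z(LG))$; so the middle term of the displayed sequence is $H(Z(LG))/G^{ab}$, which is the group appearing in the statement of the proposition.

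The one remaining point, and the only genuinely substantive step, is to show that $H^1(\Omega_L, Z(L^cG)^{\times})$ vanishes, which will deliver the surjectivity onto $H^1(L, G^{ab})$. For this I would use the Wedderburn decomposition
\[
L^cG \;\cong\; \prod_{\chi \in \Irr(G)} M_{n_\chi}(L^c),
\]
which restricts on centres to an isomorphism of multiplicative groups $Z(L^cG)^{\times} \cong \prod_{\chi \in \Irr(G)} (L^c)^{\times}$. The group $\Omega_L$ acts on this product by permuting the factors through its action on $\Irr(G)$ (via $\chi \mapsto \omega \circ \chi$ on character values), the stabiliser of a character $\chi$ being the absolute Galois group $\Omega_{L(\chi)}$ of the field of values of $\chi$. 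Grouping factors into $\Omega_L$-orbits and applying Shapiro's lemma yields
\[
H^1\bigl(\Omega_L,\, Z(L^cG)^{\times}\bigr) \;\cong\; \prod_{[\chi]} H^1\bigl(\Omega_{L(\chi)},\, (L^c)^{\times}\bigr),
\]
each factor of which vanishes by Hilbert's Theorem 90 (applied to the extension $L^c/L(\chi)$).

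The main obstacle is nothing more than this last cohomological vanishing; the rest is bookkeeping to match the definitions of $H(Z(LG))$ and the reduced-norm embedding with the natural objects arising in the long exact sequence.
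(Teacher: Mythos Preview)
Your proof is correct and follows exactly the paper's approach: take $\Omega_L$-cohomology of the short exact sequence $1 \to G^{ab} \to Z(L^cG)^{\times} \to Z(L^cG)^{\times}/G^{ab} \to 1$ and invoke Hilbert's Theorem 90 to kill $H^1(\Omega_L, Z(L^cG)^{\times})$. The paper states this in one line, while you spell out the Wedderburn decomposition and Shapiro reduction behind the Hilbert 90 step; note also that the third term you obtain is $\cH(Z(LG)) = H(Z(LG))/G^{ab}$, which is what the paper's subsequent discussion uses (the $H(Z(LG))$ in the displayed sequence \eqref{E:res5} appears to be a typo for $\cH(Z(LG))$).
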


\begin{proof} This follows at once from taking $\Omega_L$ 
 cohomology of the exact sequence of abelian groups
\[
1 \to G^{ab} \to Z(L^cG)^{\times} \to Z(L^cG)^{\times}/G^{ab}
\to 1,
\]
arising from the injection $G^{ab} \to Z(L^{c}G)^{\times}$ induced by
the reduced norm map $\nrd: (LG)^{\times} \to Z(LG)^{\times}$ as
described above, and noting that $H^1(\Omega_L, Z(L^cG)^{\times}) =
1$, via Lemma \ref{L:H90}(ii).
\end{proof}

It is easy to see that the group $(LG)^{\times}$ acts on the pointed
set $\cH(LG)$ by left multiplication. Write $(LG)^{\times} \backslash
\cH(LG)$ for the quotient set afforded by this action.  It follows
from Theorem \ref{T:reskey} and Proposition \ref{P:abres} that there
are isomorphisms
\[
H^1(L, G) \xrightarrow{\sim} (LG)^{\times} \backslash \cH(LG)
\]
and
\[
H^1(L, G^{ab}) \xrightarrow{\sim} Z(LG)^{\times} \backslash
\cH(Z(LG))
\]
of pointed sets and abelian groups respectively, and that the
following diagram commutes:
\begin{equation} \label{E:resdiag}
\begin{CD}
H^1(L,G) @>{\sim}>>  (LG)^{\times} \backslash \cH(LG) \\
@VVV                                   @VV{\nrd}V \\
 H^1(L, G^{ab}) @>{\sim}>>       Z(LG)^{\times} \backslash
 \cH(Z(LG)).
\end{CD}
\end{equation}
(Here the left-hand vertical arrow is induced by the quotient
map $G \to G^{ab}$, while the right-hand vertical arrow is induced by
the reduced norm map $\nrd: (L^cG)^{\times} \to
Z(L^cG)^{\times}$. )

We shall need the following result in Section \ref{S:cc}.

\begin{proposition} \label{P:group}
Let $F$ be a number field. For each finite place $v$ of $F$, the image
of the map
\[
\nrd: (O_{F_v}G)^{\times} \backslash \cH(O_{F_v}G) \to
Z(O_{F_v}G)^{\times} \backslash \cH(Z(O_{F_v}G))
\]
of pointed sets is in fact a group.
\end{proposition}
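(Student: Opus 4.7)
The plan is to translate this statement about pointed-set maps into a question about the natural map of cohomology sets $H^{1}_{nr}(F_{v},G) \to H^{1}_{nr}(F_{v},G^{ab})$ induced by abelianization, and then to exploit the fact that $\Omega_{F_v}$ acts trivially on $G$ in order to describe that map very explicitly on conjugacy classes.

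First I would identify the source. Theorem \ref{T:reskey}(b) already supplies a bijection
\[
(O_{F_v}G)^{\times} \backslash \cH(O_{F_v}G) \xrightarrow{\sim} H^{1}_{nr}(F_v,G),
\]
and since $\Omega_{F_v}$ acts trivially on $G$, continuity forces every cocycle representing an unramified class to factor through $\Gal(F_v^{nr}/F_v) \cong \widehat{\bZ}$; evaluating at Frobenius then yields a further bijection $H^{1}_{nr}(F_v,G) \xrightarrow{\sim} \cC(G)$. Next I would establish the integral/unramified analog of Proposition \ref{P:abres} and of diagram \eqref{E:resdiag}: running through the same cohomology computation with the exact sequence
\[
1 \to G^{ab} \to Z(O_{F_v}^{c}G)^{\times} \to Z(O_{F_v}^{c}G)^{\times}/G^{ab} \to 1
\]
and invoking the abelian case of Corollary \ref{C:dual}(iv) (to identify integral resolvends with unramified $G^{ab}$-extensions) together with a Hilbert-90 style vanishing for central unit groups produces an isomorphism
\[
Z(O_{F_v}G)^{\times} \backslash \cH(Z(O_{F_v}G)) \xrightarrow{\sim} H^{1}_{nr}(F_v,G^{ab}) \xrightarrow{\sim} G^{ab}.
\]
Naturality then gives a commutative square
\[
\begin{CD}
(O_{F_v}G)^{\times} \backslash \cH(O_{F_v}G) @>{\sim}>> \cC(G) \\
@VV{\nrd}V @VVV \\
Z(O_{F_v}G)^{\times} \backslash \cH(Z(O_{F_v}G)) @>{\sim}>> G^{ab}
\end{CD}
\]
in which the right-hand vertical arrow is the map $[g] \mapsto gG'$ induced by the abelianization $G \twoheadrightarrow G^{ab}$.

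Since this last map $\cC(G) \to G^{ab}$ is manifestly surjective (the composite $G \to \cC(G) \to G^{ab}$ is the abelianization), the image of $\nrd$ is the \emph{entire} abelian group $Z(O_{F_v}G)^{\times} \backslash \cH(Z(O_{F_v}G))$, and so is a group as claimed. The main obstacle I expect is the middle step, namely verifying the integral unramified analog of Proposition \ref{P:abres}: one must pin down the right Hilbert-90 vanishing statement for $H^{1}$ of the integral central unit group over $F_v^{nr}$, and one must use Corollary \ref{C:dual}(iv) carefully to check that integrality of a central resolvend really is equivalent to the associated $G^{ab}$-Galois algebra being unramified, so that the quotient $\cH(Z(O_{F_v}G))/Z(O_{F_v}G)^{\times}$ genuinely coincides with $H^{1}_{nr}(F_v,G^{ab})$ rather than merely mapping to $H^{1}(F_v,G^{ab})$.
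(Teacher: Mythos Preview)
Your approach is essentially the same as the paper's: both set up the commutative square linking the resolvend quotients to $H^{1}_{nr}(F_v,G) \to H^{1}_{nr}(F_v,G^{ab})$, and both finish by observing that this cohomology map is surjective because $\Gal(F_v^{nr}/F_v)\cong\widehat{\bZ}$ is topologically cyclic (your explicit version being the surjection $\cC(G)\twoheadrightarrow G^{ab}$).

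The one substantive difference is that you aim to prove \emph{more} than is needed. You try to show that the bottom arrow $H^{1}_{nr}(F_v,G^{ab}) \to Z(O_{F_v}G)^{\times}\backslash\cH(Z(O_{F_v}G))$ is an isomorphism, which is why you run into the Hilbert-90 obstacle for the integral central unit group. The paper avoids this entirely: it only uses that this bottom arrow is an \emph{injective group homomorphism} (so its image is automatically a subgroup). Commutativity of the square plus surjectivity of the left vertical arrow then forces $\Image(\nrd)$ to coincide with that subgroup. So your flagged ``main obstacle'' is not actually an obstacle at all --- you can simply drop the claim that the target equals $G^{ab}$ and the argument goes through unchanged.
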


\begin{proof} Just as in the case of \eqref{E:resdiag}, we see from
  the exact sequences \eqref{E:res2} and \eqref{E:res5} that there is
  a commutative diagram
\begin{equation} \label{E:unramresdiag}
\begin{CD}
H_{nr}^{1}(F_v,G) @>{\sim}>>  (O_{F_v}G)^{\times} \backslash \cH(O_{F_v}G) \\
@VVV                                   @VV{\nrd}V \\
 H_{nr}^{1}(F_v, G^{ab}) @>>>       Z(O_{F_v}G)^{\times} \backslash
 \cH(Z(O_{F_v}G)) \\
@VV{\cap}V                          @VV{\cap}V \\
H^{1}(F_v, G^{ab})  @>{\sim}>> Z(F_vG)^{\times} \backslash \cH(Z(F_vG)).
\end{CD}
\end{equation}

The middle horizontal arrow of \eqref{E:unramresdiag} is therefore
injective, and its image is a subgroup of $Z(O_{F_v}G)^{\times}
\backslash \cH(Z(O_{F_v}G))$.  Hence, to prove the desired result, it
suffices to show that the map $H_{nr}^{1}(F_v, G) \to H_{nr}^{1}(F_v,
G^{ab})$ is surjective. This is in turn an immediate consequence of
the fact that the Galois group $\Gal(F^{nr}_{v}/F_v)$ is profinite
free on a single generator.
\end{proof}


\section{Determinants and character maps} \label{S:det}

In this section we shall describe how determinants of resolvends
may be represented in terms of certain character maps.

Let $L$ be a number field or a local field. 

Suppose that $\Gamma$ is any finite group upon which the absolute
Galois group $\Omega_L$ of $L$ acts (possibly trivially). Then
$\Omega_L$ also acts on the ring $R_{\Gamma}$ of virtual characters of
$\Gamma$ according to the following rule: if $\chi \in \Irr(\Gamma)$
and $\omega \in \Omega_L$, then, for each $\gamma \in \Gamma$, we have
$\chi^{\omega}(\gamma) = \omega(\chi(\omega^{-1}(\gamma)))$.

We begin by recalling some well-known facts and definitions concerning
determinant maps (see e.g. \cite[Chapter II]{Fr} or \cite[Chapter
  I]{Fr1}).

\begin{definition} \label{D:Det}
For each element $a$ of $\GL_n(L^cG)$, we define an element 
\begin{equation} \label{E:Detdef}
\Det(a)
\in \Hom(R_G, (L^{c})^{\times}) \simeq Z(L^{c}G)^{\times}
\end{equation}
in the following way: if $T$ is any representation of $G$ over $L^c$
 with character $\phi$, then we set
\[
\Det(a)(\phi):= \det(T(a)).
\]
It may be shown that this definition depends only upon the character
$\phi$, and not upon the choice of representation $T$. The map
\[
\Det: \GL_{n}(L^cG) \to \Hom(R_{G},(L^{c})^{\times})
\]
is $\Omega_L$-equivariant, and so induces a map
\[
\Det: \GL_n(LG) \to \Hom_{\Omega_{L}}(R_{G}, (L^c)^{\times}).
\]
\qed
\end{definition}

\begin{remark} \label{R:nrd}
The map $\Det$ in \eqref{E:Detdef} above is essentially the same as
the reduced norm map.  Let
\begin{equation} \label{E:nrd}
\nrd: (L^cG)^{\times} \to Z(L^cG)^{\times}
\end{equation}
denote the reduced norm. Then \eqref{E:nrd} induces an isomorphism
\begin{equation} \label{E:nrdiso}
\nrd: K_1(L^cG) \xrightarrow{\sim} Z(L^cG)^{\times} \simeq \Hom(R_G,
(L^c)^{\times})
\end{equation}
(see e.g. \cite[Theorem 45.3]{CR2}). Suppose now that $\phi$ is any
$L^c$-valued character of $G$, and let $a \in (L^cG)^{\times}$. Then
we have that
\[
\Det(a)(\phi) = \nrd(a)(\phi)
\]
(see \cite[Chapter I, Proposition 2.7]{Fr1}). \qed
\end{remark}

\begin{definition} \label{D:abdet}
Suppose that $\chi \in \Irr(G)$. We define an abelian character
$\det(\chi)$ of $G$ as follows. Let $T$ be any representation of 
$G$ over $L^c$ affording $\chi$. For each element $g \in G$, we
set
\[
(\det(\chi))(g) = \Det(T(g)).
\]
Then $\det(\chi)$ is independent of the choice of $T$, and may
be viewed as being a character of $G^{ab}$. We extend $\det$
to a homomorphism $R_G \to (G^{ab})^{\wedge}$, where $(G^{ab})^{\wedge}$
denotes the group of characters of $G^{\ab}$, by defining
\[
\det \left( \sum_{\chi \in \Irr(G)} a_\chi \chi \right)  = \prod_{\chi \in 
\Irr(G)}
(\det(\chi))^{a_\chi},
\]
and we set 
\[
A_G:= \Ker(\det).
\]
Hence we have an exact sequence of groups
\begin{equation} \label{E:aug}
0 \to A_G \to R_G \xrightarrow{\det} (G^{ab})^{\wedge} \to 0.
\end{equation}
\qed
\end{definition}

Applying the functor $\Hom(-, (L^c)^{\times})$ 
to \eqref{E:aug}, we
obtain an exact sequence
\begin{equation} \label{E:globrag}
0 \to G^{ab} \to \Hom(R_G, (L^{c})^{\times}) \xrightarrow{\rag}
\Hom(A_G, (L^c)^{\times}) \to 0,
\end{equation}
which is surjective on the right because $(L^c)^{\times}$ is
divisible. It follows that there are $\Omega_L$-equivariant
isomorphisms
\begin{equation} \label{E:zres}
\Hom(A_G, (L^c)^{\times}) \simeq \Hom(R_G, (L^{c})^{\times})/G^{ab}
\simeq Z(L^cG)^{\times}/G^{ab}.
\end{equation}
In what follows, we shall sometimes identify $\Hom(A_G,
(L^c)^{\times})$ with $Z(L^cG)^{\times}/G^{ab}$ via \eqref{E:zres}
without explicit mention.

Taking $\Omega_L$-cohomology of \eqref{E:globrag}
yields an  exact sequence
\begin{equation} \label{E:globcoh}
0 \to G^{ab} \to \Hom_{\Omega_L}(R_G, (L^c)^{\times}) \xrightarrow{\rag}
\Hom_{\Omega_L}(A_G, (L^c)^{\times}) \to H^1(L, G^{ab}) \to 1,
\end{equation}
which is surjective on the right via Lemma \ref{L:H90}(ii).

\begin{definition} \label{D:symp}
Let $R^s_G$ denote the (additive) subgroup of $R_G$ generated by the
symplectic characters of $G$. Thus, $R^s_G$ is generated by the
irreducible symplectic characters of $G$, together with elements of
the form $\chi + \ov{\chi}$, where $\chi \in R_G$ and $\ov{\chi}$
denotes the complex conjugate of $\chi$. All virtual characters lying
in $R^s_G$ are real-valued.

If $F$ is a number field, and $v$ is a real place of $F$, we write
$\Hom^{+}_{\Omega_{F_v}}(R_G, (F_{v}^{c})^{\times})$ for those
elements $f \in \Hom_{\Omega_{F_v}}(R_G, (F_{v}^{c})^{\times})$ for
which $f(\eta)>0$ for all $\eta \in R^s_G$. Note that if $f \in
\Hom_{\Omega_{F_v}}(R_G, (F_{v}^{c})^{\times})$ and $\chi \in R_G$,
then we automatically have
\[
f(\chi + \ov{\chi}) = f(\chi) \cdot \ov{f(\chi)} > 0.
\]
Hence in fact $f \in \Hom^{+}_{\Omega_{F_v}}(R_G,
(F_{v}^{c})^{\times})$ if and only if $f$ is positive on all
irreducible, symplectic characters of $G$. In particular, if $G$ has
no non-trivial irreducible symplectic characters (e.g. if $|G|$ is
odd), then we have
\[
\Hom^{+}_{\Omega_{F_v}}(R_G, (F_{v}^{c})^{\times}) =
\Hom_{\Omega_{F_v}}(R_G, (F_{v}^{c})^{\times}).
\]

We write $Z(F_vG)^{\times}_{+}$ for the image of
$\Hom^{+}_{\Omega_{F_v}}(R_G, (F_{v}^{c})^{\times})$ in
$Z(F_vG)^{\times}$ under the isomorphism
\[
\Hom_{\Omega_{F_v}}(R_G, (F_{v}^{c})^{\times}) \xrightarrow{\sim}
Z(F_vG)^{\times}.
\]

\qed
\end{definition}

\begin{proposition}  \label{P:agdet}
Let $F$ be a number field. For each place $v$ of $F$, we write
\begin{equation} \label{E:locdet}
\Det: (F^{c}_{v}G)^{\times} \to \Hom(R_G, (F^{c}_{v})^{\times}) \simeq
Z(F_v^cG)^{\times}
\end{equation}
for the determinant homomorphism afforded by Definition \ref{D:Det}.

(i) If $v$ is real, then \eqref{E:locdet} induces an isomorphism
\begin{equation} \label{E:rvdet}
\Det((F_vG)^{\times}) \simeq \Hom^{+}_{\Omega_{F_v}}(R_G,
(F_{v}^{c})^{\times}) \simeq Z(F_vG)^{\times}_{+}.
\end{equation}

(ii) If $v$ is finite or complex, then the map \eqref{E:locdet}
induces isomorphisms
\begin{align}
\Det((F_vG)^{\times}) &\simeq \Hom_{\Omega_{F_v}}(R_G,
(F^c_v)^{\times}) \simeq Z(F_vG)^{\times},  \label{E:vdeti}\\
\Det(\cH(F_vG)) &\simeq \Hom_{\Omega_{F_v}}(A_G, (F^c_v)^{\times}).  \label{E:vdetii}
\end{align}

(iii) If $v$ is finite of residue characteristic coprime to $|G|$, so
$O_{F_v}G$ is an $O_{F_v}$-maximal order in $F_vG$, then
\eqref{E:locdet} induces isomorphisms
\begin{align}
\Det((O_{F_v}G)^{\times}) &\simeq \Hom_{\Omega_{F_v}}(R_G,
(O_{F^c_v})^{\times}) \simeq Z(O_{F_v}G)^{\times}, \label{E:vdetiii}\\
\Det(\cH(O_{F_v}G)) &\simeq \Hom_{\Omega_{F_v}}(A_G,
(O_{F^c_v})^{\times}). \label{E:vdetiv}
\end{align}
\end{proposition}

\begin{proof}
The isomorphisms \eqref{E:rvdet}, \eqref{E:vdeti} and
\eqref{E:vdetiii} are standard and are explained in e.g. \cite[Chapter
  II, \S1]{Fr}. 

Suppose that $v$ is either finite or complex. Theorem
\ref{T:reskey}(a) and \eqref{E:vdeti} yield the following
commutative diagram:
\begin{equation} \label{E:ragcdi}
\begin{CD}
G @>{\subseteq}>> (F_vG)^{\times} @>>> \cH(F_vG) @>{\epi}>> H^1(F_v, G) 
\\
@VVV    @VV{\Det}V   @VV{\Det}V @VV{\epi}V  \\
G^{ab} @>{\subseteq}>> \Det((F_vG)^{\times}) @>>>
\Det(\cH(F_vG)) @>{\epi}>> 
H^1(F_v, G^{ab})  \\
@| @VV{\sim}V @VVV @|   \\
G^{ab} @>{\subseteq}>> \Hom_{\Omega_{F_v}}(R_G,(F_v^c)^{\times}) @>>>
\Hom_{\Omega_{F_v}}(A_G, (F^c_v)^{\times}) @>{\epi}>> H^1(F_v, G^{ab}), 
\end{CD}
\end{equation}

and this implies that the map
\[
\Det(\cH(F_vG)) \to \Hom_{\Omega_{F_v}}(A_G, (F^{c}_{v})^{\times})
\]
is an isomorphism, which proves \eqref{E:vdetii}.

Suppose now that $v$ is finite of residue characteristic coprime to
$|G|$. In order to establish \eqref{E:vdetiv}, we first observe that
applying the functor $\Hom(-, (O_{F^{c}_{v}})^{\times})$ to the exact
sequence \eqref{E:aug} yields a sequence
\begin{equation} \label{E:intaug}
0 \to G^{ab} \to \Hom(R_G, (O_{F^{c}_{v}})^{\times}) \to \Hom(A_G,
(O_{F^{c}_{v}})^{\times}) \to 1
\end{equation}
which is surjective on the right because $(O_{F^{c}_{v}})^{\times}$ is
divisible. Taking $\Omega_{F_v}$-cohomology of \eqref{E:intaug} yields 
\begin{align} \label{E:intaug2i}
0 &\to G^{ab} \to \Hom_{\Omega_{F_v}}(R_G, (O_{F^{c}_{v}})^{\times})
\to \Hom_{\Omega_{F_v}}(A_G, (O_{F^{c}_{v}})^{\times}) \to \notag
\\ 
&\to H^1(F_v, G^{ab}) \xrightarrow{f} H^1(F_v, \Hom(R_G,
(O_{F^{c}_{v}})^{\times})).
\end{align}
Now since $v \nmid |G|$, $Z(O_{F_v}G)$ is an $O_{F_v}$-maximal order
in (the split algebra) $Z(F_vG)$, and $Z(O_{F^{c}_{v}}G)^{\times}
\simeq \Hom(R_G, (O_{F^{c}_{v}})^{\times})$ (cf. \eqref{E:vdetiii}).
Suppose that $\pi \in \Ker(f)$. Then there exists $u \in
Z(O_{F^{c}_{v}}G)^{\times}$ such that $u^{\omega} \cdot u^{-1} =
\pi(\omega)$ for all $\omega \in \Omega_{F_v}$. This implies that
$u^{|G^{ab}|} \in Z(O_{F_v}G)^{\times}$. As $v \nmid |G^{ab}|$ and
$Z(O_{F_v}G)$ is a maximal order, it follows that $u \in
Z(O_{F^{nr}_{v}}G)^{\times}$, and so $\pi \in H^{1}_{nr}(F_v,
G^{ab})$. Hence there is an exact sequence
\begin{equation} \label{E:intaug2}
0 \to G^{ab} \to \Hom_{\Omega_{F_v}}(R_G, (O_{F^{c}_{v}})^{\times})
\to \Hom_{\Omega_{F_v}}(A_G, (O_{F^{c}_{v}})^{\times}) \to H^{1}_{nr}(F_v,
G^{ab}).
\end{equation}

We recall also (see the proof of Proposition \ref{P:group}) that the
natural map $H^{1}_{nr}(F_v, G) \to H^{1}_{nr}(F_v, G^{ab})$ is
surjective because the group $\Gal(F^{nr}_{v}/F_v)$ is profinite free
on a single generator. Theorem \ref{T:reskey}(b) together with
\eqref{E:vdetiii} and \eqref{E:intaug2} now yield the following
commutative diagram:
\begin{equation} \label{E:ragcdii}
\begin{CD}
G @>{\subseteq}>> (O_{F_v}G)^{\times} @>>> \cH(O_{F_v}G) @>{\epi}>> H_{nr}^{1}(F_v, G) 
\\
@VVV    @VV{\Det}V   @VV{\Det}V @VV{\epi}V  \\
G^{ab} @>{\subseteq}>> \Det((O_{F_v}G)^{\times}) @>>>
\Det(\cH(O_{F_v}G)) @>{\epi}>> H_{nr}^{1}(F_v, G^{ab}) \\
@| @VV{\sim}V @VVV @|   \\
G^{ab} @>{\subseteq}>> \Hom_{\Omega_{F_v}}(R_G,(O_{F_v^c})^{\times}) @>>>
\Hom_{\Omega_{F_v}}(A_G, (O_{F^c_v})^{\times}) @>>> H_{nr}^{1}(F_v, G^{ab}) . 
\end{CD}
\end{equation}
It follows from \eqref{E:ragcdii} that the third row of this
diagram is surjective on the right. Since $\Det(\cH(O_{F_v}G))$ is a
subgroup of $\Hom_{\Omega_{F_v}}(A_G, (O_{F^c_v})^{\times})$, we see
that the map
\[
\Det(\cH(O_{F_v}G))\to \Hom_{\Omega_{F_v}}(A_G,
(O_{F^{c}_{v}})^{\times})
\]
is an isomorphism. This establishes \eqref{E:vdetiv}.
\end{proof}

If on the other hand $v$ is finite and $v \mid |G|$, so $O_{F_v}G$ is
not an $O_{F_v}$-maximal order in $F_vG$, then we have
\[
\Det(\cH(O_{F_v}G)) \subseteq
\Hom_{\Omega_{F_v}}(A_G, (O^{c}_{F_v})^{\times}),
\]
but this inclusion is not in general an equality. If $\fa$ is any
integral ideal of $O_F$, set
\[
U_{\fa}(O_{F^c_v}):= (1 + \fa O_{F^c_v}) \cap
(O_{F^c_v})^{\times},
\]
and write $U_{\alpha}(O_{F^c_v})$ instead of $U_{\fa}(O_{F^c_v})$ when
$\fa = \alpha O_F$.  We shall need the following result of A. Siviero
(which is a variant of \cite[Theorem 2.14]{Mc1}) in Section
\ref{S:modray}.

\begin{proposition} (A. Siviero)  \label{P:intim}
Let $v$ be a finite place of $F$. Then if $N \in \bZ_{>0}$ is
divisible by a sufficiently large power of $|G|$, we have
\[
\Hom_{\Omega_{F_v}}(A_G, U_N(O_{F^c_v})) \subseteq
\Det(\cH(O_{F_{v}}G)) \subseteq \Hom_{\Omega_{F_v}}(A_G,
(O_{F^c_v})^{\times}).
\]
\end{proposition}

\begin{proof} This is shown in \cite[Theorem 5.1.10]{Siv} when $G$ is
  abelian, and the proof for arbitrary finite $G$ is quite similar. As
  \cite{Siv} is not widely accessible, we describe the argument.

If $v \nmid |G|$, then Proposition \ref{P:agdet}(iii) implies that we
have
\[
\Hom_{\Omega_{F_v}}(A_G, O^{\times}_{F^c_v}) =
\Det(\cH(O_{F_v}G)) = \Hom_{\Omega_{F_v}}(A_G,
(O_{F^c_v})^{\times}),
\]
and so it follows that the desired result holds in this case. We may
therefore suppose that $v \mid |G|$.

We first observe that the group
\[
\frac{\Hom_{\Omega_{F_v}}(A_G,
  (O_{F^c_v})^{\times})}{\Det((O_{F_v}G)^{\times}/G)}
\]
is annihilated by $|G^{ab}|[\Det(\cM_{v}^{\times}):
  \Det(O_{F_v}G)^{\times}]$, where $\cM_v$ denotes any
$O_{F_v}$-maximal order in $F_vG$ containing $O_{F_v}G$. Since $A_G$
is finitely generated, it follows that $\Det((O_{F_v}G)^{\times}/G)$
is of finite index in $\Hom_{\Omega_{F_v}}(A_G,
(O_{F^c_v})^{\times})$, and so is an open subgroup of
$\Hom_{\Omega_{F_v}}(A_G, (O_{F^c_v})^{\times})$. The result now
follows from the fact that, because $v \mid |G|$, the collection of
groups
\[
\left\{ \Hom_{\Omega_{F_v}}(A_G, U_{|G|^{n}}(O_{F^c_v})) \mid n \geq
0 \right\}
\] 
is a fundamental system of neighbourhoods of the identity of
$\Hom_{\Omega_{F_v}}(A_G, (O_{F^c_v})^{\times})$.
\end{proof}

\begin{remark} \label{R:intim}
When $G$ is abelian, it follows from \cite[Theorem 2.14]{Mc1} that we
may take $N = |G|^2$ in Proposition \ref{P:intim}. \qed
\end{remark}

We shall also require the following related result in Section \ref{S:neu}.

\begin{proposition}   \label{P:kill}
Let $\Gamma$ be a finite group with an action of $\Omega_F$. Suppose
that $v \mid |\Gamma|$ is a finite place of $F$, and write $\fp_v$ for
the maximal ideal of $O_{F_v}$. Then for all sufficiently large $n$,
we have
\[
\Hom_{\Omega_{F_v}}(A_{\Gamma}, U_{\fp_{v}^{n}}(O_{F^c_v})) \subseteq
\rag[\Hom_{\Omega_{F_v^c}}(R_{\Gamma}, (O_{F_v^c})^{\times})].
\]
\end{proposition}

\begin{proof}
The proof of this is very similar to that of Proposition
\ref{P:intim}. We observe that 
\[
|\Gamma^{ab}| \cdot \Hom_{\Omega_{F^c_v}}(A_{\Gamma},
(O_{F_v^c})^{\times})
\subseteq 
\rag[\Hom_{\Omega_{F_v^c}}(R_{\Gamma}, (O_{F_v^c})^{\times})],
\]
which implies that $\rag[\Hom_{\Omega_{F_v^c}}(R_{\Gamma},
  (O_{F_v^c})^{\times})]$ is an open subgroup of
$\Hom_{\Omega_{F^c_v}}(A_{\Gamma}, (O_{F_v^c})^{\times})$ because
$A_{\Gamma}$ is finitely generated. The desired result now follows
from the fact that the collection of groups $\{
\Hom_{\Omega_{F_v}}(A_{\Gamma}, U_{\fp_{v}^{n}}(O_{F^c_v})) \mid n\geq
0 \}$ is a fundamental system of neighbourhoods of the identity of
$\Hom_{\Omega_{F^c_v}}(A_{\Gamma}, (O_{F_v^c})^{\times})$.
\end{proof}


\section{Twisted forms and relative $K$-groups} \label{S:reshom}

Recall that $G$ is a finite group upon which $\Omega_F$ acts
trivially. In this section, we shall recall some basic facts
concerning categorical twisted forms and relative algebraic
$K$-groups. The reader may consult \cite{AB} and \cite[Chapter 15]{Sw}
for some of the details that we omit.

\subsection{Twisted forms}
Suppose that $R$ is a Dedekind domain with field of fractions $L$ of
characteristic zero. (For notational convenience, we shall sometimes
also allow ourselves to take $R = L$.) Let $\fA$ be any $R$-algebra
which is finitely generated as an $R$-module and which satisfies $\fA
\otimes_R L \simeq LG$.

\begin{definition} \label{D:twisted}
Let $\Lambda$ be any extension of $R$, and write $\cP(\fA)$ and
$\cP(\fA \otimes_R \Lambda)$ for the categories of finitely generated,
projective $\fA$ and $\fA \otimes_R \Lambda$-modules respectively. A
\textit{categorical $\Lambda$-twisted $\fA$-form} (or \textit{twisted
  form} for short) is an element of the fibre product category
$\cP(\fA) \times_{\cP(\fA \otimes_R \Lambda)} \cP(\fA)$, where the
fibre product is taken with respect to the functor $\cP(\fA) \to
\cP(\fA \otimes_R \Lambda)$ afforded by extension of scalars. In
concrete terms therefore, a twisted form consists of a triple
$(M,N;\xi)$, where $M$ and $N$ are finitely generated, projective
$\fA$-modules, and
\[
\xi: M \otimes_R \Lambda \xrightarrow{\sim} N \otimes_R \Lambda
\]
is an isomorphism of $\fA \otimes_R \Lambda$-modules. \qed
\end{definition}

\begin{example} 
If $F_{\pi}/F$ is any $G$-extension, and $\cL_{\pi} \subseteq F_{\pi}$
is any non-zero projective $O_FG$-module, then $(\cL_{\pi}, O_FG; \br_G)$ is a
categorical $F^c$-twisted $O_FG$-form. In particular, if $F_{\pi}/F$
is a tame $G$-extension, then $(O_\pi, O_FG; \br_G)$ is a categorical
$F^c$-twisted $O_FG$-form. Similarly, if $v$ is any place of
$F$, then (still assuming $F_{\pi}/F$ to be tame) $(O_{\pi,v}, O_{F_v}G;
\br_G)$ is a categorical $F^c_v$-twisted $O_{F_v}G$-form. We shall mainly
be concerned with twisted forms of these types in this paper.  \qed
\end{example}

We write $K_0(\fA, \Lambda)$ for the Grothendieck group associated to
the fibre product category $\cP(\fA) \times_{\cP(\fA \otimes_R
  \Lambda)} \cP(\fA)$, and we write $[M,N;\xi]$ for the isomorphism
class of the twisted form $(M,N;\xi)$ in $K_0(\fA, \Lambda)$. The
group $K_0(\fA, \Lambda)$ is often called \textit{the relative $K$-group with
respect to the homomorphism $\fA \to \Lambda$}. Recall (see
\cite[Theorem 15.5]{Sw}) that there is a long exact sequence of
relative algebraic $K$-theory:
\begin{equation} \label{E:RKES}
K_1(\fA) \rightarrow K_1(\fA \otimes_R \Lambda)
\xrightarrow{\partial^{1}_{\fA,\Lambda}} K_0(\fA, \Lambda)
\xrightarrow{\partial^{0}_{\fA, \Lambda}} K_0(\fA) \rightarrow
K_0(\fA \otimes_R \Lambda).
\end{equation}

The first and last arrows in this sequence are afforded by
extension of scalars from $R$ to $\Lambda$. The map
$\partial^{0}_{\fA, \Lambda}$ is defined by
\[
\partial^{0}_{\fA, \Lambda}([M,N;\lambda]) = [M] - [N].
\]
The map $\partial^{1}_{\fA, \Lambda}$ is defined by first recalling
that the group $K_1(\fA \otimes_R \Lambda)$ is generated by pairs of the form
$(V, \phi)$, where $V$ is a finitely generated, free,
$\fA \otimes_R \Lambda$-module, and $\phi: V \xrightarrow{\sim} V$ is an
$\fA \otimes_R \Lambda$-isomorphism. If $T$ is any projective $\fA$-submodule
of $V$ satisfying $T \otimes_{\fA} \Lambda \simeq V$, then we set
\[
\partial^{1}_{\fA, \Lambda}(V, \phi) = [T, T; \phi].
\]
It may be shown that this definition is independent of the choice of $T$.

We shall often ease notation and write e.g. $\partial^{0}$ rather than
$\partial^{0}_{\fA, \Lambda}$ when no confusion is likely to result.

\subsection{Idelic description and localisation} \cite[Chapter II, \S1]{Fr}.
Let us retain the notation established above, and suppose in addition
that we now work over a number field $F$.  The reduced norm map
\[
\nrd: (FG)^{\times} \to Z(FG)^{\times}
\]
induces isomorphisms
\begin{equation} \label{E:kiso}
K_1(FG) \simeq \nrd(K_1(FG)) \simeq \nrd((FG)^{\times}) \simeq
\Det((FG)^{\times}) \subseteq Z(FG)^{\times}
\end{equation}
and
\begin{equation} \label{E:kviso}
K_1(F_vG) \simeq \nrd(K_1(F_vG)) \simeq \nrd((F_vG)^{\times}) \simeq
\Det((F_vG)^{\times}) \subseteq Z(F_vG)^{\times}
\end{equation}
for each place $v$ of $F$. In general the natural map $K_1(\fA_v) \to
K_1(F_vG)$ is not injective, and so the reduced norm map
\[
\nrd: K_1(\fA_v) \to Z(\fA_v)^{\times}
\]
is not an isomorphism (although it is surjective if $\fA_v$ is an
$O_{F_v}$-maximal order in $F_vG$). If we write $K_1(\fA_v)'$ for the image of
$K_1(\fA_v)$ in $K_1(F_vG)$, then \eqref{E:kviso} induces isomorphisms
\begin{equation} \label{E:okviso}
K_1(\fA_v)' \simeq \nrd(K_1(\fA_v)') \simeq \nrd((\fA_v)^{\times})
\simeq \Det(\fA_{v}^{\times}).
\end{equation}
We shall make frequent use of the identifications \eqref{E:kiso},
\eqref{E:kviso} and \eqref{E:okviso} (as well as those afforded by
Proposition \ref{P:agdet}) in what follows, sometimes without explicit
mention.

For each place $v$ of $F$, we write
\[
\loc_v: K_1(FG) \to K_1(F_vG)
\]
for the obvious localisation map. 

\begin{definition} We define the group of ideles
$J(K_1(FG))$ of $K_1(FG)$ to be the restricted direct product over all
  places $v$ of $F$ of the groups $\Det(F_vG)^{\times} \simeq
  K_1(F_vG)$ with respect to the subgroups $\Det(O_{F_v}G)^{\times}$.
  We define the group of finite ideles $J_f(K_1(FG))$ in a similar
  manner but with the restricted direct product taken over all finite
  places $v$ of $F$.
\qed
\end{definition}

If $E$ is any extension of $F$, then the homomorphism
\[
\Det(FG)^{\times} \to J(K_1(FG)) \times \Det(EG)^{\times};\quad x \mapsto
((\loc_v(x))_v, x^{-1})
\]
induces a homomorphism
\[
\Delta_{\fA,E}: \Det(FG)^{\times} \to
\frac{J(K_1(FG))}{\prod_{v} \Det(\fA_v)^{\times}} \times
\Det(EG)^{\times}.
\]

\begin{theorem} \label{T:kdes}
(a) There is a natural isomorphism
\[
\Cl(\fA) \xrightarrow{\sim} \frac{J(K_1(FG))}{\Det(FG)^{\times}
    \prod_{v} \Det(\fA_v)^{\times}}.
\]

(b) There is a natural isomorphism
\[
h_{\fA,E}: K_0(\fA, E) \xrightarrow{\sim} \Coker(\Delta_{\fA,E}).
\]
\qed
\end{theorem}

\begin{proof} Part (a) is a well-known result of A. Fr\"ohlich (see
  e.g \cite[Chapter I]{Fr1}. Part (b) is proved in \cite[Theorem
    3.5]{AB}.
\end{proof}

\begin{remark} \label{R:repid}
If $[M,N;\xi] \in K_0(\fA, E)$ and $M$, $N$ are locally free
$\fA$-modules of rank one (which is the only case that we shall need in this
paper), then $h_{\fA,E}([M,N;\xi])$ may be described explicitly as
follows. 

For each place $v$ of $F$, we choose $\fA_v$-bases $m_v$ of $M_v$ and
$n_v$ of $N_v$. We also choose an $FG$ basis $n_{\infty}$ of $N_F$, as
well as an $FG$-module isomorphism $\theta: M_F \xrightarrow{\sim}
N_F$.  Then, for each $v$, we may write $n_v = \nu_v \cdot
n_{\infty}$, with $\nu_v \in (F_vG)^{\times}$. As
$\theta^{-1}(n_\infty)$ is an $FG$-basis of $M_F$, we may write $m_v =
\mu_v \cdot \theta^{-1}(n_{\infty})$, with $\mu_v \in
(F_vG)^{\times}$. Finally, writing $\theta_E$ for the map $M_E \to
N_E$ afforded by $\theta$ via extension of scalars from $F$ to $E$, we
have that $(\xi \circ \theta^{-1}_{E})(n_\infty) = \nu_{\infty} \cdot
n_{\infty}$ for some $\nu_{\infty} \in (EG)^{\times}$.  Then a
representative of $h_{\fA,E}([M,N;\xi])$ is given by the image of
$[(\mu_v \cdot \nu_{v}^{-1})_v, \nu_{\infty}]$ in $J(K_1(FG)) \times
K_1(EG)$, and a representative of $\partial^0(h_{\fA,E}([M,N;\xi]))
\in \Cl(\fA)$ is given by the image of $(\mu_v \cdot \nu_{v}^{-1})_v
\in J(K_1(FG))$.
\qed
\end{remark}

\begin{remark} \label{R:noinf}
As $\fA_v = F_vG$ when $v$ is infinite (by convention), we see that
\[
\frac{J(K_1(FG))}{\prod_{v} \Det(\fA_v)^{\times}}
\simeq
\frac{J_f(K_1(FG))}{\prod_{v \nmid \infty} \Det(\fA_v)^{\times}}.
\]
Hence the infinite places of $F$ in fact play no explicit role on the
right-hand sides of the isomorphisms given by Theorem \ref{T:kdes},
and so these isomorphisms may be formulated using the finite idele
group $J_f(K_1(FG))$ of $K_1(FG)$ instead of the full idele group
$J(K_1(FG))$.  \qed
\end{remark}

\begin{lemma} \label{L:klocdes}
Suppose that $v$ is a place of $F$ and that $E_v$ is any extension of
$F_v$. Then there is an isomorphism
\[
K_0(\fA_v, E_v) \simeq \Det(E_vG)^{\times}/ \Det(\fA_v)^{\times}.
\]
\end{lemma}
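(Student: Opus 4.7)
My plan is to extract the isomorphism directly from the long exact sequence \eqref{E:RKES} of relative algebraic $K$-theory applied to the pair $(\fA_v, E_v)$:
\[
K_1(\fA_v) \xrightarrow{\iota} K_1(E_vG) \xrightarrow{\partial^{1}} K_0(\fA_v, E_v) \xrightarrow{\partial^{0}} K_0(\fA_v) \xrightarrow{j} K_0(E_vG),
\]
where both $\iota$ and $j$ are induced by extension of scalars from $O_{F_v}$ to $E_v$. Once I can show that $j$ is injective, it follows that $\partial^{0}=0$, so $\partial^{1}$ is surjective and factors through to give an isomorphism
\[
K_1(E_vG)/\Image(\iota) \xrightarrow{\sim} K_0(\fA_v, E_v).
\]
Following the convention introduced just above Definition \ref{D:twisted} (where one identifies $K_1(\fA_v)$ with its image in $K_1(F_vG) \hookrightarrow K_1(E_vG)$ via the reduced norm), the left-hand side is precisely the claimed quotient $K_1(E_vG)/K_1(\fA_v)$.

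The substantive task is therefore the injectivity of $j$, which I would handle by factoring it through $K_0(F_vG)$ as $j = j_2 \circ j_1$ with
\[
K_0(\fA_v) \xrightarrow{j_1} K_0(F_vG) \xrightarrow{j_2} K_0(E_vG).
\]
For $j_1$, this is the classical fact that over the complete discrete valuation ring $O_{F_v}$, Krull--Schmidt gives $K_0(\fA_v)$ as the free abelian group on indecomposable projective $\fA_v$-modules, and the resulting generic-fibre map into $K_0(F_vG)$ has no kernel (this is the Cartan--Brauer injectivity for orders in semisimple algebras; see e.g.\ \cite[Chapter 15]{Sw}). For $j_2$, injectivity follows from semisimplicity: distinct simple $F_vG$-modules correspond to orthogonal central primitive idempotents of $F_vG$, which remain orthogonal in $E_vG$, so their base changes are sums of disjoint families of simple $E_vG$-modules.

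The principal potential obstacle is the clean invocation of the Cartan--Brauer injectivity of $j_1$; apart from that, the argument is a direct application of \eqref{E:RKES}. Once $\partial^{0} = 0$ has been established, the desired isomorphism $K_0(\fA_v, E_v) \simeq K_1(E_vG)/K_1(\fA_v)$ follows immediately from the exactness of the sequence at $K_0(\fA_v, E_v)$.
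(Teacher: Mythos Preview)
Your proposal is correct and follows exactly the approach the paper takes: the paper's entire proof is the single sentence ``This follows directly from the long exact sequence of relative $K$-theory \eqref{E:RKES} applied to $K_0(\fA_v, E_v)$,'' and you have simply supplied the details (injectivity of $j$ via the factorisation through $K_0(F_vG)$) that the paper leaves to the reader. One minor correction: the convention identifying $K_1(\fA_v)$ with its image in $K_1(F_vG)$ is stated at the start of the ``Idelic description and localisation'' subsection, not above Definition~\ref{D:twisted}.
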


\begin{proof} This follows directly from the long exact sequence of
  relative $K$-theory \eqref{E:RKES} applied to $K_0(\fA_v, E_v)$,
  together with \eqref{E:kviso} and \eqref{E:okviso}.
\end{proof}

For each place $v$ of $F$, there is a localisation map on
relative $K$-groups:
\[
\lambda_v: K_0(\fA, E) \to K_0(\fA_v, E_v); \quad [M,N;\xi] \mapsto [M_v, N_v;
  \xi_v],
\]
where $\xi_v$ denotes the map obtained from $\xi$ via extension of
scalars from $E$ to $E_v$. It is not hard to check that, in terms of
the descriptions of $K_0(\fA, E)$ and $K_0(\fA_v, E_v)$ afforded by
Theorem \ref{T:kdes} and Lemma \ref{L:klocdes}, the map $\lambda_v$ is
that induced by the homomorphism (which we denote by the same symbol
$\lambda_v$) 
\[
\lambda_v: J(K_1(FG)) \times \Det(EG)^{\times}  \to \Det(E_vG)^{\times};
\quad [(x_v)_v, x_{\infty}] \mapsto [x_v \cdot \loc_v(x_{\infty})].
\]

\begin{definition}  \label{D:kiddes}
We define the idele group $J(K_0(\fA, E))$ of $K_0(\fA, E)$ to be
the restricted direct product over all places $v$ of $F$ of the groups
$K_0(\fA_v, E_v)$ with respect to the subgroups $K_0(\fA_v,
O_{E_v})$.

We define the group of finite ideles $J_f(K_0(\fA, F^c))$ in a similar
manner, but with the restricted direct product taken over all finite
places of $F$.
\qed
\end{definition}

\begin{proposition} \label{P:locinj}
(a) The homomorphism
\[
\lambda:= \prod_v \lambda_v: K_0(\fA, E) \to \prod_v K_0(\fA_v, E_v)
\]
is injective.

(b) If $F$ has no real places or if $G$ admits no irreducible
symplectic characters, then the homomorphism
\[
\lambda_f:= \prod_{v \nmid \infty} \lambda_v: K_0(\fA, E) \to
\prod_{v \nmid \infty} K_0(\fA_v, E_v) 
\]
is injective.

(c) The image of $\lambda$ lies in the idele group $J(K_0(\fA, E))$.
\end{proposition}

\begin{proof} 
(a) Suppose that $\alpha \in K_0(\fA, E)$ lies in the kernel
  of $\lambda$, and let 
\[
[(x_v)_v, x_\infty] \in J(K_1(FG)) \times \Det(EG)^{\times}
\] 
be a representative of $\alpha$. Then for each $v$, we have
\begin{equation} \label{E:locunit}
x_v \cdot \loc_v(x_\infty) \in \Det(\fA_v)^{\times} \subseteq
\Det(F_{v}G)^{\times}.
\end{equation}
Since $x_v \in \Det(F_vG)^{\times} \subseteq Z(F_vG)^{\times}$, we see
that $\loc_v(x_{\infty}) \in Z(F_vG)^{\times}$ for each $v$. Hence
$x_{\infty} \in Z(FG)^{\times}$, and so via the Hasse--Schilling norm
theorem (see \cite[Theorem 7.6]{SE} or \cite[Theorem 7.8]{CR1}) we
deduce that $x_{\infty} \in \Det(FG)^{\times}$. Hence $\alpha$ is also
represented by the idele
\[
[(\loc_v(x_{\infty}))_{v}, x^{-1}_{\infty}] \cdot [(x_{v})_{v},
  x_{\infty}] = [(x_{v} \cdot \loc_{v}(x_{\infty}))_{v},1],
\]
and now \eqref{E:locunit} and Theorem \ref{T:kdes}(b) imply that $\alpha
= 0$ in $K_0(\fA, E)$. Therefore $\lambda$ is injective, as claimed.

(b) The proof of this assertion is virtually identical to that of part
(a). Using the same notation as in the proof of part (a), we see that
$\loc_v(x_{\infty}) \in \Det(F_vG)^{\times} \simeq Z(F_vG)^{\times}$
for each finite place $v$ of $F$. This implies that $x_{\infty} \in
Z(FG)^{\times}$. Under our hypotheses, we have that $\Det(FG)^{\times}
\simeq Z(FG)^{\times}$, and so $x_{\infty} \in \Det(FG)^{\times}$. The
remainder of the argument proceeds exactly as in the proof of part
(a).

(c) If $\beta = [M,N;\xi] \in K_0(\fA, E)$, then for all but
finitely many places $v$, the isomorphism $\xi_v: M \otimes_{O_F}
E_v \xrightarrow{\sim} N \otimes_{O_F} E_v$ obtained from $\xi$
via extension of scalars from $E$ to $E_v$ restricts to an
isomorphism $M \otimes_{O_F} O_{E_v} \xrightarrow{\sim} N
\otimes_{O_F} O_{E_v}$. Hence, for all but finitely many $v$, we
have that $\lambda_v(\beta) \in K_0(\fA_v, O_{E_v})$, and so
$\lambda(\beta) \in J(K_0(\fA, E))$, as asserted.
\end{proof}


\section{Cohomological classes in relative $K$-groups} \label{S:cc}

Recall that $F$ is a number field and that $G$ is a finite group upon
which $\Omega_F$ acts trivially. In this section we shall explain how
the set of realisable classes $\cR(O_FG) \subseteq \Cl(O_FG)$ may be
studied via imposing local cohomological conditions on elements of the
relative $K$-group $K_0(O_FG, F^c)$.

\begin{definition} \label{D:psi}
We define maps $\bpsi$ and $\bpsi_v$ (for each place $v$ of
$F$) by
\[
\bpsi = \bpsi_G: H^1_t(F,G) \to K_0(O_FG, F^c); \quad [\pi] \mapsto
[O_{\pi}, O_FG; \br_G]
\]
and
\[
\bpsi_v = \bpsi_{G,v} :H^1_t(F_v,G) \to K_0(O_{F_v}G, F_v^c); \quad [\pi_v] \mapsto
     [O_{\pi_v}, O_{F_v}G; \br_G].
\]
We set
\[
\bcR(O_FG):= \Image(\bpsi).
\]
\qed
\end{definition}

\begin{definition} \label{D:psid}
We define the pointed set of ideles $J(H^1_t(F,G))$ of $H^1_t(F,G)$ to
be the restricted direct product over all places $v$ of $F$ of the
pointed sets $H^{1}_{t}(F_v, G)$ with respect to the pointed subsets
$H^{1}_{nr}(F_v, G)$, and we write
\[
\bpsi^{id}: J(H^1_t(F, G)) \to J(K_0(O_FG, F^c))
\]
for the map afforded by the maps $\bpsi_v: H^1_t (F_v, G) \to
K_0(O_{F_v}G, F_v^c)$. \qed
\end{definition}

In general, $\bcR(O_FG)$ is not a subgroup of $K_0(O_FG,
F^c)$. However, although $H^{1}_{nr}(F_v, G)$ is in general merely a
pointed set and not a group, the following result holds.

\begin{proposition} \label{P:unramgp}
Let $v$ be any place of $F$, and write $\bpsi_{v}^{nr}$ for the
restriction of $\bpsi_v$ to $H^{1}_{nr}(F_v,G)$. Then $\Image(\bpsi_{v}^{nr})$
is a subgroup of $K_0(O_{F_v}G, F_v^c)$.
\end{proposition}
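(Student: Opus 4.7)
The strategy follows the proof of Proposition~\ref{P:group} closely. First I would make $\bpsi_v^{nr}$ explicit. By Corollary~\ref{C:dual}(iv), for each $[\pi] \in H^1_{nr}(F_v, G)$ one can choose a generator $a_\pi$ of $O_\pi$ over $O_{F_v}G$ with $\br_G(a_\pi) \in (O_{F_v^c}G)^\times$. Combining Lemma~\ref{L:klocdes} with the reduced-norm isomorphism $K_1(F_v^cG) \xrightarrow{\sim} Z(F_v^cG)^\times$ (valid since $F_v^cG$ is semisimple) yields the identification
\[
K_0(O_{F_v}G, F_v^c) \;\cong\; Z(F_v^cG)^\times/\nrd(K_1(O_{F_v}G)),
\]
under which $\bpsi_v^{nr}([\pi])$ is represented by the coset of $\nrd(\br_G(a_\pi))$.

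Next, I would establish a commutative diagram
\[
\begin{CD}
H^1_{nr}(F_v, G) @>{\bpsi_v^{nr}}>> K_0(O_{F_v}G, F_v^c) \\
@VV{\mathrm{ab}}V @|\\
H^1_{nr}(F_v, G^{ab}) @>{\bar{\bpsi}}>> K_0(O_{F_v}G, F_v^c),
\end{CD}
\]
in which $\mathrm{ab}$ is the abelianization and $\bar{\bpsi}$ is a group homomorphism. Using Proposition~\ref{P:abres}, one identifies $H^1_{nr}(F_v, G^{ab}) \cong Z(O_{F_v}G)^\times \backslash \cH(Z(O_{F_v}G))$; then $\bar{\bpsi}$ would be defined by sending the class of $\alpha \in H(Z(O_{F_v}G))$ to the class of $\alpha$ in $Z(F_v^cG)^\times/\nrd(K_1(O_{F_v}G))$. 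Well-definedness requires that both $G^{ab}$ and $Z(O_{F_v}G)^\times$ be swallowed by $\nrd(K_1(O_{F_v}G))$: the first is immediate from the identification $G^{ab} = \nrd(G)$ together with $G \subseteq (O_{F_v}G)^\times \subseteq K_1(O_{F_v}G)$; the second would be handled by exhibiting every central unit as the reduced norm of a suitable $K_1(O_{F_v}G)$-class. Commutativity of the diagram is then a direct consequence of the commutative diagram~\eqref{E:unramresdiag} appearing in Proposition~\ref{P:group}.

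Exactly as in the proof of Proposition~\ref{P:group}, the topological cyclicity of $\Gal(F_v^{nr}/F_v)$ forces $\mathrm{ab}$ to be surjective. Combining this with the commutative diagram yields $\Image(\bpsi_v^{nr}) = \Image(\bar{\bpsi})$, and the latter is a subgroup of $K_0(O_{F_v}G, F_v^c)$ because $\bar{\bpsi}$ is a homomorphism of groups.

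The main obstacle I anticipate is the verification of the well-definedness of $\bar{\bpsi}$, concretely the inclusion $Z(O_{F_v}G)^\times \subseteq \nrd(K_1(O_{F_v}G))$. This is where the passage from the natural denominator $Z(O_{F_v}G)^\times \cdot G^{ab}$ (appearing in Proposition~\ref{P:group}) to the finer denominator $\nrd(K_1(O_{F_v}G))$ (natural for the $K_0$-description) must be controlled; once this is in place, the remainder of the argument is a direct transcription of Proposition~\ref{P:group}'s proof into the relative $K$-theoretic setting.
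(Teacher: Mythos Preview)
Your approach is essentially the paper's, which simply cites Proposition~\ref{P:group} and Lemma~\ref{L:klocdes} without further elaboration. You have correctly unpacked this and correctly isolated the one non-obvious step---the inclusion $Z(O_{F_v}G)^{\times}\subseteq\nrd(K_1(O_{F_v}G))$---which is needed to pass from the target $Z(O_{F_v}G)^{\times}\backslash\cH(Z(O_{F_v}G))$ of Proposition~\ref{P:group} to $K_0(O_{F_v}G,F_v^c)\simeq Z(F_v^cG)^{\times}/\nrd(K_1(O_{F_v}G))$; the paper's terse proof does not address this point explicitly either. One small correction: the lower horizontal arrow in diagram~\eqref{E:unramresdiag} is only asserted to be injective, so you should say that $H^1_{nr}(F_v,G^{ab})$ embeds into (rather than is isomorphic to) $Z(O_{F_v}G)^{\times}\backslash\cH(Z(O_{F_v}G))$; this does not affect the argument, since you only need that the image of the embedding is a subgroup.
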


\begin{proof} 
If $v$ is infinite, then $H^{1}_{nr}(F_v, G) = 0$, and so
$\Image(\bpsi_{v}^{nr}) = 0$.  For finite $v$, the result follows from
Proposition \ref{P:group} and Lemma \ref{L:klocdes}.
\end{proof}

\begin{definition} \label{D:cc}
We say that an element $x \in K_0(O_FG, F^c)$ is
\textit{cohomological} (respectively \textit{cohomological at $v$}) if
$x \in \Image(\bpsi)$ (respectively $\lambda_v(x) \in
\Image(\bpsi_v)$). We say that $x$ is \textit{locally cohomological}
if $x$ is cohomological at $v$ for all places $v$ of $F$. We write
\[
\LC(O_FG):= \lambda^{-1}(\Image(\bpsi^{id}))
\]
for the subset of $K_0(O_FG,F^c)$ consisting of locally cohomological
elements.
\qed
\end{definition}

The long exact sequence of relative $K$-theory \eqref{E:RKES} applied
to $K_0(O_FG, F^c)$ yields a long exact sequence
\begin{equation} \label{E:rkes}
K_1(O_FG) \to K_1(F^cG) \xrightarrow{\partial^1} K_0(O_FG, F^c)
\xrightarrow{\partial^0} \Cl(O_FG) \to 0,
\end{equation}
where $\Cl(O_FG)$ denotes the locally free class group of $O_FG$. We
set
\[
\psi := \partial^0 \circ \bpsi,
\]
and we write
\[
\cR(O_FG): = \Image(\psi).
\]

The second-named author has conjectured that that $\cR(O_FG)$ is
always a subgroup of $\Cl(O_FG)$, and he has proved that this is true
whenever $G$ is abelian (see \cite[Corollary 6.20]{Mc1}). The
following conjecture gives a precise characterisation of the image
$\bcR(O_FG)$ of $\bpsi$.

\begin{conjecture} \label{C:cc}
An element of $K_0(O_FG, F^c)$ is cohomological if and only if it is
locally cohomological. In other words, we have that
\[
\bcR(O_FG) = \LC(O_FG).
\]
\qed
\end{conjecture}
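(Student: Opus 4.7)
The inclusion $\lambda(\bcR(O_FG)) \subseteq \Image(\lambda) \cap \Image(\bpsi^{id})$ is immediate from functoriality. Given $x = \bpsi([\pi])$ with $[\pi] \in H^1_t(F,G)$, the restrictions $[\pi_v] := [\pi|_{\Omega_{F_v}}] \in H^1_t(F_v,G)$ are unramified at all but finitely many $v$ (by tameness of $\pi$), so $(\pi_v)_v$ defines an element of $J(H^1_t(F,G))$. Because the twisted form $(O_{\pi},O_FG;\br_G)$ base-changes to $(O_{\pi,v},O_{F_v}G;\br_G)$ at each $v$, we have $\lambda_v(x) = \bpsi_v([\pi_v])$, and therefore $\lambda(x) = \bpsi^{id}((\pi_v)_v)$.

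For the nontrivial inclusion, suppose $y \in \Image(\lambda) \cap \Image(\bpsi^{id})$, and write $y = \lambda(x)$ for the unique $x$ given by Proposition \ref{P:locinj} and $y = \bpsi^{id}((\pi_v)_v)$ for some idelic tame family. The goal is to produce a global $[\pi] \in H^1_t(F,G)$ with $\bpsi([\pi]) = x$. My plan is to first reduce to the abelianized problem and then solve an embedding problem. Pushing $(\pi_v)_v$ along the quotient $G \twoheadrightarrow G^{ab}$ yields $(\pi^{ab}_v)_v \in J(H^1_t(F,G^{ab}))$; the commutative square \eqref{E:resdiag} together with Proposition \ref{P:abres} identifies its idelic $K$-class with the reduced-norm image of $\lambda(x)$, and Proposition \ref{P:group} guarantees the unramified contributions fit into a group. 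Invoking Theorem \ref{T:C}, the already-proven abelian case, produces a global $[\pi^{ab}] \in H^1_t(F, G^{ab})$ whose idelic realization in the abelian $K$-group matches.

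Second, I would lift $[\pi^{ab}]$ to a global tame $G$-Galois algebra $[\pi]$ whose $K$-class equals $x$. The crucial tool is the local Stickelberger factorisation of Section \ref{S:local}: at each ramified $v$, the resolvend $\br_G(\pi_v)$ splits into an abelian Gauss sum piece (already captured by $\pi^{ab}$) and a correction landing essentially in a commutator factor. One must then demonstrate that these corrections can be globally assembled so that their combined contribution to $K_0(O_FG, F^c)$ reproduces $x$ modulo the image of $\partial^{1}$ in the exact sequence \eqref{E:rkes}. The Stickelberger pairing of Section \ref{S:stickpair}, together with Siviero's approximation theorem from Section \ref{S:det}, supply the global coherence.

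The main obstacle is the embedding problem $1 \to \Ker(G \to G^{ab}) \to G \to G^{ab} \to 1$ with prescribed tame local solutions: one needs a global $[\pi]$ that is tame everywhere, has the prescribed decomposition behaviour at the finitely many ramified places, and is unramified elsewhere. For nilpotent $G$, this can be attacked by induction on the nilpotency class via a Scholz--Reichardt-style construction (which is exactly the strategy used in the proof of Theorem \ref{T:D}, and explains the hypotheses on $|G^{ab}|$ and on real places there). For general finite $G$, the existence of such a tame global solution is substantially more delicate, and would likely require a nonabelian strengthening of Grunwald--Wang or a Neukirch-style realisation theorem for tame extensions with controlled decomposition groups; this, I expect, is the chief barrier to a proof of the conjecture in full generality.
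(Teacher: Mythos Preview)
The statement is labeled as a conjecture in the paper, and the paper does \emph{not} prove it in general; it only establishes the special cases recorded as Theorem~\ref{T:ab} (abelian $G$) and Theorem~\ref{T:nil} (nilpotent $G$ under the stated coprimality and real-places hypotheses). There is therefore no proof of the full conjecture in the paper against which to compare your proposal.

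Your easy inclusion is correct and is exactly what the paper uses implicitly. For the hard inclusion you are candid that what you have is a strategy rather than a proof, and that the embedding step is the genuine obstruction; that diagnosis is accurate. However, your proposed reduction differs from the paper's in a way that matters even in the nilpotent case. You pass in one step along $G \to G^{ab}$, which leaves you with the embedding problem for $1 \to G' \to G \to G^{ab} \to 1$; since $G'$ is not central (already for nilpotent $G$ of class $\geq 3$), the lifting machinery of Section~\ref{S:emb} does not apply, and your sketch would not by itself recover Theorem~\ref{T:nil}. The paper instead inducts on $|G|$ via $\ov{G} = G/C$ with $C$ central of prime order, so that at each stage the embedding problem is a central extension by a cyclic group and Proposition~\ref{P:nlift} furnishes a tame global lift with prescribed inertia. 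A further inaccuracy: your description of the Stickelberger factorisation as splitting the local resolvend into ``an abelian Gauss sum piece \dots and a correction landing essentially in a commutator factor'' is not what Section~\ref{S:local} does. The factorisation \eqref{E:stickfac} is into an \emph{unramified} resolvend times the totally ramified piece $\br_G(\vphi_{v,s})$, and it is this unramified/ramified decomposition, together with the conjugacy-class rigidity of Proposition~\ref{P:reseq}, that drives the matching argument in Section~\ref{S:nil}.
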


Let us now explain why Conjecture \ref{C:cc} implies that $\cR(O_FG)$
is a subgroup of $\Cl(O_FG)$.  In order to do this, we shall require
the following result which is equivalent to a theorem of the
second-named author when $G$ is abelian, and whose proof relies on
results contained in \cite{Mc1} and \cite{Mc2}. Before stating the
result, we remind the reader that $\prod_v \Image(\bpsi^{nr}_{v})$ is
not merely a pointed set, but is in fact a subgroup of $J(K_0(O_FG,
F^c))$ (see Proposition \ref{P:unramgp}).

\begin{theorem} \label{T:idgp}
Let 
\[
\ov{\bpsi^{id}}: J(H^1_t(F, G)) \to \frac{J(K_0(O_FG, F^c))}{\lambda[
\partial^{1} (K_1(F^cG))] \cdot \prod_v \Image(\bpsi^{nr}_{v})}
\]
denote the map of pointed sets given by the composition of the map
$\bpsi^{id}$ with the quotient homomorphism
\[
J(K_0(O_FG, F^c)) \to \frac{J(K_0(O_FG, F^c))}{\lambda
[\partial^{1} (K_1(F^cG))] \cdot \prod_v \Image(\bpsi^{nr}_{v})}.
\]
Then the image of $\ov{\bpsi^{id}}$ is in fact a group. Hence it
follows that
\[
\lambda[\partial^{1}(K_1(F^cG))] \cdot \Image(\bpsi^{id})
\]
is a subgroup of $J(K_0(O_FG, F^c))$.
\end{theorem}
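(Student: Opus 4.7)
The plan is to use the Stickelberger factorisation from Section \ref{S:local} together with properties of the Stickelberger pairing from Section \ref{S:stickpair} to show that, modulo the denominator in the statement, the image of $\bpsi^{id}$ coincides with the image of a genuine group homomorphism and is therefore a subgroup.

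First, I would describe $\bpsi_v$ explicitly via the Stickelberger factorisation (Definition \ref{D:stickfac}). Using Lemma \ref{L:klocdes}, identify $K_0(O_{F_v}G, F_v^c)$ with $K_1(F_v^cG)/K_1(O_{F_v}G)$, so that $\bpsi_v([\pi_v])$ is represented by the resolvend $\br_G(a_v)$ of a normal integral basis generator $a_v$ of $O_{\pi_v}$ (which exists by tameness). Stickelberger factorisation then yields a decomposition $\br_G(a_v) = u_v \cdot s_G(\pi_v)$, where $u_v$ represents a class in $\Image(\bpsi_v^{nr})$ via Theorem \ref{T:reskey}(b), and $s_G(\pi_v)$ is a canonical Stickelberger resolvend whose reduced norm is computed by the Stickelberger pairing (Proposition \ref{P:stickpair}).

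Second, I would combine these local factorisations globally and pass to reduced norms. Using the commutative diagram \eqref{E:resdiag} together with Proposition \ref{P:abres}, the Stickelberger part $s_G(\pi_v)$ depends on $\pi_v$ only through its image $[\pi_v^{ab}]$ in $H^1(F_v, G^{ab})$, modulo globally rational data that is absorbed into $\lambda \circ \partial^1(K_1(F^cG))$ by Hilbert 90. Bilinearity of the Stickelberger pairing then yields a group homomorphism
\[
\Theta: J(H^1_t(F, G^{ab})) \longrightarrow \frac{J(K_0(O_FG, F^c))}{\lambda \circ \partial^1(K_1(F^cG)) \cdot \prod_v \Image(\bpsi_v^{nr})}
\]
such that $\ov{\bpsi^{id}}$ factors as $\Theta$ composed with the abelianization map $J(H^1_t(F, G)) \to J(H^1_t(F, G^{ab}))$. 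In particular $\Image(\ov{\bpsi^{id}}) \subseteq \Image(\Theta)$.

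Third, I would establish the reverse containment $\Image(\Theta) \subseteq \Image(\ov{\bpsi^{id}})$: every element of $\Image(\Theta)$ must be hit by some tame $G$-cocycle via $\bpsi^{id}$. This is the main obstacle of the argument; it relies on Siviero's approximation theorem from Section \ref{S:det} together with explicit resolvend constructions to produce, for any prescribed abelian Stickelberger datum, a global tame $G$-cocycle realising it modulo the denominator. Once this is established, $\Image(\ov{\bpsi^{id}}) = \Image(\Theta)$ is the image of a group homomorphism, hence a subgroup. The final assertion that $\lambda \circ \partial^1(K_1(F^cG)) \cdot \Image(\bpsi^{id})$ is a subgroup of $J(K_0(O_FG, F^c))$ is then immediate, being the preimage under the quotient map of the subgroup $\Image(\ov{\bpsi^{id}})$.
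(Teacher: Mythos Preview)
Your second step contains the main gap. The Stickelberger part $\br_G(\vphi_{v,s})$ does \emph{not} depend on $\pi_v$ only through $[\pi_v^{ab}]$: by Proposition~\ref{P:reseq}(b) (equivalently \eqref{E:phivalue} and Corollary~\ref{C:stick}), the element $\Det(\br_G(\vphi_{v,s}))$ determines the full conjugacy class $c(s)$ of the inertia generator $s=\pi_v(\sigma_v)$ in $G$, not merely its image in $G^{ab}$. Two elements of $G$ with the same image in $G^{ab}$ but distinct conjugacy classes yield distinct Stickelberger data, and this discrepancy is a genuine power of the local uniformiser $\vp_v$, which cannot be absorbed into $\lambda\circ\partial^1(K_1(F^cG))$ (a global object) or into $\Image(\bpsi_v^{nr})$ (integral units). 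Relatedly, the Stickelberger pairing is \emph{not} bilinear in the group variable (only linear in the character variable), so there is no homomorphism $\Theta$ out of $J(H^1_t(F,G^{ab}))$ as you describe.

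The paper circumvents this by working not with $G^{ab}$ but with the commutative algebra $\Lambda(FG)=\Map_{\Omega_F}(\cC(G(-1)),F^c)$ of functions on conjugacy classes. The transpose Stickelberger map $K\Theta^t:J(\Lambda(FG))\to J(K_0(O_FG,F^c))$ of \eqref{E:idstick} is a genuine group homomorphism, and by Proposition~\ref{P:reseq}(a) the image of $\ov{\bpsi^{id}}$ equals the image of $K\Theta^t(\bF)$ in the quotient, where $\bF$ is the \emph{set} of prime-like ideles $f_{v,s}$. Siviero's approximation (Proposition~\ref{P:intim}) shows $K\Theta^t$ descends to a homomorphism out of the modified ray class group $\Cl'_{\fa}(\Lambda(O_FG))$, and Proposition~\ref{P:raysurj} (a Chebotarev-type density argument) shows that $\bF$ surjects onto this ray class group. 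Hence the image of $\bF$ coincides with the image of a group under a homomorphism, and is therefore a group. Your third step also misreads the problem: since $\bpsi^{id}$ has idelic source, no global $G$-cocycles need to be produced.
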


This theorem will be proved in Section \ref{S:idgp}. It implies the
following result.

\begin{theorem} \label{T:congp}
If Conjecture \ref{C:cc} holds, then $\cR(O_FG)$ is a subgroup of
$\Cl(O_FG)$.
\end{theorem}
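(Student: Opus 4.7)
The plan is to exploit the injectivity of the localisation map $\lambda : K_0(O_FG, F^c) \to J(K_0(O_FG, F^c))$ provided by Proposition \ref{P:locinj} in order to transport the problem into the idele group, where Theorem \ref{T:idgp} supplies a subgroup that can be intersected with $\Image(\lambda)$, and then to descend back to $\Cl(O_FG)$ via $\partial^{0}$.

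I would first set $\cR^{\sharp}(O_FG) := \bcR(O_FG) \cdot \partial^{1}(K_1(F^cG))$. By the long exact sequence \eqref{E:rkes}, $\partial^{1}(K_1(F^cG))$ is precisely the kernel of the surjection $\partial^{0} : K_0(O_FG, F^c) \to \Cl(O_FG)$, and by definition $\cR(O_FG) = \partial^{0}(\bcR(O_FG))$. Hence $\cR^{\sharp}(O_FG)$ is the full preimage of $\cR(O_FG)$ under $\partial^{0}$, and it suffices to show that $\cR^{\sharp}(O_FG)$ is a subgroup of $K_0(O_FG, F^c)$. Since $\lambda$ is an injective homomorphism, this is equivalent to showing that $\lambda(\cR^{\sharp}(O_FG))$ is a subgroup of $J(K_0(O_FG, F^c))$.

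Next, I would feed in Conjecture \ref{C:cc}, which identifies $\lambda(\bcR(O_FG))$ with $\Image(\lambda) \cap \Image(\bpsi^{id})$. Combining this with the trivial inclusion $\lambda \circ \partial^{1}(K_1(F^cG)) \subseteq \Image(\lambda)$ and the fact that $\Image(\lambda)$ is a subgroup, a short set-theoretic check (pulling any factor already lying in $\Image(\lambda)$ out of the intersection) gives the identity
\[
\lambda\bigl(\cR^{\sharp}(O_FG)\bigr) \;=\; \Image(\lambda) \,\cap\, \Bigl[\, \lambda \circ \partial^{1}(K_1(F^cG)) \cdot \Image(\bpsi^{id}) \,\Bigr].
\]

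Finally, Theorem \ref{T:idgp} asserts precisely that $\lambda \circ \partial^{1}(K_1(F^cG)) \cdot \Image(\bpsi^{id})$ is a subgroup of $J(K_0(O_FG, F^c))$, so intersecting it with the subgroup $\Image(\lambda)$ produces a subgroup, and the chain of reductions above then gives that $\cR(O_FG)$ is a subgroup of $\Cl(O_FG)$. The present argument is essentially a formal manipulation built on two inputs: Conjecture \ref{C:cc} (assumed) and Theorem \ref{T:idgp} (to be proved in Section \ref{S:idgp}); the latter is the genuine obstacle, and it is where the Stickelberger machinery from \cite{Mc1}, \cite{Mc2} enters.
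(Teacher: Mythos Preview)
Your proposal is correct and follows essentially the same approach as the paper: both reduce to showing that $\partial^{1}(K_1(F^cG)) \cdot \bcR(O_FG)$ is a subgroup of $K_0(O_FG,F^c)$, and then identify its image under the injective map $\lambda$ with $\Image(\lambda) \cap \bigl[\lambda \circ \partial^{1}(K_1(F^cG)) \cdot \Image(\bpsi^{id})\bigr]$, which is a subgroup by Theorem \ref{T:idgp}. The paper phrases this last step as identifying $\partial^{1}(K_1(F^cG)) \cdot \bcR(O_FG)$ with the kernel of the composite $K_0(O_FG,F^c) \xrightarrow{\lambda} J(K_0(O_FG,F^c)) \twoheadrightarrow J(K_0(O_FG,F^c))/\bigl[\lambda \circ \partial^{1}(K_1(F^cG)) \cdot \Image(\bpsi^{id})\bigr]$, but this is exactly your displayed identity rewritten via the injectivity of $\lambda$.
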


\begin{proof}
It follows from the exact sequence \eqref{E:rkes} that $\cR(O_FG)$ is
a subgroup of $\Cl(O_FG)$ if and only if $\partial^1(K_1(F^cG)) \cdot
\bcR(O_FG)$ is a subgroup of $K_0(O_FG, F^c)$. However, if Conjecture
\ref{C:cc} is true, then Theorem \ref{T:idgp} implies that 
\begin{equation} \label{E:later}
\partial^1(K_1(F^cG)) \cdot
\bcR(O_FG) = \partial^1(K_1(F^cG)) \cdot \LC(O_FG)
\end{equation} 
is the kernel of the homomorphism
\[
K_0(O_FG, F^c) \xrightarrow{\lambda} J(K_0(O_FG, F^c)) \to
\frac{J(K_0(O_FG, F^c))}{\lambda[\partial^{1} (K_1(F^cG))] \cdot
\Image(\bpsi^{id})},
\]
where the last arrow denotes the obvious quotient homomorphism. This
implies the desired result.
\end{proof}

We conclude this section with the following result on unramified
locally cohomological classes in $K_0(O_FG, F^c)$. This will be used
in the proofs of Theorem \ref{T:domco} and Theorem \ref{T:D} of the
Introduction (see Section \ref{S:odd} below).

\begin{proposition} \label{P:unram}
(a) Let $L$ be the maximal, abelian, everywhere unramified (including at
all infinite places) extension of $F$ of exponent $|G^{ab}|$, and
suppose that $y \in K_0(O_FG, F^c)$ lies in the kernel of the map
\[
\beta: K_0(O_FG, F^c) \xrightarrow{\lambda_F} J(K_0(O_FG, F^c)) \to
\frac{J(K_0(O_FG, F^c))}{\prod_v \Image(\bpsi^{nr}_{v})}.
\]
Then $y$ lies in the kernel of the extension of scalars map
\[
e_L: K_0(O_FG, F^c) \to K_0(O_LG, F^c).
\]
Hence, if $(h^+_F, |G^{ab}|) = 1$ (where $h^+_F$ denotes the narrow
class number of $F$), then $L = F$, and so $\beta$ is injective.

(b) Suppose that $G$ admits no non-trivial irreducible symplectic
characters, or that $F$ has no real places, and that $y \in K_0(O_FG,
F^c)$ lies in the kernel of the map
\[
\beta_f: K_0(O_FG, F^c) \xrightarrow{\lambda_{f,F}} J_f(K_0(O_FG, F^c)) \to
\frac{J_f(K_0(O_FG, F^c))}{\prod_{v \nmid \infty} \Image(\bpsi^{nr}_{v})}.
\]
Then $y$ lies in the kernel of the extension of scalars map
\[
e_M: K_0(O_FG, F^c) \to K_0(O_MG, F^c),
\]
where $M$ is the maximal, abelian, unramified (at all finite places)
extension of $F$ of exponent $|G^{ab}|$.

Hence if $(h_F, |G^{ab}|) = 1$ then $L = F$, and so $\beta_f$ is
injective.
\end{proposition}

\begin{proof}
(a) Suppose that $y = [(y_v), y_{\infty}]$ lies in the kernel of $\beta$,
and let $E/F$ be the smallest Galois extension such that $\Omega_E$
fixes $y_{\infty}$. For each place $v$ of $F$, let $w(v)$ be the place
of $E$ afforded by our fixed choice of embedding $F^c \to F^c_v$.

As $y$ lies in the kernel of $\beta$, we have that $y_v \cdot
\loc_v(y_{\infty}) \in \Image(\bpsi_{v}^{nr})$ for each place $v$. Hence,
for each $v$, $\loc_v(y_{\infty}) \in H(Z(F_vG))$ is an unramified
$G^{ab}$-resolvend over $F_v$ (cf. Proposition \ref{P:abres}). It
follows that, for each $v$, the extension $E_{w(v)}/F_v$ is unramified
and that $[E_{w(v)}:F_v]$ divides $|G^{ab}|$. This implies that $E/F$ is
unramified at all places $v$, and is of exponent dividing
$|G^{ab}|$. Hence $E \subseteq L$, and so $y_{\infty} \in \Det(LG)^{\times}$.

Now since $y_v \cdot \loc_v(y_{\infty}) \in \Image(\bpsi_{v}^{nr})$
for each place $v$, we see that in fact $y_v \cdot \loc_v(y_{\infty}) \in
\Det(O_{L_v}G)^{\times}$. Hence $e_L(y)$ is in the kernel of the
localisation map
\[
\lambda_L: K_0(O_LG, F^c) \to J(K_0(O_LG, F^c)),
\]
and since $\lambda_L$ is injective (see Proposition \ref{P:locinj}(a)) it
follows that $e_L(y) = 0$.

The final assertion now follows immediately.

(b) Virtually identical to the proof of (a), except that here, because
either $G$ admits no irreducible symplectic characters or $F$ has no
real places, we may appeal to the injectivity of the localisation map
$\lambda_{f,M}$ (see Proposition \ref{P:locinj}(b)) rather than that
of $\lambda_{M}$.
\end{proof} 


\section{Local extensions I} \label{S:local}

The goal of this section is to describe how resolvends of normal
integral bases of tamely ramified, non-archimedean local extensions
admit \textit{Stickelberger factorisations} (see Definition
\ref{D:stickfac}). This reflects the fact that every tamely ramified
$G$-extension of $F_v$ is a compositum of an unramified extension of
$F_v$ and a twist of a totally ramified extension of $F_v$. All of the
results in this section are based on unpublished notes of the
second-named author.

For each finite place $v$ of $F$, we fix a uniformiser $\vp_v$ of
$F_v$, and we write $q_v$ for the order of the residue field of
$F_v$. We fix a compatible set of roots of unity $\{ \zeta_m \}$, and
a compatible set $\{ \vp_{v}^{1/m} \}$ of roots of $\vp_v$. So, if $m$ and
$n$ are any two positive integers, then we have $(\zeta_{mn})^m =
\zeta_n$, and $(\vp_{v}^{1/mn})^{m} = \vp_{v}
^{1/n}$.

Recall that $F_{v}^{nr}$ (respectively $F_{v}^{t}$) denotes the
maximal unramified (respectively tamely ramified) extension of
$F_v$. Then
\[
F_{v}^{nr}=  \bigcup_{\stackrel{m \geq 1}{(m,q_v)=1}} F_v(\zeta_m),\quad
F_v^t = \bigcup_{\stackrel{m \geq 1}{(m,q_v)=1}} F_v(\zeta_m, \vp_{v}^{1/m}).
\]
The group $\Omega_{v}^{nr}:=
\Gal(F_{v}^{nr}/F_v)$ is topologically generated by a Frobenius
element $\phi_v$ which may be chosen to satisfy
\[
\phi_v(\zeta_m) = \zeta_{m}^{q_v}, \qquad \phi_v(\vp_{v}^{1/m}) = \vp_{v}^{1/m}
\]
for each integer $m$ coprime to $q_v$. Our choice of compatible roots
of unity also uniquely specifies a topological generator $\sigma_v$ of
$\Gal(F_v^t/F_{v}^{nr})$ by the conditions 
\[
\sigma_v(\vp_{v}^{1/m}) = \zeta_m \cdot \vp_{v}^{1/m}, \qquad
\sigma_v(\zeta_m) = \zeta_m
\]
for all integers $m$ coprime to $q_v$. The group
$\Omega^{t}_{v}:=\Gal(F_{v}^{t}/F_v)$ is topologically generated by
$\phi_v$ and $\sigma_v$, subject to the relation
\begin{equation} \label{E:tamerel}
\phi_v \cdot \sigma_v \cdot \phi_{v}^{-1} = \sigma_{v}^{q_v}.
\end{equation}

While reading the remainder of this section (especially Proposition
\ref{P:phi} below), it may be helpful for the reader to keep in mind
the statement and proof of the following well-known result which
provides some motivation for a number of subsequent constructions.

\begin{proposition} \label{P:kmot}
Set $L:=F_v$. Let $n$ be a positive integer with $(n, q_v) = 1$, and
suppose that $\mu_n \subseteq L$. Set $E = L(\vp_{v}^{1/n})$, $\Gamma
= \Gal(E/L) = \bZ/n\bZ$, and $\beta = \sum_{i=0}^{n-1}
\vp_{v}^{i/n}$. Then $O_E = O_L\Gamma \cdot \beta$.
\end{proposition}

\begin{proof}
We first observe that plainly $O_L\Gamma \cdot \beta \subseteq O_E$,
as $\beta \in O_E$.

Let $\chi$ denote the Kummer character of $\Gamma$, defined by
\[
\chi(\gamma) = \frac{\gamma(\vp_{v}^{1/n})}{\vp_{v}^{1/n}} \in \mu_n
\]
for each $\gamma \in \Gamma$. Then $\wh{\Gamma} = \langle \chi
\rangle$, and for each $0 \leq j \leq n-1$, we have
\begin{align*}
\left( \sum_{\gamma} \chi^j(\gamma)\gamma^{-1} \right) \cdot \beta &= 
\left( \sum_{\gamma} \chi^j(\gamma) \gamma^{-1} \right) \cdot \left(
\sum_{i=0}^{n-1} \vp_{v}^{i/n} \right) \\
&= \sum_{i=0}^{n-1} \left( \sum_{\gamma} \chi^j(\gamma) \cdot
\chi^{-i}(\gamma) \cdot \vp_{v}^{i/n} \right) \\
&= n \cdot \vp_{v}^{j/n}.
\end{align*}

As $n \in O_{L}^{\times}$, we therefore see that
$\{\vp_{v}^{j/n}\}_{j=0}^{n-1} \subseteq O_L\Gamma \cdot \beta$, which
implies that $O_E \subseteq O_L\Gamma \cdot \beta$. This implies the
desired result.
\end{proof}

\begin{definition} \label{D:admiss}
For each finite place $v$ of $F$, we define
\[
\Sigma_v(G) := \{ s \in G \mid s^{q_v} \in c(s) \}
\]
(recall that $c(s)$ denotes the conjugacy class of $s$ in
$G$). Plainly if $s \in \Sigma_v(G)$, then both $c(s)$ and $\langle s
\rangle$ are subsets of $\Sigma_v(G)$. Let us also remark that if $s
\in \Sigma_v(G)$, then the order $|s|$ of $s$ is coprime to
$q_v$. \qed
\end{definition}

\begin{definition} \label{D:ramphi}
If $s \in G$, we set
\[
\beta_s := \frac{1}{|s|} \sum_{i=0}^{|s|-1} \vp_{v}^{i/|s|};
\]
note that $\beta_s$ depends only upon $|s|$, and so in particular we
have 
\[
\beta_{s} = \beta_{g^{-1}sg}
\]
for every $g \in G$.  We define $\vphi_{v,s} \in \Map(G, O_{F_v^c})$
by setting
\[
\vphi_{v,s}(g) = 
\begin{cases}
\sigma_{v}^{i}(\beta_s)   &\text{if $g = s^i$;} \\
0     &\text{if $g \notin \langle s \rangle$.}
\end{cases}
\]
Then
\begin{equation} \label{E:locres}
\br_G(\vphi_{v,s}) = \sum_{i=0}^{|s|-1} \vphi_{v,s}(s^i) s^{-i} =
\sum_{i=0}^{|s|-1} \sigma_{v}^{i}(\beta_s) s^{-i}.
\end{equation}
We note that for each $g \in G$, we have
\begin{equation} \label{E:conjres}
\br_G(\vphi_{v,g^{-1} s g}) = g^{-1} \cdot \br_G(\vphi_{v,s}) \cdot g,
\end{equation}
and so
\begin{equation} \label{E:keydet}
\Det(\br_G(\vphi_{v,g^{-1} s g})) = \Det(\br_G(\vphi_{v,s})),
\end{equation}
i.e. the element $\Det(\br_G(\vphi_{v,s}))$ depends only upon the
conjugacy class $c(s)$ of $s$ in $G$.  We remark that it will be shown
later as a consequence of properties of the Stickelberger pairing that
$\Det(\br_G(\vphi_{v,s}))$ in fact determines the subgroup $\langle s
\rangle$ of $G$ up to conjugation (see Remark \ref{R:nrd} and
Proposition \ref{P:reseq}(b)).

We shall see that generators of inertia subgroups of tame Galois
$G$-extensions of $F_v$ lie in $\Sigma_v(G)$, and that the elements
$\vphi_{v,s}$ for $s \in G$ with $(|s|, q_v) =1$ may be used to
construct normal integral basis generators of tame (and of course
totally ramified) Galois $G$-extensions of $F_{v}^{nr} $.  \qed
\end{definition}

In order to ease notation, we shall now set $L:= F_v$ and $O:=O_L$,
and we shall drop the subscript $v$ from our notation for the rest of
this section.
\medskip

Suppose now that $L_\pi/L$ is a tamely ramified Galois $G$-extension
of $L$, corresponding to $\pi \in \Hom(\Omega^{t}, G)$. We are going
to describe the second-named author's decomposition of resolvends of
normal integral basis generators of $L_\pi/L$ (see \cite{Mc2} and also
\cite[Section 6]{B}). When $G$ is abelian, this decomposition is an
analogue of a version of Stickelberger's factorisation of Gauss sums.

Write $s:= \pi(\sigma)$, $t:= \pi(\phi)$; then $t \cdot s \cdot t^{-1}
= s^q$, and so $s \in \Sigma(G)$. We define $\pi_r, \pi_{nr}
\in \Map(\Omega^t,G)$ by setting
\begin{align}
&\pi_r(\sigma^m \phi^n) = \pi(\sigma^m) = s^m , \label{E:rmap} \\
&\pi_{nr}(\sigma^m \phi^n) = \pi(\phi^n) = t^n. \label{E:nrmap}
\end{align}
If $\omega_i \in \Omega^t$ ($i=1,2$) with $\omega_i = \sigma^{m_i}
\cdot \phi^{n_i}$, then a straightforward calculation using
\eqref{E:tamerel} shows that
\[
\omega_1 \cdot \omega_2 = \sigma^{m_1 + m_{2}q^{n_{1}}} \cdot \phi^{n_{1} + n_{2}}.
\]
This implies that $\pi_{nr} \in \Hom(\Omega^{nr},G)$. Plainly we have 
\begin{equation} \label{E:pifac}
\pi(\omega)= \pi_r(\omega) \cdot \pi_{nr}(\omega)
\end{equation}
for every $\omega = \sigma^m \cdot \phi^n \in \Omega^t$.  The map
$\pi_{nr} \in \Hom(\Omega^{nr}, G)$ corresponds to an unramified
Galois $G$-extension $L_{\pi_{nr}}$ of $L$ (see Remark \ref{R:dbend}
below for a more detailed discussion of this point). Since
$L_{\pi_{nr}}/L$ is unramified, $O_{\pi_{nr}}$ is a free
$O_LG$-module. Let $a_{nr}$ be any normal integral basis generator of
this extension. Note that $\br_G(a_{nr}) \in H(OG)$, because
$L_{\pi_{nr}}/L$ is unramified (see Corollary \ref{C:dual}(iv)).

\begin{definition} \label{D:gnr}
Let $G(\pi_{nr})$ denote the group $G$ with $\Omega^t$-action given
by
\[
\omega(g) = \pi_{nr}(\omega) \cdot g \cdot \pi_{nr}(\omega)^{-1}
\] 
for $\omega \in \Omega^t$ and $g \in G$. \qed
\end{definition}

\begin{lemma} \label{L:cocycle}
The map $\pi_r$ is a $G(\pi_{nr})$-valued 1-cocycle of
  $\Omega^t$.
\end{lemma}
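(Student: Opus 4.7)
The plan is to verify the cocycle identity
\[
\pi_r(\omega_1 \omega_2) = \pi_r(\omega_1) \cdot \omega_1\bigl(\pi_r(\omega_2)\bigr),
\]
where the action on the right-hand side is the twisted one, namely
$\omega_1(g) = \pi_{nr}(\omega_1) \cdot g \cdot \pi_{nr}(\omega_1)^{-1}$, and then to observe continuity.

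The first thing I would do is check that $\pi_{nr}$, when viewed as a map $\Omega^t \to G$ (and not merely on the unramified quotient), is itself a homomorphism. Writing $\omega_i = \sigma^{m_i} \phi^{n_i}$ for $i=1,2$, the multiplication rule
\[
\omega_1 \cdot \omega_2 = \sigma^{m_1 + n_2 q^{n_1}} \cdot \phi^{n_1+n_2}
\]
recorded just above Lemma \ref{L:cocycle} shows at once that $\pi_{nr}(\omega_1 \omega_2) = t^{n_1+n_2} = \pi_{nr}(\omega_1) \cdot \pi_{nr}(\omega_2)$. So $\pi_{nr}: \Omega^t \to G$ is a homomorphism; equivalently, $\pi_{nr}$ factors through the projection $\Omega^t \to \Omega^{nr}$.

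With this in hand, the cocycle identity follows by a one-line manipulation. Since $\pi \in \Hom(\Omega^t, G)$, the factorisation \eqref{E:pifac} gives
\[
\pi_r(\omega_1 \omega_2) \cdot \pi_{nr}(\omega_1 \omega_2)
 = \pi(\omega_1) \cdot \pi(\omega_2)
 = \pi_r(\omega_1) \cdot \pi_{nr}(\omega_1) \cdot \pi_r(\omega_2) \cdot \pi_{nr}(\omega_2).
\]
Cancelling $\pi_{nr}(\omega_1 \omega_2) = \pi_{nr}(\omega_1) \cdot \pi_{nr}(\omega_2)$ from the right yields
\[
\pi_r(\omega_1 \omega_2) = \pi_r(\omega_1) \cdot \bigl[\pi_{nr}(\omega_1) \cdot \pi_r(\omega_2) \cdot \pi_{nr}(\omega_1)^{-1}\bigr],
\]
which is precisely the $1$-cocycle condition for the $\Omega^t$-module $G(\pi_{nr})$.

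The only remaining point is continuity of $\pi_r$, which is automatic: $\pi_r(\omega) = \pi(\omega) \cdot \pi_{nr}(\omega)^{-1}$ is a pointwise product of two continuous maps $\Omega^t \to G$, hence continuous. There is genuinely no obstacle here; the content of the lemma is the bookkeeping check that the natural factorisation of a homomorphism through the Frobenius-and-inertia decomposition breaks $\pi$ into a homomorphism piece $\pi_{nr}$ and a twisted cocycle piece $\pi_r$, and the only thing that needs noting (and which I would flag as the "main" step) is that $\pi_{nr}$ really is a homomorphism on all of $\Omega^t$ rather than merely on $\Omega^{nr}$, which is exactly what the multiplication formula \eqref{E:tamerel} delivers.
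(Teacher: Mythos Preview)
Your proof is correct and follows essentially the same approach as the paper: both use the factorisation $\pi = \pi_r \cdot \pi_{nr}$ together with the fact that $\pi_{nr}$ is a homomorphism on $\Omega^t$ to obtain the cocycle identity by a direct manipulation. The paper had already recorded the multiplicativity of $\pi_{nr}$ just before the lemma, so your re-verification of that point and your remark on continuity simply make explicit what the paper leaves as understood.
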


\begin{proof} Suppose that $\omega_1, \omega_2 \in \Omega^t$. Then
  since $\pi_{nr} \in \Hom(\Omega^{nr}, G)$ and $\pi = \pi_r
  \cdot \pi_{nr}$, a straightforward calculation shows that
\[
\pi_r(\omega_1 \omega_2) = \pi_r(\omega_1) \cdot
\pi_{nr}(\omega_1) \cdot \pi_{r}(\omega_2) \cdot \pi_{nr}(\omega_1)^{-1},
\]
and this establishes the desired result.
\end{proof} 

\begin{definition} \label{D:gnr1}
We write $^{\pi_r} G(\pi_{nr})$ for the set $G$ endowed with the
following action of $\Omega^t$: for every $g \in G$ and $\omega \in
\Omega^t$ we have
\[
g^{\omega} = \pi_r(\omega) \cdot \pi_{nr}(\omega) \cdot g \cdot \pi_{nr}(\omega)
^{-1}.
\]
Lemma \ref{L:cocycle} implies that if $\omega_1, \omega_2 \in
\Omega^t$, then
\[
g^{(\omega_1 \omega_2)} = (g^{\omega_2})^{\omega_1}.
\]
 We set
\[
L_{\pi_r}(\pi_{nr}):= \Map_{\Omega^t}(^{\pi_r} G(\pi_{nr}), L^t).
\]
The algebra $(L^tG(\pi_{nr}))^{\Omega^t}$ acts on $L_{\pi_r}(\pi_{nr})$
via the rule
\[
(\alpha \cdot a)(h) = \sum_{g \in G} \alpha_g \cdot a (h \cdot g)
\]
for all $h \in G$ and $\alpha = \sum_{g \in G} \alpha_g \cdot g \in
(L^tG(\pi_{nr}))^{\Omega^t}$. \qed
\end{definition}

\begin{proposition} \label{P:phi}
(a) Recall that $s \in \Sigma(G)$. We have that $\vphi_s \in
  L_{\pi_r}(\pi_{nr})$.

(b) Set
\[
\fA(\pi_{nr}) = (O_{L^c}G(\pi_{nr}))^{\Omega^t},
\] 
and let
  $O_{\pi_r}(\pi_{nr})$ be the integral closure of $O_L$ in
  $L_{\pi_r}(\pi_{nr})$. Then
\[
\fA(\pi_{nr}) \cdot \vphi_s = O_{\pi_r}(\pi_{nr}).
\]

(c) For any $\alpha_r \in L_{\pi_r}(\pi_{nr})$ and $\omega \in
\Omega^t$, we have
\[
\br_G(\alpha_r)^{\omega} = \pi_{nr}(\omega)^{-1} \cdot
\br_G(\alpha_r) \cdot \pi(\omega).
\]
\end{proposition}

\begin{proof}
(a) Suppose that $\omega = \sigma^m \cdot \phi^n \in \Omega^t$. If $g
  \in G$  and $g \notin \langle s \rangle$, then we have that
\[
\vphi_s(g^{\omega}) = 0 = \vphi_s(g)^{\omega}.
\]
On the other hand, we also have
\begin{align*}
\vphi_s((s^i)^{\omega}) &= \vphi_s((s^i)^{\sigma^m \phi^{n}}) \\
&= \vphi_s(s^m \cdot t^n \cdot s^i \cdot t^{-n}) \\
&= \vphi_s(s^{m + iq^{n}}) \\
&= \sigma^{m + iq^{n}}(\beta_s) \\
&= (\sigma^m \cdot \phi^n) \cdot \sigma^i(\beta_s) \\
&= \vphi_s(s^i)^{\omega}.
\end{align*}
Hence $\vphi_s \in L_{\pi_r}(\pi_{nr})$, as claimed.

(b) The proof of this assertion is very similar to that of \cite[Lemma
  6.6]{B}, which is in turn an analogue of \cite[5.4]{Mc1}.

Set $H = \langle s \rangle$. Then $\Omega^t$ acts transitively on
$^{\pi_{r}} H(\pi_{nr}) \subseteq ^{\pi_{r}} G(\pi_{nr}) $, and so the
algebra
\[
L_{\pi_r}(\pi_{nr})^H:= \Map_{\Omega^t}(^{\pi_{r}}H(\pi_{nr}), L^t)
\]
may be identified with a subfield of $L^t$ via identifying $b \in
L_{\pi_r}(\pi_{nr})^H$ with $x_b = b(\1) \in L^t$. We have that
\[
x_{b}^{\sigma^m} = b(s^m), \quad x_{b}^{\phi} = x_b,
\]
and so it follows that $L_{\pi_r}(\pi_{nr})^H$ is the subfield of
$L^t$ consisting of those elements of $L^t$ that are fixed by both
$\phi$ and $\sigma^{|s|}$. This implies that $L_{\pi_r}(\pi_{nr})^H =
L[\vp^{1/|s|}]$ (which in general will not be normal over $L$), and that
the integral closure of $O_L$ in $L_{\pi_r}(\pi_{nr})^H$ is equal to
$O_L[\vp^{1/|s|}]$. Plainly $\beta_s \in O_L[\vp^{1/|s|}]$ (as $|s|$ is
invertible in $O_L$), and the element $\beta_s$ corresponds to the
element $\vphi_s|_H \in L_{\pi_r}(\pi_{nr})^H$.

If we set $\fA(\pi_{nr})_H:= (O_{L^t}H(\pi_{nr}))^{\Omega^t}$, then
for each integer $k$ with $0 \leq k \leq |s|-1$, it is not hard to
check that
\[
\left( \sum_{i=0}^{|s|-1} \zeta_{|s|}^{-ki} s^i \right)^{\phi}
=
 \sum_{i=0}^{|s|-1} \zeta_{|s|}^{-ki} s^i,
\]
and so we see that
\[
\sum_{i=0}^{|s|-1} \zeta_{|s|}^{-ki} s^i \in \fA(\pi_{nr})_H.
\]
A straightforward computation (cf. \cite[5.4]{Mc1}) also shows that
\[
\left( \sum_{i=0}^{|s|-1} \zeta_{|s|}^{-ki} s^i \right) \cdot
\beta_s = \vp^{k/|s|}.
\]
It therefore follows that $\fA(\pi_{nr})_H \cdot \beta_s = O_L[\vp^{1/|s|}]$,
and this in turn implies that
\[
\fA(\pi_{nr}) \cdot \vphi_s = O_{\pi_{r}}(\pi_{nr}),
\]
as asserted.

(c) We have
\begin{align*}
\br_G(\alpha_r)^{\omega} &= \sum_{g \in G} \alpha_r(g)^{\omega} \cdot
g^{-1} \\
&= \sum_{g \in G} \alpha_r(g^{\omega}) \cdot g^{-1} \\
&= \sum_{g \in G} \alpha_r( \pi_r(\omega) \cdot \pi_{nr}(\omega) \cdot
g \cdot \pi_{nr}^{-1}(\omega)) \cdot g^{-1} \\
&= \sum_{g \in G} \alpha_r(g) \cdot \pi_{nr}(\omega)^{-1} \cdot g^{-1} \cdot
\pi_r(\omega) \cdot \pi_{nr}(\omega) \\
&= \pi_{nr}(\omega)^{-1} \cdot \br_G(\alpha_r) \cdot \pi(\omega),
\end{align*}
as claimed.
\end{proof}

\begin{corollary} \label{C:genfac}
For any $\alpha_{r} \in L_{\pi_r}(\pi_{nr})$ and
  $\alpha_{nr} \in L_{\pi_{nr}}$, there is a unique $\alpha \in
  L_{\pi}$ such that
\[
\br_G(\alpha_{nr}) \cdot \br_G(\alpha_r) = \br_G(\alpha).
\]
\end{corollary}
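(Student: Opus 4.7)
The plan is to define $\alpha$ directly and then verify that it lies in $L_\pi$. Since the resolvend map $\br_G : \Map(G, L^c) \to L^c G$ is a bijection (an $L^c G$-module isomorphism, as recalled in Section \ref{SS:resolvends}), uniqueness is automatic: the only candidate is
\[
\alpha := \br_G^{-1}\bigl(\br_G(\alpha_{nr}) \cdot \br_G(\alpha_r)\bigr) \in \Map(G, L^c).
\]
So the whole content of the statement is to check that this $\alpha$ actually lies in $L_\pi = \Map_{\Omega^t}(^\pi G, L^c)$, or equivalently that
\[
\br_G(\alpha)^\omega = \br_G(\alpha) \cdot \pi(\omega)
\]
for all $\omega \in \Omega^t$, which is precisely the characterisation of resolvends of elements of $L_\pi$ recorded in Section \ref{SS:resolvends}.

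To verify this I would compute the Galois action on the product $\br_G(\alpha_{nr}) \cdot \br_G(\alpha_r)$ factor by factor. Since $\alpha_{nr} \in L_{\pi_{nr}}$, the same characterisation gives
\[
\br_G(\alpha_{nr})^\omega = \br_G(\alpha_{nr}) \cdot \pi_{nr}(\omega),
\]
while Proposition \ref{P:phi}(c) applied to $\alpha_r \in L_{\pi_r}(\pi_{nr})$ yields
\[
\br_G(\alpha_r)^\omega = \pi_{nr}(\omega)^{-1} \cdot \br_G(\alpha_r) \cdot \pi(\omega).
\]
Multiplying these relations together produces a cancellation of the $\pi_{nr}(\omega)$ factors in the middle and gives
\[
\bigl[\br_G(\alpha_{nr}) \cdot \br_G(\alpha_r)\bigr]^\omega = \br_G(\alpha_{nr}) \cdot \br_G(\alpha_r) \cdot \pi(\omega),
\]
which is the desired cocycle relation. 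Note that this is consistent with the factorisation \eqref{E:pifac}, $\pi = \pi_r \cdot \pi_{nr}$, in the sense that the twist by $\pi_{nr}$ coming from $\alpha_{nr}$ exactly absorbs the twist of $\br_G(\alpha_r)$ on the left predicted by Proposition \ref{P:phi}(c), leaving behind the full cocycle $\pi(\omega)$ on the right.

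I do not anticipate any real obstacle: once Proposition \ref{P:phi}(c) and the basic formalism of Section \ref{SS:resolvends} are in hand, the argument is essentially formal. The only subtlety worth flagging is that $\br_G$ is an isomorphism of $L^c$-modules and not of algebras, so one must construct $\alpha$ by pulling the product of resolvends back through $\br_G^{-1}$, rather than by any pointwise multiplication of $\alpha_{nr}$ and $\alpha_r$ in $\Map(G, L^c)$.
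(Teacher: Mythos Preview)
Your proof is correct and follows essentially the same route as the paper: compute the $\Omega^t$-action on the product $\br_G(\alpha_{nr})\cdot\br_G(\alpha_r)$ using Proposition~\ref{P:phi}(c) together with the resolvend characterisation of $L_{\pi_{nr}}$, observe that the $\pi_{nr}(\omega)$ factors cancel to leave the cocycle relation for $\pi$, and then invoke bijectivity of $\br_G$ for existence and uniqueness of $\alpha$. Your version is in fact slightly cleaner in that you appeal directly to the characterisation of $L_\pi$ via resolvends rather than to membership in $H(LG)$, which nominally requires invertibility not assumed in the statement.
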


\begin{proof} Proposition \ref{P:phi}(c) implies that, for any $\omega
  \in \Omega^t$, we have
\[
[\br_G(\alpha_{nr}) \cdot \br_G(\alpha_r)]^{\omega} =
\br_G(\alpha_{nr}) \cdot \br_G(\alpha_r) \cdot \pi(\omega),
\]
and so $\br_G(\alpha_{nr}) \cdot \br_G(\alpha_r) \in H(LG)$. As the
map $\br_G$ is bijective, it follows that there is a unique $\alpha
\in \Map(G, L^c)$ such that
\[
\br_G(\alpha_{nr}) \cdot \br_G(\alpha_r) = \br_G(\alpha),
\]
and that $\alpha \in L_{\pi}$.
\end{proof}

\begin{theorem} \label{T:stickfac}
If $a_{nr} \in L_{\pi_{nr}}$ is any normal integral basis generator of
$L_{\pi_{nr}}/L$, then the element $a \in L_{\pi}$ defined by
\begin{equation} \label{E:stickfac1}
\br_G(a_{nr}) \cdot \br_G(\vphi_s) = \br_G(a)
\end{equation}
is a normal integral basis generator of $L_{\pi}/L$.
\end{theorem}

\begin{proof}
The proof of this assertion is very similar to that of the analogous
result in the abelian case described in \cite[(5.7), page 283]{Mc1}.
We first observe that plainly $O_LG \cdot a \subseteq O_{\pi}$ because
$a_{nr} \in O_{\pi_{nr}}$ and $\vphi_s \in
O_{\pi_r}(\pi_{nr})$. Hence, to prove the desired result, it suffices
to show that
\[
\disc(O_LG \cdot a/O_L) = \disc(O_{\pi}/O_L).
\]
This will in turn follow if we show that
\[
\disc(O_{L^{nr}}G \cdot a / O_{L^{nr}}) = \disc(O_{\pi}/O_L) \cdot
O_{L^{nr}}.
\]

Recall (see \eqref{E:weddiso}) that we may write $L_{\pi} \simeq
\bigoplus_{G/\pi(\Omega^t)} L^{\pi}$, where $L^{\pi}$ is a field with
$\Gal(L^{\pi}/L) \simeq \pi(\Omega^t)$. Under this last isomorphism,
the inertia subgroup of $\Gal(L^{\pi}/L)$ is isomorphic to $\langle s
\rangle$. The standard formula for tame field
discriminants therefore yields
\[
\disc(O^{\pi}/O_L) = \vp^{(|s|-1)|\pi(\Omega^t)|/|s|} \cdot O_L
\]
and so we have 
\begin{equation} \label{E:disc1}
\disc(O_{\pi}/O) = \vp^{(|s|-1)|G|/|s|} \cdot O_L.
\end{equation}

Now $\br_G(a_{nr}) \in (O_{L^{nr}}G)^{\times}$, and we see from the
proof of Proposition \ref{P:phi}(b) that
\begin{align*}
O_{L^{nr}}G \cdot a &= O_{L^{nr}}G \cdot \vphi_s \\
&= O_{\pi_r}(\pi_{nr}) \otimes_{O_L} O_{L^{nr}} \\
&\simeq \bigoplus_{G/ \langle s \rangle} O_{L^{nr}} [ \vp^{1/|s|}].
\end{align*}
Since
\[
\disc(O_{L^{nr}}[\vp^{1/|s|}]/O_{L^{nr}}) = \vp^{|s|-1} \cdot O_{L^{nr}},
\]
it follows that
\begin{align*}
\disc(O_{L^{nr}}G \cdot a / O_{L^{nr}}) &= \vp^{(|s|-1)|G|/|s|} \cdot
O_{L^{nr}} \\
&= \disc(O_{\pi}/O) \cdot O_{L^{nr}},
\end{align*}
and this establishes the desired result.
\end{proof}

\begin{remark} \label{R:dbend}
We caution the reader that $L_{\pi_{nr}}$ is \textit{not} in general
equal to the maximal unramified subextension of $L_{\pi}/L$, even when
$L_{\pi}$ is a field.  Suppose, for example, that $L_{\pi}$ is a field, and write
$L_{0}$ for the maximal unramified subextension of $L_{\pi}/L$. Set $f
= [L_0:L]$. Then it is not hard to check that 
\begin {equation} \label{E:unramiso}
L_{\pi_{nr}} \simeq \prod_{i=1}^{|G|/f} L_0,
\end{equation}
and so $L_{\pi_{nr}}$ is a Galois algebra with `core field' $L_0$. If
$\alpha \in O_{L_0}$ is such that $O_{L_0} = O_L[\Gal(L_0/L)] \cdot
\alpha$, then we may take $a_{nr} = (\alpha, 0,\ldots, 0)$ under the
identification given by \eqref{E:unramiso}.

Suppose further that $L$ contains the $|s|$-th roots of unity, and
that $L_{\pi} = L_0 \cdot L(\vp^{1/|s|})$. To ease notation, write
$M:= L(\vp^{1/|s|})$, and set $H = \langle s \rangle$. Then a
calculation similar to (but simpler than) that given in the proof of
Proposition \ref{P:phi}(b) (see also Proposition \ref{P:kmot}) shows
that $O_M = O_L[H] \cdot \beta_s$, and it may be shown by computing
the coefficient of $\1_G$ on the left-hand side of \eqref{E:stickfac1}
that $a = \alpha \cdot \beta_s$, as is of course well-known.  \qed
\end{remark}

\begin{remark} \label{R:ramphi}
Suppose that $s \in G$ with $(|s|, q) = 1$. A straightforward
computation (cf. the proofs of Propositions \ref{P:kmot} and
\ref{P:phi}(b)) shows that for every $\omega \in \Omega_{L^{nr}}$, we
may write
\[
\br_G(\vphi_s)^{\omega} = \br_G(\vphi_s) \cdot \ti{\vphi}_s(\omega)
\]
where $[\ti{\vphi}_s] \in H^1_t(L^{nr}, G)$, and that $\vphi_s$ is a
normal integral basis generator of $L^{nr}_{\ti{\vphi}_s}/L^{nr}$. We
have that $[\ti{\vphi}_{s_1}] = [\ti{\vphi}_{s_2}]$ in $H^1_t(L^{nr},
G)$ if and only if $c(s_1) = c(s_2)$. It is easy to show that every element of
$H^1_t(L^{nr}, G)$ is of the form $[\ti{\vphi}_s]$ for some $s \in G$
with $(|s|, q) =1$ (cf. the proof of Proposition \ref{P:kmot} again).  \qed
\end{remark}
 
\begin{definition} \label{D:stickfac}
Let $a$ be any normal integral basis generator of $L_{\pi}/L$. Theorem
\ref{T:stickfac} implies that we may write
\begin{equation} \label{E:stickfac}
\br_G(a) = u \cdot \br_G(a_{nr}) \cdot \br_G(\vphi_s),
\end{equation}
where $u \in (OG)^{\times}$ and $a_{nr}$ is any normal integral basis
generator of $L_{\pi_{nr}}/L$. This may be viewed as being a
non-abelian analogue of a version of Stickelberger's factorisation of
abelian Gauss sums (see \cite[pages XXXV--XXXVI, and Theorems 135 and
  136]{Hi} and \cite[Introduction]{Mc1}), and so we call
\eqref{E:stickfac} a \textit{Stickelberger factorisation} of
$\br_G(a)$.  \qed
\end{definition}


\section{Local extensions II}  \label{S:local2}

Our goal in this section is to state certain results analogous to,
(but very much simpler than), those in Section \ref{S:local}, for
extensions of $F_v$ where $v$ is an infinite place of $F$. This
section may therefore be viewed as being a `supplement at infinity' to
Section \ref{S:local} (cf. \cite[Chapter I, \S3]{Fr1}). We remind the
reader that, if $v$ is infinite, by convention, we set $O_{F_v}G =
F_vG$ and $H^1_t(F_v, G) = H^1(F_v, G)$.

Suppose first that $v$ is a complex place of $F$. Then
\[
K_0(O_{F_v}G, F^{c}_{v}) = 0, \quad H^1(F_v, G) = 0,
\]
and we set $\Sigma_v(G) = \{ 1\}$. As this case is totally degenerate,
we therefore suppose henceforth in this section that $v$ is real. We
set $L = F_v \simeq \bR$, and for the remainder of this section, we
drop any further reference to $v$ from our notation.

Set $\Gal(L^c/L) = \langle \sigma \rangle$, and fix a primitive fourth
root of unity $\zeta_4 \in L^c$ (cf. the choice of compatible roots of
unity made at the beginning of Section \ref{S:local}), so $L^c =
L(\zeta_4)$.

Write
\begin{equation} \label{E:siginf}
\Sigma(G) := \{ s \in G \mid s^2 = e\}.
\end{equation}
(Note that this set is in fact independent of $v$.) For each $s \in
\Sigma(G)$, we set
\[
\beta_s = \frac{1}{2}(1 + \zeta_4).
\]
Define $\vphi_s \in \Map(G, L^c)$ by
\[
\vphi_{s}(g) = 
\begin{cases}
\sigma^{i}(\beta_s)   &\text{if $g = s^i$;} \\
0     &\text{if $g \notin \langle s \rangle$.}
\end{cases}
\]
Then it is easy to check that
\[
\br_G(\vphi_s) = \beta_s \cdot e + \sigma(\beta_s) \cdot s =
\frac{1}{2}[(1 + \zeta_4) \cdot e + ( 1 - \zeta_4) \cdot s].
\]

\begin{proposition}  \label{P:infres}
Suppose that $\pi \in \Hom(\Omega_L, G)$ with $\pi(\sigma) = s$. Then
$\vphi_s \in L_{\pi}$, and 
\[
L_{\pi} = LG \cdot \vphi_s.
\]
\end{proposition}

\begin{proof}
The first assertion follows directly from the definition of
$\vphi_s$. The second is an immediate consequence of the fact that
$\br_G(\vphi_s) \in (L^cG)^{\times}$, because 
\[
\frac{1}{2} \left( (1 + \zeta_4) \cdot e + ( 1 -
\zeta_4) \cdot s \right) \cdot \frac{1}{2}
\left( (1 - \zeta_4) \cdot e + ( 1 +
\zeta_4) \cdot s \right) = 1.
\]
\end{proof}

\begin{proposition} \label{P:infdet}
Suppose that $\chi \in R_G$, and write
\[
\chi \mid_{\langle s \rangle} = a \cdot \1 + b \cdot \ve,
\]
where $\ve$ denotes the unique non-trivial irreducible character of
$\langle s \rangle$. Then
\[
[\Det(\br_G(\vphi_s))](\chi) = (-1)^{b/2}.
\]

\end{proposition}

\begin{proof}
This follows via a straightforward computation:
\begin{align*}
[\Det(\br_G(\vphi_s))](\chi) &= \1(\br_G(\vphi_s))^{a} \cdot
\ve(\br_G(\vphi_s))^b \\
&= (\beta_s + \sigma(\beta_s))^{a} \cdot (\beta_s - \sigma(\beta_s))^b
\\
&= 1^a \cdot \zeta_{4}^{b} \\
&= (-1)^{b/2}.
\end{align*}
\end{proof}

\begin{remark} In terms of the Stickelberger pairing $\langle
  -,-\rangle_G$ which will be introduced in the next section,
  Proposition \ref{P:infdet} asserts that
\[
[\Det(\br_G(\vphi_s))](\chi) = (-1)^{\langle \chi, s \rangle_G}.
\]
\qed
\end{remark}


\section{The Stickelberger pairing} \label{S:stickpair}

\begin{definition} \label{D:stickpair} 
The \textit{Stickelberger pairing} is a $\bQ$-bilinear pairing
\begin{equation} \label{E:stickpair}
\langle -,- \rangle_G: \bQ R_G \times \bQ G \to \bQ
\end{equation}
that is defined as follows.

Let $\zeta_{|G|}$ be a fixed, primitive $|G|$-th root of unity
(cf. the conventions established at the beginning of Section
\ref{S:local}), and suppose first that $G$ is abelian. Then if $\chi
\in \Irr(G)$ and $g \in G$, we may write $\chi(g) = \zeta_{|G|}^r$ for some
integer $r$. We define
\[
\langle \chi, g \rangle_G = \left\{\frac{r}{|G|} \right\},
\]
where $\{ x\}$ denotes the fractional part of $x \in \bQ$, and we extend this
to a pairing on $\bQ R_G \times  \bQ G$ via linearity. For arbitrary
finite $G$, the Stickelberger pairing is defined via reduction to the
abelian case by setting 
\[
\langle \chi, g \rangle_G = \langle \chi \mid_{\langle g \rangle}, g
\rangle_{\langle g \rangle}.
\]
It is easy to check that both definitions agree when $G$ is abelian.
\qed
\end{definition}

We shall now explain a different way of expressing the Stickelberger
pairing using the standard inner product on $R_G$. In order to do
this, we must introduce some further notation.

For each $s \in G$, we set $m_s:= |G|/|s|$. We define a character
$\xi_s$ of $\langle s \rangle$ by $\xi_s(s^i) = \zeta_{|G|}^{im_s}$;
so $\xi_s$ is a generator of the group of irreducible characters of
$\langle s \rangle$. Then it follows from Definition \ref{D:stickpair}
that
\[
\langle \xi_{s}^{\alpha}, s^{\beta} \rangle_{\langle s \rangle} =  
\left\{ 
\frac{\alpha \beta}{|s|} 
\right\} .
\]
Define
\[
\Xi_s:= \frac{1}{|s|} \sum_{j=1}^{|s|-1} j \xi_{s}^{j}.
\]

\begin{proposition} \label{P:stickpair}
Let $(-,-)_G$ denote the standard inner product on $R_G$, and suppose
that $\chi \in R_G$, $s \in G$. Then we have
\[
( \chi, \Ind_{\langle s \rangle}^{G}(\Xi_s))_G = \langle \chi, s
\rangle_G.
\]
\end{proposition}

\begin{proof}
Suppose that
\[
\chi \mid_{\langle s \rangle} = \sum_{j =0}^{|s|-1} a_j \xi_{s}^{j},
\]
where $a_j \in \bZ$ for each $j$. Then we have
\begin{align*}
\langle \chi, s \rangle_G &= \sum_{j=0}^{|s|-1} a_j \langle \xi_{s}^{j}, s
\rangle_{\langle s \rangle} \\
&= \sum_{j =0}^{|s|-1} a_j \left\{\frac{j}{|s|} \right\} \\
&= \frac{1}{|s|} \sum_{j=0}^{|s|-1} a_j j.
\end{align*}

On the other hand, via Frobenius reciprocity, we have
\begin{align*}
( \chi, \Ind_{\langle s \rangle}^{G}(\Xi_s))_G &= ( \chi
  \mid_{\langle s \rangle}, \Xi(s))_{\langle s \rangle} \\
&= \left( \sum_{j=0}^{|s|-1} a_j \xi_s^j, \frac{1}{|s|} \sum_{j=0}^{|s|-1}
  j \xi_s^j \right)_{\langle s \rangle} \\
&= \frac{1}{|s|} \sum_{j=0}^{|s|-1} a_j j \\
&= \langle \chi, s \rangle_G,
\end{align*}
and this establishes the desired result.
\end{proof}

In order to apply Poposition \ref{P:stickpair}, we shall require the
following result concerning traces of sums of roots of unity.

\begin{lemma} \label{L:cyclem}
Let $n>1$ be an integer, and suppose that $\zeta$ is any primitive
$n$-th root of unity.  Write
\[
y := \sum_{i =1}^{n-1} i \cdot \zeta^i.
\]
Then
\[
\Tr_{\bQ(\zeta)/\bQ}(y) = -\frac{1}{2}n \phi(n),
\]
where $\phi$ is the Euler $\phi$-function. In particular,
$\Tr_{\bQ(\zeta)/\bQ}(y) \neq 0$.
\end{lemma}

\begin{proof}
Each $\zeta^i$ is a primitive $d$-th root of unity for some
divisor $d$ of $n$, and so it follows that
\[
y = \sum_{d \mid n} \sum_{\stackrel{1 \leq r \leq d-1}{(r,d) =1}}
\frac{nr}{d} \zeta^{nr/d}.
\]
If $d|n$, then applying M\"obius inversion to the identity $x^d - 1 =
\prod_{m \mid d} \Phi_{m}(x)$ (where $\Phi_{m}(x)$ denotes the $m$-th
cyclotomic polynomial) yields $\Phi_{m}(x) = \prod_{m \mid d}
(x^{m.}-1)^{\mu(d/m)}$, whence it is not hard to show that
$\Tr_{\bQ(\ve)/\bQ}(\ve) = \mu(d)$ for any primitive $d$-th root $\ve$
of unity. Hence $\Tr_{\bQ(\zeta)/\bQ}(\ve) = \phi(n) \mu(d)/\phi(d)$,
and so we have
\begin{align*}
\Tr_{\bQ(\zeta)/\bQ}(y) &=  
\sum_{d \mid n} \sum_{\stackrel{1 \leq r \leq d-1}{(r,d) =1}}
\frac{nr}{d} \Tr_{\bQ(\zeta)/\bQ}(\zeta^{nr/d})  \\
&= n \sum_{d \mid n} \frac{\mu(d)}{d} \frac{\phi(n)}{\phi(d)} s(d),
\end{align*}
where
\[
s(d) =
\begin{cases}
1 &\text{if $d = 1$;} \\
\sum_{\stackrel{1 \leq i \leq d-1}{(i,d)
    =1}} i &\text{if $d>1$}. \\
\end{cases}
\]
It is well known that
\[
s(d) = \frac{1}{2}d \phi(d)
\]
for any integer $d>1$ (see e.g. \cite[Theorem 7.7]{Bu07}). It therefore
follows that
\begin{align*}
\Tr_{\bQ(\zeta)/\bQ}(y) &= \frac{1}{2} n \phi(n) \sum_{\stackrel{d \mid n}{d>1}}
\mu(d) \\
&= -\frac{1}{2}n \phi(n),
\end{align*}
as claimed.
\end{proof}

We can now state the following corollary to Proposition
\ref{P:stickpair}.

\begin{corollary} \label{C:stick}
Suppose that $s_1$ and $s_2$ are elements of $G$. 

(i) If $c(s_1) = c(s_2)$, then $\langle \chi,
s_1 \rangle_G = \langle \chi, s_2 \rangle_G$ for all $\chi \in \bQ
R_G$.

(ii) If $\langle \chi, s_1 \rangle_G = \langle \chi, s_2 \rangle_G$
for all $\chi \in \bQ R_G$, then $\langle s_1 \rangle$ is conjugate to
$\langle s_2 \rangle$ in $G$.

(iii) We have that $\langle \chi, s_1 \rangle_G = 0$ for all $\chi \in
\bQ R_G$ if and only if $s_1 = e$.
\end{corollary}

\begin{proof}
(i) Let $\chi \in R_G$ and $s \in G$. It follows from the definition of
the Stickelberger pairing that for fixed $\chi$, the value of $\langle
\chi, s \rangle_G$ depends only upon the conjugacy class $c(s)$ of $s$
in $G$. Hence, if $c(s_1) = c(s_2)$, then $\langle \chi, s_1 \rangle_G
= \langle \chi, s_2 \rangle_G$ for all $\chi \in \bQ R_G$.

(ii) To show this we use Proposition \ref{P:stickpair}. We first note
that a straightforward computation shows that the degree of the
virtual character $\Ind_{\langle s \rangle}^{G}(\Xi_{s})$ is equal to
$|G|(|s|-1)/2|s|$, and so we see that $\Ind_{\langle s \rangle}^{G}
(\Xi_{s})$ determines $|s|$. Next, we remark that 
If $\{ t_i \}$ is a set of representatives of
$G/\langle s \rangle$, then for each $g \in G$, we have
\begin{equation} \label{E:indform}
[\Ind^{G}_{\langle s \rangle}(\Xi_{s})](g) = \sum_{t^{-1}_{i}g t_i
  \in \langle s \rangle} \xi_s(t^{-1}_{i}g t_i),
\end{equation}
and so  the character $\Ind^{G}_{\langle s \rangle}(\Xi_{s})$
vanishes on all elements of $G$ that are not conjugate to an element
of $\langle s \rangle$.

Proposition \ref{P:stickpair} implies that under our hypotheses,
$\Ind^{G}_{\langle s_1 \rangle}(\Xi_{s_1}) = \Ind^{G}_{\langle s_2
  \rangle}(\Xi_{s_2})$. Hence, to prove the desired result, it
suffices to show that $[\Ind^{G}_{\langle s_1
    \rangle}(\Xi_{s_1})](s_1) \neq 0$, because then
\[
[\Ind^{G}_{\langle s_2 \rangle}(\Xi_{s_1})](s_1) = [\Ind^{G}_{\langle
    s_1 \rangle}(\Xi_{s_1})](s_1) \neq 0,
\]
which implies (since $|s_1| = |s_2|$) that $s_1$ is conjugate to a
generator of $\langle s_2 \rangle$.

Now if $s_{1}^{a}$ is any generator of $\langle s_1 \rangle$, then
$\xi_{s_1}(s_{1}^{a})$ is a primitive $|s_1|$-th root of unity, and we have
\[
\xi_{s_1}(s_{1}^{a}) = \sum_{i=1}^{|s_{1}|-1} i
\xi_{s_{1}}(s_{1}^{a})^{i}.
\]
Hence if $\zeta$ denotes any primitive $|s_1|$-th root of unity,
Lemma \ref{L:cyclem} implies that
\[
\Tr_{\bQ(\zeta)\/\bQ}(\xi_{s_1}(s_{1}^{a})) = -\frac{1}{2} |s_1|
\phi(|s_1|).
\]
It follows from \eqref{E:indform} that
$\Tr_{\bQ(\zeta)/\bQ}[\Ind_{s_1}^{G}(\Xi_{s_1})](s_1)$ is equal to a
non-zero multiple of $-|s_1| \phi(|s_1|)/2$, and so is non-zero. This
in turn implies that $[\Ind_{s_1}^{G}(\Xi_{s_1})](s_1)$ is also
non-zero, thereby establishing the desired result.

(iii) Proposition \ref{P:stickpair} implies that $\langle \chi, s_1
\rangle_G = 0$ for all $\chi \in \bQ R_G$ if and only if
$(\Ind_{\langle s_1 \rangle}^{G}(\Xi_{s_1}), \chi)_G = 0$ for all
$\chi \in \bQ R_G$. The latter condition holds if and only if 
$\Ind_{\langle s_1 \rangle}^{G}(\Xi_{s_1}) = 0$ and this happens if
and only if $s_1 = e$.
\end{proof}

\begin{remark} \label{remstick} 
(a) The converse to Corollary \ref{C:stick}(i) does not hold in general,
e.g. it fails for the dihedral group $D_{2p}$ of order $2p$, where $p>3$
is a prime. (See \cite[Chapter 3]{Siv} or \cite{Siv1} for an
explicit description of the Stickelberger pairing in this case.)

(b) Let $\chi_1,\ldots, \chi_d$ (respectively $c_1,\ldots, c_d$) be the
set of irreducible characters (respectively conjugacy classes) of
$G$. We refer the reader to \cite{BFKMST} for computations and
conjectures concerning the rank of the $d \times d$-matrix
$[\langle \chi_i, c_j \rangle_G]$ associated to the Stickelberger
pairing $\langle -,- \rangle_G$ when $G$ is cyclic.
\qed
\end{remark}


\section{The Stickelberger map and transpose
  homomorphisms} \label{S:stickhom} 

\subsection{The Stickelberger map}

\begin{definition}
The \textit{Stickelberger map}
\begin{equation} \label{E:stickmap}
\Theta =\Theta_G: \bQ R_G \to \bQ G
\end{equation}
is defined by
\[
\Theta(\chi) = \sum_{g \in G} \langle \chi, g \rangle_G \cdot g.
\]
\qed
\end{definition}

We write $G(-1)$ for the set $G$ endowed with an action of $\Omega_F$
via the inverse cyclotomic character. Note that in general, for
non-abelian $G$, this $\Omega_F$-action is not an action on $G$ via
group automorphisms; it is only an action on the set $G$. However, it
does induce an action on the additive group $\bQ G(-1)$, which is all
that we shall require.

The following proposition summarises some basic properties of the
Stickelberger map.

\begin{proposition} \label{P:stickmap}
(a) We have that $\Theta(\chi) \in Z(\bQ G)$ for all $\chi \in
  R_G$, i.e. in fact
\[
\Theta: \bQ R_G \to Z(\bQ G).
\]

(b) Suppose that $\chi \in R_G$. Then $\Theta(\chi) \in \bZ G$ if and
only if $\chi \in A_G$. Hence $\Theta$ induces a homomorphism $A_G \to
\bZ G$.

(c) The map
\[
\Theta: \bQ R_G \to \bQ G(-1)
\]
is $\Omega_F$-equivariant.
\end{proposition}

\begin{proof} The proofs of these assertions for arbitrary $G$ are
  essentially the same as those in the case of abelian $G$.  See
  \cite[Propositions 4.3 and 4.5]{Mc1}.

(a) It follows from the definition of the Stickelberger pairing that
 if $\chi \in R_G$ and $g \in G$, then $\langle \chi, g \rangle_G$
 is determined by the conjugacy class $c(g)$ of $g$ in $G$. This
 implies that $\Theta(R_G) \subseteq Z(\bQ G)$, as claimed.

(b) Suppose that $\chi \in R_G$ and $g \in G$. Write
\[
\chi \mid_{\langle g \rangle} = \sum_{\eta} a_{\eta} \eta,
\]
where the sum is over irreducible characters of $\langle g \rangle$,
and set $\zeta_{|g|}:= \zeta_{|G|}^{|G|/|g|}$. Then
\begin{align*}
(\det(\chi))(g) &= \det(\chi \mid_{\langle g \rangle})(g) \\
&= \prod_{\eta} \eta(g)^{a_{\eta}} \\
&= \prod_{\eta} \zeta_{|g|}^{|g| \langle a_{\eta} \eta, g \rangle_{\langle g \rangle}} \\
&= \zeta_{|g|}^{|g| \sum_{\eta}\langle a_{\eta} \eta,
    g\rangle_{\langle g \rangle}} \\
&= \zeta_{|g|}^{|g| \langle \chi, g \rangle_{G}}.
\end{align*}
It now follows that $\langle \chi, g \rangle_G \in \bZ$
for all $g \in G$ if and only if $\chi \in \Ker(\det) = A_G$, as
required.

(c) Let $\kappa$ denote the cyclotomic character of $\Omega_F$, and
suppose that $\chi \in R_G$ is of degree one. Then, for each $g \in
G$ and $\omega \in \Omega_F$, we have
\begin{equation*} 
\chi^{\omega}(g) = \chi(g^{\kappa(\omega)}), 
\end{equation*}
and so 
\begin{equation} \label{E:d37}
\langle \chi^{\omega}, g \rangle_G = \langle \chi,
g^{\kappa(\omega)} \rangle_G.
\end{equation}
It follows via bilinearity that \eqref{E:d37} holds for all $\chi \in
R_G$ and all $g \in G$. Hence, if we view $\Theta(\chi)$ as being an
element of $\bQ G(-1)$, then
\begin{align*}
\Theta(\chi^{\omega}) &= \sum_{g \in G} \langle \chi^{\omega}, g
\rangle_G \cdot g \\
&= \sum_{g \in G} \langle \chi, g^{\kappa(\omega)} \rangle_G \cdot g \\
&= \sum_{g \in G} \langle \chi, g \rangle_G \cdot
g^{\kappa^{-1}(\omega)}\\
&= \Theta(\chi)^{\omega}.
\end{align*}
\end{proof}

\subsection{Transpose Stickelberger homomorphisms}
We see from Proposition \ref{P:stickmap} that dualising the
homomorphism
\[
\Theta: A_G \to Z(\bZ G)
\]
and twisting by the inverse cyclotomic character yields an
$\Omega_F$-equivariant \textit{transpose Stickelberger
  homomorphism}
\begin{equation} \label{E:mstickhom}
\Theta^t: \Hom(Z(\bZ G(-1)), (F^c)^{\times}) \to \Hom(A_G, (F^c)^{\times}).
\end{equation}
Composing \eqref{E:mstickhom} with the sequence of
homomorphisms
\begin{equation} \label{E:mstickhomi}
\Hom(A_G, (F^c)^{\times})  \xrightarrow{\sim}
Z(F^cG)^{\times}/G^{ab} \to
\frac{\Det(F^cG)^{\times}}{\Det(O_FG)^{\times}} \to K_0(O_FG, F^c),
\end{equation}
(where the first arrow is given by \eqref{E:zres}, the second via (the
inverse of) \eqref{E:nrdiso}, and the third is via the homomorphism
$\partial^{1}$ of \eqref{E:rkes}) yields a homomorphism
\begin{equation} \label{E:stickhom}
K\Theta^t: \Hom(Z(\bZ G(-1)), (F^c)^{\times}) \to
K_0(O_FG, F^c).
\end{equation}

Hence, if we write $\cC(G(-1))$ for the set of conjugacy classes of
$G$ endowed with $\Omega_F$-action via the inverse cyclotomic
character, and set
\begin{align*}
\Lambda(O_FG):= \Hom_{\Omega_F}(Z(\bZ G(-1)), O_{F^{c}}) &=
\Map_{\Omega_F}(\cC(G(-1)), O_{F^{c}})\\
&= Z(O_{F^{c}}[G(-1)])^{\Omega_F};\\
\Lambda(FG):= \Hom_{\Omega_F}(Z(\bZ G(-1)), F^c) &= 
\Map_{\Omega_F}(\cC(G(-1)), F^c)\\
&= Z(F^c[G(-1)])^{\Omega_F},
\end{align*}
then $K\Theta^t$ induces a homomorphism (which we denote by the same
symbol):
\[
K\Theta^t: \Lambda(FG)^{\times} \to K_0(O_FG, F^c).
\]

For each place $v$ of $F$, we may apply the discussion above
with $F$ replaced by $F_v$ to obtain  local versions
\begin{equation}
\Theta^t_v: \Hom(Z(\bZ G(-1)), (F_v^c)^{\times}) \to \Hom(A_G, (F_v^c)^{\times})
\end{equation}
and
\begin{equation} \label{E:locstick}
K\Theta_{v}^{t}: \Lambda(F_vG)^{\times} \to K_0(O_{F_v}G, F_v^c)
\end{equation}
of the maps $\Theta^t$ and $K\Theta^t$ respectively. The homomorphism
$\Theta^t$ commutes with local completion, and $K\Theta^t$ commutes
with the localisation maps
\[
\lambda_v: K_0(O_FG, F^c) \to K_0(O_{F_v}G, F_v^c).
\]

\begin{definition}
We define the group of ideles $J(\Lambda(FG))$ of $\Lambda(FG)$ to be
the restricted direct product over all places $v$ of $F$ of the groups
$\Lambda(F_vG)^{\times}$ with respect to the subgroups
$\Lambda(O_{F_v}G)^{\times}$. \qed
\end{definition}

For all finite places $v$ of $F$ not dividing the order of $G$, as
$O_{F_{v}}G$ is an $O_{F_v}$-maximal order in $F_vG$, we have that
(cf. Proposition \ref{P:agdet}(ii))
\[
\Theta_v^t(\Lambda(O_{F_v}G)) \subseteq \Hom_{\Omega_{F_v}}(A_G,
  (O_{F_v^c})^{\times}) = \Det(\cH(O_{F_v}G)),
\]
and so 
\[
K\Theta_v^t(\Lambda(O_{F_v}G)) \subseteq K_0(O_{F_v}G, O_{F_v^c}).
\]
It follows that the homomorphisms $\Theta^t_v$ combine to yield an
idelic transpose Stickelberger homomorphism
\begin{equation} \label{E:idstick}
K\Theta^t: J(\Lambda(FG)) \to J(K_0(O_FG, F^c)).
\end{equation}

We shall see in the next subsection that the idelic homomorphism
$K\Theta^t$ is closely related to the homomorphism
\[
\bpsi^{id}: J(H^{1}_{t}(F,G)) \to J(K_0(O_FG, F^c))
\]
of Definition \ref{D:psid}.

\subsection{Prime $\bF$-elements}
\begin{definition} \label{D:fprime}
Let $v$ be a place of $F$. For each element $s \neq e$ of
$\Sigma_v(G)$ (see Definition \ref{D:admiss} and \eqref{E:siginf}),
define $f_{v,s} \in \Lambda(F_vG)^{\times}$ by
\begin{equation}
f_{v,s}(c) =
\begin{cases}
-1 &\text{if $v$ is real and $c = c(s)$;} \\
\vp_v, &\text{if $v$ is finite and $c=c(s)$;} \\ 
1, &\text{otherwise.}
\end{cases}
\end{equation}
Observe that $f_{v,s}$ is $\Omega_{F_v}$-equivariant because $s \in
\Sigma_v(G)$ and so $\Omega_{F_v}$ fixes $c(s)$ when $s$ is viewed as
an element of $G(-1)$. The element $f_{v,s}$ depends only upon the
conjugacy class $c(s)$ of $s$. For all places $v$ of $F$, we define $f_{v,e} \in
(\Lambda(F_vG))^{\times}$ to be the constant function $f_{v,e} = 1$.

Write
\begin{equation*}
\bF_v:= \{f_{v,s}\, \mid \, \text{$s \in \Sigma_v(G)$}\},
\end{equation*}
and define the subset $\bF \subset J(\Lambda(FG))$ of prime
$\bF$-elements by
\begin{equation*}
f \in \bF \iff \text{$f \in J(\Lambda(FG))$ and $f_v \in \bF_v$ for
all places  $v$ of $F$.}
\end{equation*}
Following \cite[Definition 7.1]{B}, we define the \textit{support}
$\Supp(f)$ of $f \in \bF$ to be set of all places $v$ of $F$ for which
$f_v \neq 1$. We say that $f$ is \textit{full} if, for each $s \in G$
there is a place $v$ with $f_v = f_{v,s}$.  \qed
\end{definition}

Our interest in the set $\bF$, as well as the relationship between
$K\Theta^t$ and $\bpsi^{id}$, is explained by the following result.

\begin{proposition} \label{P:reseq} 
Let $v$ be a place of $F$.

(a) For each $s \in \Sigma_v(G)$, we have
\[
\Det(\br_G(\vphi_{v,s})) = K\Theta^t_v(f_{v,s})
\]
in $K_0(O_{F_v}G, F_v^c)$.

(b) Suppose that $s_1, s_2 \in \Sigma_v(G)$ with
\begin{equation} \label{E:eqdet}
\Det(\br_G(\vphi_{v,s_1})) = \Det(\br_G(\vphi_{v,s_2})).
\end{equation}
Then $\langle s_1 \rangle$ is conjugate in $G$ to $\langle s_2
\rangle$.

(c) Suppose that $v$ is finite. Let $\pi_1, \pi_2 \in
\Hom(\Omega_{F_v},G)$ with $[\pi_i] \in H^{1}_{t}(F_v, G)$ for each
$i$, and set $s_i = \pi_i(\sigma_v)$ (cf. \eqref{E:rmap}). Let $a_i$ be
a normal integral basis generator of $F_{v,\pi_{i}}/F_v$, and let
\[
\br_G(a_i) = u_i \cdot \br_G(a_{i,nr}) \cdot \br_G(\vphi_{s_{i}})
\]
be a Stickelberger factorisation of $\br_G(a_i)$ (see Definition
\ref{D:stickfac}). Suppose that
\begin{equation} \label{E:detunit}
\Det(\br_G(a_1)) \cdot \Det(\br_G(a_2))^{-1} \in
\Det((O_{F_{v}^{c}}G)^{\times}).
\end{equation}
Then
\[
\Det(\br_G(\vphi_{s_1})) = \Det(\br_G(\vphi_{s_2}))
\]
and for some integer $m$ and some $h \in G$, the equality
\[
\pi_1(\omega) = h \cdot \pi_2(\omega)^m \cdot h^{-1}
\]
holds for all $\omega \in I_v$.
\end{proposition}

\begin{proof} 
(a) The proof of this assertion is very similar to that of
  \cite[Proposition 5.4]{Mc1}.

It suffices to show that the equality
\[
\Det(\br_G(\vphi_{v,s})) = \Theta^t_v(f_{v,s})
\]
holds in $\Hom(A_G, (F^c_v)^{\times})$.

Let $\chi \in R_G$, and write
\[
\chi \mid_{<s>} = \sum_{\eta} a_{\eta} \eta,
\]
where the sum is over irreducible characters $\eta$ of $\langle s
\rangle$. 

Suppose first that $v$ is finite. Using \eqref{E:locres}, we see that
(cf. \cite[Proposition 5.4]{Mc1})
\begin{align}  \label{E:phivalue}
[\Det(\br_G(\vphi_{v,s}))](\chi) &= \prod_{\eta} \left( \sum_{i=0}^{|s|-1}
\sigma_{v}^{i}(\beta_s) \eta(s^{-i}) \right)^{a_{\eta}}  \notag \\
&= \vp_{v}^{\langle \sum_{\eta}a_{\eta} \eta, s \rangle_{\langle s \rangle}}  \notag \\
&= \vp_{v}^{\langle \chi, s \rangle_{G}},
\end{align}
and so it follows that
\[
[\Det(\br_G(\vphi_{v,s}))](\alpha) = \vp_{v}^{\langle \alpha, s \rangle_{G}}
\]
for all $\alpha \in A_G$.

If $v$ is real, then then the proof of Proposition \ref{P:infdet}
shows directly that
\[
[\Det(\br_G(\vphi_{v,s}))](\chi) = (-1)^{\langle \chi, s \rangle_{G}},
\]
and so we have
\[
[\Det(\br_G(\vphi_{v,s}))](\alpha) = (-1)^{\langle \alpha, s \rangle_{G}}
\]
for all $\alpha \in A_G$ in this case also.

Now suppose that $v$ is either finite or real. If $\alpha \in A_G$,
then we have
\begin{align*}
(\Theta_v^t(f_{v,s}))(\alpha) &= f_{v,s}(\Theta(\alpha)) \\
&= f_{v,s} \left( \sum_{g \in G} \langle \alpha, g \rangle_G \cdot g \right) \\
&= \prod_{g \in G} f_{v,s}(g)^{\langle \alpha, g \rangle_{G}} \\
&= 
\begin{cases}
\vp_{v}^{\langle \alpha, s \rangle_{G}}, &\text{if $v$ is finite;} \\
(-1)^{\langle \alpha, s \rangle_{G}}, &\text{if $v$ is real.}
\end{cases}
\end{align*}

The desired result now follows.

(b) The proof of (a) above shows that if \eqref{E:eqdet} holds, then
\[
\langle \chi, s_1 \rangle_G = \langle \chi, s_2 \rangle_G
\]
for every $\chi \in R_G$. It therefore follows from Corollary
\ref{C:stick} that $\langle s_1 \rangle$ is conjugate in $G$ to
$\langle s_2 \rangle$.

(c) Observe that \eqref{E:detunit} holds if and only if
\begin{equation} \label{E:phiunit}
\Det(\br_{G}(\vphi_{s_{1}})) \cdot \Det(\br_{G}(\vphi_{s_{2}})^{-1})
\in \Det((O_{F_{v}^{c}}G)^{\times}),
\end{equation}
and the proof of part (a) (see \eqref{E:phivalue}) implies that
\eqref{E:phiunit} holds if and only if 
\[
\Det(\br_G(\vphi_{s_1})) = \Det(\br_G(\vphi_{s_2})).
\] 
Part (b) therefore implies that $\langle s_1 \rangle$ and
$\langle s_2 \rangle$ are conjugate. Hence
\[
s_1 = h \cdot s_{2}^{m} \cdot h^{-1}
\]
for some $m \in \bZ$ and $h \in G$, and so
\[
\br_G(\vphi_{s_1}) = h \cdot \br_{G}(\vphi_{s^{m}_{2}}) \cdot h^{-1}
\]
(see \eqref{E:conjres}). 

For any $\omega \in \Omega_{F_{v}^{nr}}$, we have
\[
\pi_i(\omega) = \br_G(a_i)^{-1} \cdot \br_G(a_i)^{\omega} =
\br_G(\vphi_{s_i})^{-1} \cdot \br_G(\vphi_{s_i})^{\omega}.
\]
Applying the map $F^{c}_{v}G \to F^{c}_{v}G$ defined by $\sum_g a_g g
\mapsto \sum_g a_g g^m$ to this equality (when $i =
2$) yields
\[
\pi_2(\omega)^m = \br_G(\vphi_{s^{m}_{2}})^{-1} \cdot
\br_G(\vphi_{s^{m}_{2}})^{\omega}.
\]
The final assertion now follows.
\end{proof}

\subsection{The Stickelberger pairing revisited} \label{SS:Stickred}
In this subsection we shall briefly describe an alternative definition
of the Stickelberger pairing that involves a direct connection with
resolvends of local normal integral basis generators. This will not be
used in the sequel.

Let $v$ be a finite place of $F$. There is a natural pairing
\begin{equation} \label{E:modpair}
\{-,-\}_{G,v}: \Irr(G) \times H^1(F^{nr}_{v},G) \to \bQ/\bZ;\quad (\chi,
   [\pi]) \mapsto [v(\Det(\br_G(a(\pi)))(\chi))],
\end{equation}
where $a(\pi)$ is any normal basis generator of
$F^{nr}_{v,\pi}/F^{nr}_{v}$. Recall that every element of
$H^1_t(F^{nr}_{v},G)$ is of the form $\ti{\vphi}_{v,s}$ for some $s
\in G$ with $v \nmid |s|$ (see Remark \ref{R:ramphi}).  The
restriction of $\{-,-\}_{G,v}$ to $\Irr(G) \times H^1_t(F^{nr}_{v}, G)$
yields a refined pairing
\begin{equation} \label{E:refmodpair}
\{-,-\}_{G,v}^{(1)}: \Irr(G) \times H^1_t(F^{nr}_{v},G) \to \bQ;\quad (\chi,
\ti{\vphi}_{v,s}) \mapsto v(\Det(\br_G(\vphi_{v,s}))(\chi)).
\end{equation}
This leads to the following definition.

\begin{definition} \label{D:modstick}
Suppose that $v$ is finite and that $v \nmid |G|$. We define a pairing
\begin{equation} \label{E:modstick}
[-,-]_{G,v}: \Irr(G) \times G \to \bQ;\quad (\chi, g) \mapsto
v(\Det(\br_G(\vphi_{v,g}))(\chi)),
\end{equation}
and we extend this to a pairing on $\bQ R_G \times \bQ G$ via linearity.
\qed
\end{definition}

\begin{proposition} \label{P:funcstick}
Suppose that $v$ is finite and that $v \nmid |G|$. Then for each $\chi
\in \Irr(G)$ and $g \in G$, we have
\begin{equation} \label{E:funcstick}
[\chi, g]_{G,v} = [\chi \mid_{\langle g \rangle}, g]_{\langle g\rangle, v}.
\end{equation}
\end{proposition}

\begin{proof}
Set $H:= \langle g \rangle$. The property \eqref{E:funcstick} is a
direct consequence of the fact that the restriction map $R_G \to R_H$
induces a homomorphism $\Hom(R_H, (F^c_v)^{\times}) \to \Hom(R_G,
(F^c_v)^{\times})$ such that the following diagram commutes:
\[
\begin{CD}
(F_v^cH)^{\times} @>{\subseteq}>> (F^c_vG)^{\times} \\
@VV{\Det}V                    @VV{\Det}V \\
\Hom(R_H, (F^c_v)^{\times}) @>>> \Hom(R_G, (F^c_v)^{\times})\\
\end{CD}
\]
(see e.g. \cite[p. 436]{Fr76} or \cite[p. 118]{Fr1}).
\end{proof}

\begin{proposition} \label{P:eqstick}
Suppose that $v$ is finite and that $v \nmid |G|$. Then for each $\chi \in \Irr(G)$
and $g \in G$, we have
\begin{equation} \label{E:eqstick}
[\chi, g]_{G,v} = \langle \chi, g \rangle_G.
\end{equation}
In particular, $[-,-]_{G,v}$ is independent of our choice of $v$.
\end{proposition}

\begin{proof} 
Proposition \ref{P:funcstick} implies that we may assume that $G$ is
cyclic. The equality \eqref{E:eqstick} may then be established via an
argument identical to that used in the proof of Proposition
\ref{P:reseq}(a) (see also \cite[Proposition 5.4]{Mc1}).
\end{proof}


\section{Modified ray class groups}  \label{S:modray}

\begin{definition}
Let $\fa$ be an integral ideal of $O_F$. For each finite place $v$ of
$F$, recall that
\[
U_{\fa}(O_{F^c_v}):= (1 + \fa O_{F^c_v}) \cap
(O_{F^c_v})^{\times}.
\]
We define
\begin{equation*}
U_{\fa}'(\Lambda(O_{F_v}G)) \subseteq \Lambda(F_vG)^{\times} =
\Map_{\Omega_{F_v}}(\cC(G(-1)), (F_{v}^{c})^{\times})
\end{equation*}
by
\begin{equation*}
U_{\fa}'(\Lambda(O_{F_v}G)):= \left\{ g_v \in \Lambda(F_vG)^{\times} \mid
g_v(c) \in U_{\fa}(O_{F^c_v}) \quad \forall c \neq 1 \right\}
\end{equation*}
(with $g_v(1)$ allowed to be arbitrary).

Set
\begin{equation*}
U_{\fa}'(\Lambda(O_FG)):= \left( \prod_v U_{\fa}'(\Lambda(O_{F_v}G))
\right) \cap J(\Lambda(FG)).
\end{equation*}
\qed
\end{definition}

\begin{definition}
For each real place $v$ of $F$, we define
\[
\Lambda(F_vG)^{\times}_{+}:= \{ g_v \in \Lambda(F_vG)^{\times} \mid g_v(c)
\in \bR^{\times}_{>0}\, \text{for all $c \in \cC(G(-1))$} \}
\]
(with $g_v(1)$ allowed to be arbitrary).

If $v$ is complex, we set $\Lambda(F_vG)_{+}^{\times} :=
\Lambda(F_vG)^{\times}$.  We define
\[
U'_{\infty}(\Lambda(O_FG)):= \left( \prod_{v | \infty}
\Lambda(FG)^{\times} \right) \cap J(\Lambda(FG)),
\]
and
\[
U'_{\infty}(\Lambda(O_FG))_{+}:= \left( \prod_{v | \infty}
\Lambda(FG)_{+}^{\times} \right) \cap J(\Lambda(FG)).
\]
\qed
\end{definition}

\begin{definition}
The \textit{modified ray class group modulo} $\fa$ of $\Lambda(O_FG)$ is
defined by
\begin{equation*}
\Cl_{\fa}'(\Lambda(O_FG)):= \frac{J(\Lambda(FG))}{\Lambda(FG)^{\times} \cdot
  U_{\fa}'(\Lambda(O_FG)) \cdot U_{\infty}'(\Lambda(O_FG))}.
\end{equation*}

The \textit{modified narrow ray class group modulo} $\fa$ is
defined by
\begin{equation*}
\nCl(\Lambda(O_FG)):= \frac{J(\Lambda(FG))}{\Lambda(FG)^{\times} \cdot
  U_{\fa}'(\Lambda(O_FG)) \cdot U_{\infty}'(\Lambda(O_FG))_{+}}.
\end{equation*}

We refer to the elements of $\Cl'_{\fa}(\Lambda(O_FG))$ (respectively
$\nCl(\Lambda(O_FG))$) as the \textit{modified ray classes}
(respectively \textit{modified narrow ray classes}) of $\Lambda(O_FG)$
modulo $\fa$.
\qed
\end{definition}

\begin{remark}
Fix a set of representatives $T$ of $\Omega_F\backslash \cC(G(-1))$, and
for each $t \in T$, let $F(t)$ be the smallest extension of $F$ such
that $\Omega_{F(t)}$ fixes $t$. Then the Wedderburn decomposition of
$\Lambda(FG)$ is given by
\begin{equation} \label{E:wiso} 
\Lambda(FG) = \Map_{\Omega_F}(\cC(G(-1)), F^c) \simeq \prod_{t \in T}F(t),
\end{equation}
where the isomorphism is induced by evaluation on the elements of $T$.

The group $\Cl_{\fa}'(\Lambda(O_FG))$ (respectively
$\nCl(\Lambda(O_FG))$) above is finite, and is isomorphic to
the product of the ray class groups $\Cl_{\fa}(O_{F(t)})$
(respectively the narrow ray class groups $\Cl^{+}_{\fa}(O_{F(t)})$)
modulo $\fa$ of the Wedderburn components $F(t)$ of $\Lambda(FG)$ with
$t \neq 1$. There is a natural surjection
\[
\nCl(\Lambda(O_FG)) \to \Cl_{\fa}'(\Lambda(O_FG))
\]
with kernel an elementary abelian $2$-group.

If $|G|$ is odd, then (as no non-trivial element of $G$ is conjugate
to its inverse), $F(t)$ has no real places when $t \neq 1$, and so
$\Cl_{\fa}(O_{F(t)}) = \Cl^{+}_{\fa}(O_{F(t)})$. Hence
we have
\[
\nCl(\Lambda(O_FG)) = \Cl_{\fa}(\Lambda(O_FG))
\]
whenever $G$ is of odd order.
\qed
\end{remark}

\begin{proposition} \label{P:raysurj}
Let $\fa$ be any integral ideal of $O_F$. Then the inclusion $\bF \to
J(\Lambda(FG))$ induces a surjection $\bF \to
\nCl(\Lambda(O_FG))$. In particular, each modified narrow ray
class modulo $\fa$ of $\Lambda(O_FG)$ contains infinitely many
elements of $\bF$.
\end{proposition}

\begin{proof}
Let $I(\Lambda(O_FG))$ denote the group of fractional ideals of
$\Lambda(O_FG)$. Then via the Wedderburn decomposition \eqref{E:wiso}
of $\Lambda(FG)$, we see that each fractional ideal $\fB$ in
$\Lambda(O_FG)$ may be written in the form $\fB = (\fB_t)_{t \in T}$,
where each $\fB_t$ is a fractional ideal of $O_{F(t)}$. For each
conjugacy class $t \in T$, let $o(t)$ denote the $\Omega_F$-orbit of
$t$ in $\cC(G(-1))$, and write $|t|$ for the order of any element of
$t$.

For each idele $\nu \in J(\Lambda(FG))$, let 
\[
\co(\nu):= [ \co(\nu)_{t}]_{t \in T} \in I(\Lambda(O_{F}G)) \simeq
\prod_{t \in T} I(O_{F(t)})
\] 
denote the ideal obtained by taking the idele content of
$\nu$. If $v$ is a place of $F$, we view $\bF_v$ as being a
subset of $\bF$ via the obvious embedding $\Lambda(F_vG)^{\times}
\subseteq J(\Lambda(FG))$, and we set
\begin{equation*}
\fF_v:= \{ \co(f_v) \mid f_v \in \bF_v \}.
\end{equation*}

Now suppose that $v$ is finite, and consider the ideal
\[
\co(f_{v,s}) = [\co(f_{v,s})_t]_{t \in T}
\]
in $I(\Lambda(O_FG))$.  If $c(s) \notin o(t)$, then it follows from the
definition of $f_{v,s}$ that $\co(f_{v,s})_t = O_{F(t)}$. Suppose that
$c(s) \in o(t)$. Since $s \in \Sigma_v(G)$, it follows that $v(|s|) =
0$ and that $\Omega_{F_v}$ fixes $c(s)$. Hence $F_v(t) = F_v$, and so we
see that $\co(f_{v,s})_t$ is a prime ideal of $O_{F(t)}$ of degree one
lying above $v$ (cf. \cite[pages 287--289]{Mc1}). Furthermore, if $t
\in T$ and if $v$ is a finite place of $F$ that is totally split in
$F(t)$, then $f_{v,s} \in \bF_v$ for all $c(s) \in o(t)$.

We therefore deduce that if $v$ is finite, the set $\fF_v$ consists
precisely of the invertible prime ideals $\fp = (\fp_t)_{t \in T}$ of
$\Lambda(O_FG)$ with $\fp_{t_1}$ a prime of degree one above $v$ in
$F(t_1)$ for some $t_1 \in T$ with $v(|t_1|) = 0$ and $\fp_t =
O_{F(t)}$ for all $t \neq t_1$. For every $t \in T$, the narrow ray
class modulo $\fa$ of $F(t)$ contains infinitely many primes of degree
one, and this implies that $\bF$ surjects onto $\nCl(\Lambda(O_FG))$
as claimed.
\end{proof}

Our next result describes a transpose Stickelberger homomorphism on
modified narrow ray class groups $\nCl(\Lambda(O_FG))$ for a
suitable choice of $\fa$. Before stating it, we remind the reader that
Proposition \ref{P:unramgp} implies that $\prod_v
\Image(\bpsi^{nr}_{v})$ is a subgroup of $J(K_0(O_FG, F^c))$.

\begin{proposition} \label{P:raytheta}
Let $N$ be an integer, and set $\fa:= N \cdot O_F$. Then if $N$ is
divisible by a sufficiently high power of $|G|$, the idelic transpose
Stickelberger homomorphism
\[
K\Theta^t: J(\Lambda(FG)) \to J(K_0(O_FG, F^c))
\]
induces a homomorphism
\[
\Theta^{t}_{\fa}: \nCl(\Lambda(O_FG)) \to 
\frac{J(K_0(O_FG, F^c))}{\lambda[
  \partial^{1} (K_1(F^cG))] \cdot \prod_v \Image(\bpsi^{nr}_{v})}.
\]
\end{proposition}

\begin{proof}
To show this, we first observe that Proposition \ref{P:intim} implies
that if $N$ is divisible by a sufficiently high power of $|G|$ and $v$
is any finite place of $F$, then we have
\[
\Theta_v^t(U'_{\fa}(\Lambda(O_{F_v}G))) \subseteq
\Det((O_{F_v}G)^{\times}/G) \subseteq \Det(\cH(O_{F_v}G)) = \Image(\bpsi^{nr}_{v}),
\]
and so it follows that 
\[
K\Theta^t(U'_{\fa}(\Lambda(O_FG))) \subseteq \prod_v \Image(\bpsi^{nr}_{v})
\]
in $J(K_0(O_FG, F^c))$.

Suppose that $v$ is a real place of $F$ and that $h \in
\Lambda(F_vG)^{\times}_{+}$. Then for each $\chi \in R_G$, we have
(recalling that $\langle \chi, e \rangle_G = 0$)
\[
\Theta_{v}^{t}(h)(\chi) = \prod_{g \in G} h(c(g))^{\langle \chi, g \rangle_{G}}
> 0,
\]
and so $\Theta_{v}^{t}(h) \in \Hom^{+}_{\Omega_{F_v}}(R_G,
(F_v^c)^{\times})$.  This implies that $K\Theta^t(h) = 1$ in
$K_0(O_{F_v}G, F^c_v)$. Hence $K\Theta^t(U_{\infty}'(\Lambda(O_FG))) =
1$ in $J(K_0(O_FG, F^c))$.
 
It now follows that  $K\Theta^t$ induces a homomorphism
\[
\Theta^{t}_{\fa}:\nCl(\Lambda(O_FG)) \to \frac{J(K_0(O_FG, F^c))}{\lambda[
  \partial^{1} (K_1(F^cG))] \cdot \prod_v \Image(\bpsi^{nr}_{v})},
\]
as claimed.
\end{proof}


\section{Proof of Theorem \ref{T:idgp}} \label{S:idgp}

In this section we shall prove Theorem \ref{T:idgp}. Recall that we
wish to show that if 
\[
\ov{\bpsi^{id}}: J(H^1_t(F, G)) \to \frac{J(K_0(O_FG, F^c))}{\lambda[
 \partial^{1} (K_1(F^cG))] \cdot \prod_v \Image(\bpsi^{nr}_{v})}
\]
denotes the map of pointed sets given by the composition of the map $\bpsi^{id}$
with the quotient homomorphism
\[
q_1:J(K_0(O_FG, F^c)) \to \frac{J(K_0(O_FG, F^c))}{\lambda[
 \partial^{1} (K_1(F^cG))] \cdot \prod_v \Image(\bpsi^{nr}_{v})},
\]
then the image of $\ov{\bpsi^{id}}$ is in fact a group.

To show this, we choose an ideal $\fa = N\cdot O_F$ as in Proposition
\ref{P:raytheta}, and we consider the following diagram:
\begin{equation} \label{E:light}
\begin{CD}
@.  @.  J(H^{1}_{t}(F,G)) \\
@.   @.   @V{\bpsi^{id}}VV \\
\bF @>{\subset}>>   J(\Lambda(FG)) @>{K\Theta^t}>> J(K_0(O_FG, F^c)) \\
@V{q_2}VV      @V{q_2}VV                    @V{q_1}VV      \\
\nCl(\Lambda(O_FG))@=  \nCl(\Lambda(O_FG))
@>{\Theta^{t}_{\fa}}>>
\dfrac{J(K_0(O_FG, F^c))}{\lambda[\partial^{1} (K_1(F^cG))] 
\cdot \prod_v \Image(\bpsi^{nr}_{v})} \\
\end{CD}
\end{equation}
  
Here $q_2$ denotes the obvious quotient map. Proposition
\ref{P:raytheta} shows that the right-hand square commutes, and
Proposition \ref{P:raysurj} shows that the left-most vertical arrow is
surjective.

It follows from Proposition \ref{P:reseq}(a) that
\begin{align*}
q_1 [K\Theta^t(\bF)] &= q_1[\bpsi^{id}(J(H^1_t(F,G)))] \\
&= \Image{\ov{\bpsi^{id}}}.
\end{align*}
On the other hand, we also have that
\[
q_1[K\Theta^t(\bF)] =
\Theta^{t}_{\fa}(\nCl(\Lambda(O_FG))),
\]
which is a group. It therefore follows that $\Image(\ov{\bpsi^{id}})$
is indeed a group, as claimed. 

This completes the proof of Theorem \ref{T:idgp}.
\qed


\section{Realisable classes from field extensions} \label{S:ker}

In this section, after first proving that the kernel of $\bpsi$ is
finite, we explain how a slightly weaker form of Conjecture
\ref{C:conj} implies that every element of $\cR(O_FG)$ may be realised
by the ring of integers of a tame field (as opposed to merely a Galois
algebra) $G$-extension of $F$.

Recall that $G'$ denotes the derived subgroup of $G$, and note that we
may view $H^1(F, G')$ and $H^1(F_v, G')$ as being pointed subsets of
$H^1(F, G)$ and $H^1(F_v, G)$ respectively via taking Galois
cohomology of the exact sequence of groups
\[
0 \to G' \to G \to G^{ab} \to 0.
\]
Recall also that we write $H^{1}_{fnr}(F, G')$ for the set of
isomorphism classes of $G'$-Galois $F$-algebras that are unramified at
all finite places of $F$.

\begin{proposition} \label{P:kernel}
(a) Let $v$ be a finite place of $F$. Then $\Ker(\bpsi_v) \subseteq
  H^{1}_{nr}(F_v, G')$.

(b) Suppose that $[\pi] \in \Ker(\bpsi)$. Then $[\pi] \in H^{1}_{fnr}(F,G')
  \subseteq H^1(F,G)$. We have that $\Ker(\bpsi)$ is finite.

(c) Suppose that $F/\bQ$ is at most tamely ramified at all primes
dividing $|G|$. Then
$H^{1}_{nr}(F, G') \subseteq \Ker(\bpsi)$.

(d) Suppose that $G$ has no irreducible symplectic characters or that
$F$ has no real places. Suppose also that $F/\bQ$ is at most tamely
ramified at all primes dividing $|G|$. Then $\Ker(\bpsi) = H^{1}_{fnr}(F, G')$.
\end{proposition}

\begin{proof}
(a) Let $v$ be a finite place of $F$.  Suppose that $[\pi_v] \in
  H^1_t(F_v, G)$, and that $O_{\pi_v} = O_{F_v}G \cdot a_v$.  Recall
  (see Sections \ref{S:reshom} and \ref{S:cc}) that we have
\[
\bpsi_v: H^{1}_{t}(F_v, G) \to K_0(O_{F_{v}}G, F_{v}^{c}) \simeq
\frac{\Det(F_{v}^{c}G)^{\times}}{\Det(O_{F_v}G)^{\times}},
\]
and that $\bpsi_v([\pi_v]) = [\Det(\br_G(a_v))]$ (see also Definition
\ref{D:Det} and Remark \ref{R:nrd}). It follows that $\bpsi_v([\pi_v])
= 0$ if and only if $\Det(\br_G(a_v)) \in \Det(O_{F_{v}}G)^{\times}$. 

Hence, if $\bpsi_v([\pi_v]) = 0$, then for each $\omega \in
\Omega_{F_{v}}$, we have
\[
\Det(\br_G(a_v)^{-1}) \cdot \Det(\br_G(a_v))^{\omega} = 1,
\]
and so we deduce from \eqref{E:resdiag} that $[\pi_v]$ lies in the
kernel of the natural map $H^1(F_v, G) \to H^1(F_v, G^{ab})$ of
pointed sets. This implies that $[\pi_v] \in H^1(F_v, G')$.  Finally,
we see from \eqref{E:stickfac} and Proposition \ref{P:reseq}(c) that
$\Det(\br_G(a_v)) \in \Det((O_{F_{v}}G)^{\times})$ only if $[\pi_v]
\in H^{1}_{nr}(F_v, G)$. We now conclude that if $[\pi_v] \in
\Ker(\bpsi_v)$, then $[\pi_v] \in H^{1}_{nr}(F_v, G')$.  This
establishes part (a).

(b) Suppose that $[\pi] \in H^1(F,G)$ satisfies $\bpsi([\pi]) =
0$. Then $\bpsi_v(\loc_v([\pi])) = 0$ for each place $v$, and so it
follows from part (a) that $\loc_v([\pi]) \in H^{1}_{nr}(F_v, G')$ for
all finite places $v$ of $F$. Therefore $[\pi] \in H^{1}(F,G')$, and
$\pi$ is unramified at each finite place of $F$, i.e. $[\pi] \in
H^{1}_{fnr}(F, G')$. As there are only finitely many unramified
extensions of $F$ of bounded degree, it follows that $H^{1}_{fnr}(F,
G')$ is finite, and so $\Ker(\bpsi)$ is finite, as claimed.

(c) Suppose that $[\pi] \in H^{1}_{nr}(F, G') \subseteq H^{1}_{t}(F,
G)$, and write $O_{\pi_v} = O_{F_v}G \cdot a_v$ for each finite place
$v$ of $F$. As $\pi$ is unramified at $v$, it follows that
$\Det(\br_G(a_v)) \in \Det(O_{F^{nr}_{v}}G)^{\times}$. Since
$\loc_v([\pi])$ lies in the kernel of the natural map $H^1(F_v, G) \to
H^1(F_v, G^{ab})$, we see from the diagram \eqref{E:resdiag} that the
image of $\Det(\br_G(a_v))$ in $Z(F_vG)^{\times} \backslash
\cH(Z(F_vG))$ is trivial, and so in fact $\Det(\br_G(a_v)) \in
   [\Det(O_{F^{nr}_{v}}G)^{\times}]^{\Omega_{F_v}}$. Note that
   $\Det(\br_G(a_v))$ is defined over the finite, unramified extension
   $F_{v}^{\pi_v}$ of $F_v$ (see \eqref{E:weddcomp}). Let $L$ denote
   an arbitrary finite, unramified extension of $F_v$.

If $v \nmid |G|$, then $O_{L}G$ is an $O_{L}$-maximal order in $LG$,
and we have (see \eqref{E:vdetiii})
\begin{align*}
[\Det(O_LG)^{\times}]^{\Omega_{F_v}} &\simeq [\Hom_{\Omega_L}(R_{G},
  (O_{F^{c}_{v}})^{\times})]^{\Omega_{F_{v}}} \\
&\simeq \Hom_{\Omega_{F_v}}(R_{G},  (O_{F^{c}_{v}})^{\times}) \\
&\simeq \Det(O_{F_v}G)^{\times}.
\end{align*}
If $v \mid |G|$, then because $F/\bQ$ is at most tamely ramified at
all primes dividing $|G|$, it follows from M. J. Taylor's fixed point
theorem for group determinants (see e.g.  \cite[Chapter VIII]{T}) that
\[
[\Det(O_LG)^{\times}]^{\Omega_{F_v}} = \Det(O_{F_v}G)^{\times}.
\]
Hence, for each finite place $v$ of $F$, we see that $\Det(\br_G(a_v))
\in \Det(O_{F_{v}}G)^{\times}$, and so $\bpsi_v([\pi_v]) = 0$
(cf. part (a) above).

Since $H^{1}_{nr}(F_v, G) = 0$ for all infinite places of $F$, it
follows that $\bpsi_v([\pi_v]) = 0$ for all places $v$ of $F$. This in
turn implies that $\lambda(\bpsi([\pi])) = 0$. As the localisation map
$\lambda$ is injective (see Proposition \ref{P:locinj}(a)), it follows
that $\bpsi([\pi]) = 0$. Hence $H^{1}_{nr}(F, G') \subseteq
\Ker(\bpsi)$, as claimed.

(d) The proof of this assertion is very similar to that of part (c)
above, and so here we shall be brief. Suppose that $[\pi] \in
H^{1}_{fnr}(F, G')$. Arguing exactly as in part (c), we see that
$\bpsi_v([\pi]_v) = 0$ for all finite places $v$ of $F$, which in turn
implies that $\lambda_f(\bpsi([\pi])) = 0$. Under our hypotheses,
Proposition \ref{P:locinj}(b) implies that the localisation map
$\lambda_f$ is injective, and so $\bpsi([\pi]) = 0$. Hence we see that
$H^{1}_{fnr}(F, G') \subseteq \Ker(\bpsi)$, and so it follows from
part (b) above that in fact $H^{1}_{fnr}(F, G') = \Ker(\bpsi)$, as
asserted.
\end{proof}

\begin{definition}   \label{D:ram}
Suppose that $x \in \LC(O_FG)$ (see Definition \ref{D:cc}). We say that
  $x$ is \textit{unramified} (respectively \textit{ramified}) at a
  place $v$ of $F$ if $\lambda_v(x) \in \Image(H^{1}_{nr}(F_v,
  G))$ (respectively if $\lambda_v(x) \notin \Image(H^{1}_{nr}(F_v,
  G))$). 

If $S$ is any finite set of places of $F$, we denote the set
of $x \in \LC(O_FG)$ that are unramified at all places in $S$ by
$\LC(O_FG)_S$. \qed
\end{definition}

Before stating our next result, it will be helpful to introduce the
following notation. Suppose that $x \in \LC(O_FG)$ and let $[(x_v)_v,
  x_\infty] \in J(K_1(FG)) \times \Det(F^cG)^{\times}$ be a
representative of $x$. Then $\lambda(x) \in J(K_0(O_FG, F^c))$ is
represented by the element $(x_v \cdot \loc_v(x_{\infty})) \in \prod_v
\Det(F^{c}_{v}G)^{\times}$. Hence it follows from Theorem \ref{T:stickfac} and
Proposition \ref{P:reseq}(a) that we have an equality
\begin{equation} \label{E:dec}
[(x_v \cdot \loc_v(x_\infty))] = [a(x)] \cdot K\Theta^t(f(x))
\end{equation}
in $J(K_0(O_FG, F^c))$, where $a(x) = (a(x)_v) \in \prod_v
\Det(\cH(O_{F_v}G))$ and $f(x) \in \bF$.

\begin{definition} \label{D:fullram}
We say that $x \in \LC(O_FG)$ is \textit{fully ramified} if $f(x)$ is
full (see Definition \ref{D:fprime}---note in particular that this
does \textit{not} mean that $x$ is ramified at all places of $F$,
which would of course be absurd!).  \qed
\end{definition}

Let us also recall that $\partial^{0}(x) \in \Cl(O_FG)$ is represented
by the idele $(x_v)_v \in J(K_1(FG))$ (see Remark \ref{R:repid}).

\begin{proposition} \label{P:finen}
Suppose that $S$ is any finite set of places of $F$, and that
$x \in \LC(O_FG)$. Then there exist infinitely many $y \in
\LC(O_FG)_S$ with $\partial^{0}(y) = \partial^{0}(x)$ in $\Cl(O_FG)$.
Hence we have
\begin{equation} \label{E:sreal}
\partial^{0}(LC(O_FG)) = \partial^{0}(LC(O_FG)_S).
\end{equation}
\end{proposition}

\begin{proof}
Let $\fa$ be an ideal of $F$ chosen as in Proposition \ref{P:raytheta}
(so $\fa$ is divisible by a sufficiently high power of $|G|$ for the
homomorphism $\Theta^{t}_{\fa}$ to be defined). Proposition
\ref{P:raysurj} implies that there are infinitely many choices of $g
\in \bF$ such that $\Supp(g)$ is disjoint from $S$ and $g$ lies in the
same modified narrow ray class modulo $\fa$ as $f(x)$, i.e.
\[
f(x) \equiv g \pmod {\Lambda(FG)^{\times} \cdot
  U_{\fa}'(\Lambda(O_FG)) \cdot U_{\infty}'(\Lambda(O_FG))_{+}}.
\]
Hence for any such $g$, we have
\[
K\Theta^t(f(x)) = K\Theta^t(\beta \cdot b \cdot g)
\]
where $\beta \in \Lambda(FG)^{\times}$ and $b = (b_v) \in
U_{\fa}'(\Lambda(O_FG)) \cdot U_{\infty}'(\Lambda(O_FG))_{+}$. Now
$K\Theta^t(\beta) \in \partial^1(K_1(F^cG))$ (see \eqref{E:mstickhom},
\eqref{E:mstickhomi}, and \eqref{E:stickhom}), while $K\Theta^t(b)$
lies in the image of $\prod_v \Det(\cH(O_{F_v}G))$ in
$J(K_0(O_FG,F^c))$, by virtue of our choice of $\fa$. We therefore see
from \eqref{E:dec} that we have the equality
\[
[(x_v \cdot \loc_v(x_{\infty}))] \cdot K\Theta^t(\beta)^{-1} = [a(x)]
\cdot K\Theta^t(b) \cdot K\Theta^t(g)
\]
in $J(K_0(O_FG, F^c))$. Then the class 
\[
y = [(x_v \cdot \loc_v(x_{\infty}))] \cdot
K\Theta^t(\beta)^{-1}
\]
in $J(K_0(O_FG, F^c))$ satisfies the desired conditions.

The final assertion follows immediately from the exact sequence
\eqref{E:rkes}.
\end{proof}

\begin{proposition}  \label{P:full}
Suppose that $S$ is any finite set of places of $F$, and that $x \in
\LC(O_FG)$. Then there exist infinitely many $y \in \LC(O_FG)_S$ such
that $y$ is fully ramified and $\partial^0(y) = \partial^0(x)$ in
$\Cl(O_FG)$.
\end{proposition}

\begin{proof}
This is a generalisation of \cite[Proposition 6.14]{Mc}, and it may be
proved in the same way as \cite[Proposition 7.4]{B}.

We begin by constructing a full element $h$ of $\bF$ as follows.  Let
$M/F$ be a finite Galois extension such that $\Omega_M$ acts trivially
on $\cC(G(-1))$. For each $s \in G$, choose a place $v(s)$ of $F$ that
splits completely in $M/F$; the Chebotarev density theorem implies
that this may be done so that the places $v(s)$ are distinct and
disjoint from $S$. Then the element $h = \prod_{s \in G} f_{v(s), s}$
is full.

Next, we choose an ideal $\fa$ of $F$ as in Proposition
\ref{P:raytheta} and observe that Proposition \ref{P:raysurj} implies
that there are infinitely many choices of $g \in \bF$ with $\Supp(g)$
disjoint from $S \cup \Supp(h)$ such that $g$ lies in the same
modified narrow ray class of $\Lambda(O_FG)$ modulo $\fa$ as $f(x)
\cdot h^{-1}$. Then, for any such $g$, we have that
\[
f(x) \equiv g \cdot h \pmod{\Lambda(FG)^{\times} \cdot
  U_{\fa}(\Lambda(O_FG)) \cdot U_{\infty}'(\Lambda(O_FG))_{+}},
\]
and $g \cdot h \in \bF$ is full. Now exactly as in the proof of
Proposition \ref{P:finen} we may replace $f(x)$ by $g \cdot h$ in
\eqref{E:dec}, changing the other terms in the equality as needed, to
obtain $y \in K_0(O_FG, F^c)$ satisfying the stated conditions.
\end{proof} 

\begin{theorem}  \label{T:fieldres}
Let $S$ be any finite set of places of $F$, and suppose that
Conjecture \ref{C:conj} holds for $\LC(O_FG)_S$, i.e. that
\begin{equation}  \label{E:inc}
\LC(O_FG)_S \subseteq K\cR(O_FG) = \Image(\bpsi).
\end{equation} 
Then $\cR(O_FG)$ is a subgroup of $\Cl(O_FG)$. If $c \in
\cR(O_FG)$, then there exist infinitely many $[\pi] \in H^1_t(F, G)$
such that $F_{\pi}$ is a field and $(O_{\pi}) = c$. The extensions
$F_{\pi}/F$ may be chosen to have ramification disjoint from $S$.
\end{theorem}

\begin{proof}
To prove the first assertion, it suffices to show that, under the
given hypotheses, we have
\begin{equation} \label{E:now}
\partial^{0}(\LC(O_FG)) = \cR(O_FG)
\end{equation}
(cf. the proof of Theorem \ref{T:congp}, especially \eqref{E:later}).

  We plainly have $\cR(O_FG) \subseteq \partial^0(\LC(O_FG))$. Suppose
  that $x \in \LC(O_FG)$, and set $c_x = \partial^0(x)$. Then
  Proposition \ref{P:full} implies that there exists $y \in
  \LC(O_FG)_S$ with $\partial^0(y) = c_x$. By hypothesis, we have $y
  \in \Image(\bpsi)$, and so $\partial^0(y) = c_x \in \cR(O_FG)$. This
  implies that $\partial^0(\LC(O_FG)) \subseteq \cR(O_FG)$. Hence
  \eqref{E:now} holds, and so $\cR(O_FG)$ is a subgroup of
  $\Cl(O_FG)$, as claimed.

Next, we observe that if $c \in \cR(O_FG)$, then \eqref{E:now} and
Proposition \ref{P:full} imply that there are infinitely many $x \in
\LC(O_FG)_S$ such that $x$ is fully ramified and $\partial^0(x) =
c$. For each such $x$, our hypotheses imply that there exists $\pi_x
\in \Hom(\Omega_F, G)$ with $[\pi_x] \in H^1_t(F, G)$ and
$\bpsi([\pi_x]) = x$. The set of primes that ramify in $F_{\pi_x}/F$
is equal to $\Supp(f(x))$, and so $F_{\pi_x}/F$ has ramification
disjoint from $S$. As $f(x)$ is full, we see that for each
non-identity element $s \in G$, there is a place $v(s) \in
\Supp(f(x))$ such that $\pi_x(\sigma_{v(s)}) \in c(s)$
(cf. \eqref{E:rmap} and Proposition \ref{P:reseq} (a) and (b)). Hence
$\Image(\pi_x)$ has non-trivial intersection with every conjugacy
class of $G$ and so is equal to the whole of $G$, by a lemma of Jordan
(see \cite[p. 435, Theorem 4']{Se2003}). Therefore $\pi_x$ is
surjective, and so $F_{\pi_x}$ is a field. This establishes the
result.
\end{proof}


\section{Abelian groups} \label{S:ab}

In this section we shall prove that Conjecture \ref{C:cc} holds for
abelian groups. We shall also show that the map $\bpsi$ is injective
in this case.

Let $G$ be abelian, and suppose that $L$ is any finite extension of
$F$ or of $F_v$ for some place $v$ of $F$. As $G$ is abelian, the
reduced norm map induces isomorphisms
\begin{equation} \label{E:goodab}
(LG)^{\times} \simeq \Det(LG)^{\times}, \quad (O_LG)^{\times} \simeq
\Det(O_LG)^{\times},\quad (L^cG)^{\times} \simeq \Det(L^cG)^{\times}.
\end{equation}

For each finite place $v$ of $F$, Lemma \ref{L:klocdes} and
\eqref{E:goodab} imply that there are isomorphisms
\[
K_0(O_{F_v}G, F^{c}_{v}) \simeq
\frac{\Det(F^{c}_{v}G)^{\times}}{\Det(O_{F_v}G)^{\times}} \simeq
\frac{(F^{c}_{v}G)^{\times}}{(O_{F_v}G)^{\times}}.
\]

\begin{proposition} \label{P:lab}
Let $G$ be abelian, and suppose that $v$ is a finite place of
$F$. Then the map $\bpsi_v$ is injective.
\end{proposition}

\begin{proof}
Suppose that $[\pi_{v,i}] \in H^1_t(F_v, G)$ ($i = 1, 2$), with
$O_{\pi_{v,i}} = O_{F_v}G \cdot a_{v,i}$. Then $\bpsi_v([\pi_{v,i}]) =
[\br_G(a_{v,i})]$ in $(F^c_vG)^{\times}/(O_{F_v}G)^{\times}$. Hence if
$\bpsi([\pi_{v,1}]) = \bpsi([\pi_{v,2}])$, then we have
$\br_G(a_{v,1}) \cdot \br_G(a_{v,2})^{-1} \in
(O_{F_v}G)^{\times}$. This implies that $[\pi_{1,v}] = [\pi_{2,v}]$ in
$H^1_t(F_v, G)$, and so it follows that $\bpsi_v$ is injective, as
claimed.
\end{proof}

Again because $G$ is abelian, the pointed set of resolvends $H_t(LG)$
is an abelian group, and the exact sequences \eqref{E:res3} and
\eqref{E:res4} show that there is an isomorphism
\begin{equation} \label{E:abg}
\tau: H^1_t(L, G) \xrightarrow{\sim} \frac{H_t(LG)}{(LG)^{\times}}
\end{equation}
defined as follows: if $[\pi] \in H^1_t(L, G)$ with $L_{\pi} = LG
\cdot b_{\pi}$, then $\tau([\pi]) = [\br_G(b_{\pi})]$.

Note also that Theorem \ref{T:kdes}(b) and \eqref{E:goodab} imply that
$K_0(O_FG, F^c)$ is isomorphic to the cokernel of the homomorphism
\[
\Delta_{O_FG,F^c}: (FG)^{\times} \to
\frac{J(FG)}{\prod_v(O_{F_v}G)^{\times}} \times (F^cG)^{\times}
\]
induced by 
\[
(FG)^{\times} \to J(FG) \times (F^cG)^{\times};\quad x \mapsto
((\loc_v(x))_v, x^{-1}).
\]

\begin{theorem} \label{T:ab}
Conjecture \ref{C:cc} is true when $G$ is abelian.
\end{theorem}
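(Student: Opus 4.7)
The inclusion $\lambda(\bcR(O_FG)) \subseteq \Image(\lambda) \cap \Image(\bpsi^{id})$ is essentially formal: if $F_\pi/F$ is tame then each localisation $F_{\pi,v}/F_v$ is tame, and by construction $\lambda_v \circ \bpsi$ agrees with $\bpsi_v$ composed with the restriction $H^1_t(F,G) \to H^1_t(F_v,G)$. So the content of Theorem \ref{T:ab} is the reverse inclusion, and this is where the abelian hypothesis will be used.

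The plan is to start with $x \in K_0(O_FG, F^c)$ such that $\lambda(x) = \bpsi^{id}((\pi_v)_v)$ for some idele $(\pi_v)_v \in J(H^1_t(F,G))$, and to produce a global $[\pi] \in H^1_t(F,G)$ with $\bpsi(\pi) = x$. By Proposition \ref{P:locinj}, $\lambda$ is injective, so it suffices to match $\bpsi(\pi)$ to $x$ after localisation at every finite place. I would represent $x$ via Theorem \ref{T:kdes} as a class $[(x_v)_v, x_\infty] \in J(K_1(FG)) \times K_1(F^cG)$ modulo the image of $\Delta_{O_FG, F^c}$ and the subgroups $K_1(O_{F_v}G)$. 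Because $G$ is abelian, $FG$ is commutative, the reduced norm is well-behaved, and the resolvend machinery of Proposition \ref{P:abres} together with diagram \eqref{E:resdiag} identifies the relevant quotients of $K_1$ with groups of (classes of) resolvends. Under this dictionary, the hypothesis $\lambda(x) \in \Image(\bpsi^{id})$ becomes the assertion that $(x_v)_v$ agrees, modulo local unit resolvends and one global resolvend, with the idelic reduced resolvend attached to the tame cocycles $(\pi_v)_v$.

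For abelian $G$, globalising idelic data of this shape is exactly the content of \cite[Theorem 6.7]{Mc1}, which realises any class in the kernel of the Stickelberger homomorphism by a tame Galois $G$-extension of $F$ with prescribed ramification behaviour. I would apply that theorem to the idelic resolvend attached to $(\pi_v)_v$, using Siviero's approximation theorem (the variant of \cite[Theorem 2.14]{Mc1} recalled in Section \ref{S:det}) to replace $x_\infty$, up to a global resolvend and up to local resolvends of units, by the resolvend $\br_G(a)$ of a genuine normal integral basis generator $a$ of a tame global Galois $G$-algebra $F_\pi/F$ that matches $(\pi_v)_v$ at all of its ramified places and is unramified elsewhere. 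The unramified-outside-$S$ assertion rests on a Chebotarev density argument on a suitable ray class field of $F$, which is the abelian ingredient that is not available for general $G$. By construction $\lambda_v(\bpsi(\pi)) = \lambda_v(x)$ at every finite $v$, and injectivity of $\lambda$ then forces $\bpsi(\pi) = x$.

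The main obstacle I anticipate is the translation between the $K$-theoretic idelic description supplied by Theorem \ref{T:kdes} and the cohomological/resolvend description of \cite{Mc1}. These encode the same underlying reduced-norm-of-resolvend data but organise it differently, so one must thread factors carefully through diagrams \eqref{E:resdiag} and \eqref{E:unramresdiag}, and must ensure that the ``infinite'' component $x_\infty$ and the ``finite'' components $(x_v)_v$ of the idelic description of $x$ are matched, respectively, with the resolvend of the normal integral basis generator and with the local resolvends produced by \cite{Mc1}. Beyond this bookkeeping, the substantive arithmetic input — Stickelberger plus approximation plus Chebotarev — is not new here; it is the machinery of \cite{Mc1}, and the proof of Theorem \ref{T:ab} amounts to checking that the refinement from $\Cl(O_FG)$ to $K_0(O_FG, F^c)$ does not destroy realisability in the abelian case.
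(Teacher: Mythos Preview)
Your plan would probably succeed, but it is far heavier than what the paper actually does, and it misses the key simplification that the abelian hypothesis buys. The paper's proof is only a few lines. Since $G$ is abelian, $K_1(EG)\simeq (EG)^\times$ for every extension $E$ of $F$, so in a representative $[(x_v)_v,x_\infty]$ of $x$ one may take $x_v\in (F_vG)^\times$ and $x_\infty\in (F^cG)^\times$. Local cohomologicality at $v$ says $x_v\cdot\loc_v(x_\infty)\in H_t(F_vG)$ up to an $(O_{F_v}G)^\times$-factor, hence in $H_t(F_vG)$ outright. But $x_v$ is $\Omega_{F_v}$-fixed, so this forces $\loc_v(x_\infty)^{-1}\cdot\loc_v(x_\infty)^\omega\in G$ for all $\omega\in\Omega_{F_v}$ and $\loc_v(x_\infty)$ to be fixed by wild inertia. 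As this holds at every finite $v$, the element $x_\infty$ itself already lies in $H_t(FG)$: it is the resolvend of a normal basis generator of a global tame $G$-algebra $F_\pi/F$, and one checks directly that $x=\bpsi([\pi])$.

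In other words, no globalisation machinery is required: the infinite component $x_\infty$ is \emph{forced} by the local conditions to be a global tame resolvend, because for abelian $G$ the passage from resolvends to $K_1$ via the reduced norm loses nothing. Your route via \cite[Theorem~6.7]{Mc1}, Siviero's approximation and Chebotarev is not wrong in spirit, but it would effectively re-run McCulloh's construction in a refined $K$-theoretic setting, with the attendant bookkeeping you yourself flag as the main obstacle, only to end up producing a $\pi$ whose resolvend agrees with $x_\infty$; the paper observes that $x_\infty$ was that resolvend all along.
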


\begin{proof}
Suppose that $x \in \LC(O_FG)$, and let $[(x_v)_v, x_{\infty}] \in
J(FG) \times (F^cG)^{\times}$ be a representative of $x$. We shall
explain how to construct an element $[\pi] \in H^1_t(F, G)$ such that
$\lambda_v(x) = \lambda_v(\bpsi([\pi]))$ for all finite places $v$ of
$F$. Since $G$ is abelian, and therefore admits no irreducible
symplectic characters, this will imply that $x = \bpsi([\pi])$ (see
Proposition \ref{P:locinj}(b)).

For each $v$, we have that $x_v \cdot \loc_v(x_{\infty}) \in
H_t(F_vG)$.  As $x_v \in (F_vG)^{\times}$, this implies that
$\loc_v(x_{\infty}) \in H_t(F_vG)$ for each $v$. It follows from
Proposition \ref{P:reshasse} that $x_{\infty} \in H(FG)$, and we see
in addition that in fact $x_{\infty} \in H_t(FG)$. Hence $x_{\infty}$
is the resolvend of a normal basis generator of a tame extension
$F_{\pi}/F$. Set $\pi_v:= \loc_v(\pi)$. Then for each finite $v$, we have
\[
\tau(\bpsi^{-1}_{v}(\lambda_v(x))) = [\loc_v(x_{\infty})] =
\tau([\pi_v])
\]
in $H_t(F_vG)/(F_vG)^{\times}$, which in turn implies that
\[
\lambda_v(x) = \bpsi_v([\pi_v]) = \lambda_v(\bpsi([\pi])).
\]
Hence $x = \bpsi([\pi])$, as required.
\end{proof}

\begin{proposition} \label{P:gab}
If $G$ is abelian, then the map $\bpsi$ is injective.
\end{proposition}

\begin{proof}
Let $[\pi] \in H^1_t(F_v, G)$, and suppose that $[(x_v)_v, x_{\infty}]
\in J(K_1(FG)) \times (F^cG)^{\times}$ is a representative of
$\bpsi([\pi])$. Then it follows from the proof of Theorem \ref{T:ab}
that $\tau([\pi]) = x_{\infty}$ in $H_t(FG)/(FG)^{\times}$. Since
$\tau$ is an isomorphism, we deduce that $\bpsi$ is injective.
\end{proof}


\section{Neukirch's Lifting Theorem}   \label{S:neu}

Our main purpose in this section is to describe certain results,
mainly from \cite{Neu}, that will be used in the proof of Theorem
\ref{T:D}. We refer the reader to \cite{Neu} or \cite[IX.5]{NSW} for
full details regarding these topics.

Let $D$ be an arbitrary finite group. Consider the category $\cD$ of
homomorphisms $\eta: \cG \to D$ of arbitrary profinite groups $\cG$
into $D$ in which a morphism between two objects $\eta_1: \cG_1 \to D$
and $\eta_2: \cG_2 \to D$ is defined to be a homomorphism $\nu: \cG_1
\to \cG_2$ such that $\eta_1 = \eta_2 \circ \nu$. We say that two such
morphisms $\nu_i: \cG_1 \to \cG_2$ ($i = 1, 2$) are
\textit{equivalent} if there is an element $k \in \Ker(\eta_2)$ such
that $\nu_1(\omega) = k \cdot \nu_2(\omega) \cdot k^{-1}$ for all
$\omega \in \cG_1$. Write $\cHom_{D}(\cG_1, \cG_2)$ for the set of
equivalence classes of homomorphisms $\cG_1 \to \cG_2$, and
$\cHom_{D}(\cG_1, \cG_2)_{\epi}$ for the subset of $\cHom_{D}(\cG_1,
\cG_2)$ consisting of equivalence classes of surjective
homomorphisms. 

Suppose now that we have an exact sequence
\[
0 \to B \to G \xrightarrow{q} D \to 0
\]
with $B$ abelian, and that $L$ is a number field or a local field. Let
$h: \Omega_L \to D$ be a fixed homomorphism. We view $\Omega_L
\xrightarrow{h} D$ and $G \xrightarrow{q} D$ as being elements of
$\cD$.  The group $D$ acts on $B$ via inner automorphisms, and this in
turn induces an action of $\Omega_L$ on $B$ via $h$. We write $L(B)$
for the smallest extension of $L$ such that $\Omega_{L(B)}$ fixes $B$
(i.e. $L(B)$ is the field of definition of $B$).

It may be shown that the group $H^1(L, B)$ acts on $\cHom_D(\Omega_L,
G)$ in the following way. Let $z \in Z^1(L, B)$ be any $1$-cocycle
representing $[z] \in H^1(L, B)$, and let $\nu \in \Hom(\Omega_L, G)$
be any homomorphism, representing an element $[\nu] \in
\cHom_D(\Omega_L, G)$.
Define $z \cdot \nu: \Omega_L \to G$ by
\[
(z \cdot \nu)(\omega) = z(\omega) \cdot \nu(\omega)
\]
for all $\omega \in \Omega_L$. It is not hard to check that 
\[
h = q \circ (z \cdot \nu),
\]
and that the element $[z \cdot \nu] \in \cHom_{D}(\Omega_L, G)$ is
independent of the choices of $z$ and $\nu$. It may also be shown that
$\cHom_D(\Omega_L, G)$ is a principal homogeneous space over $H^1(L,
B)$.

For a number field $F$, and a finite place $v$ of $F$, we let
$\cHom_{D}(\Omega_{F_v}, G)_{nr}$ denote the set of classes of
homomorphisms $\Omega_{F_v} \to G$ that are trivial on $I_v$. We write
$J_f(\cHom_{D}(\Omega_F, G))$ for the restricted direct product over
all finite places of $F$ of the sets $\cHom_{D}(\Omega_{F_v}, G)$ with
respect to the subsets $\cHom_{D}(\Omega_{F_v}, G)_{nr}$.

Now we can state Neukirch's Lifting Theorem.

\begin{theorem}  \label{T:nl}
Let $F$ be a number field and let $h: \Omega_F \to D$ be a fixed,
surjective homomorphism. Suppose that
\[
0 \to B \to G \xrightarrow{q} D \to 0
\]
is an exact sequence for which $B$ is a simple
$\Omega_F$-module. (This implies that $l \cdot B = 0$ for a unique
prime $l$.) Assume that the field of definition $F(B)$ of $B$ contains
no non-trivial $l$-th roots of unity, and that
$J_f(\cHom_{D}(\Omega_{F}, G)) \neq \emptyset$. Let $S$ be any finite
set of finite places of $F$. Then the natural map
\[
\cHom_{D}(\Omega_F, G)_{\epi} \to \prod_{v \in S}\cHom_{D}(\Omega_{F_v}, G)
\]
is surjective. 
\end{theorem}

\begin{proof}
This is \cite[Main Theorem, p. 148]{Neu}. 
\end{proof}

The following result implies that $\cHom_{D}(\Omega_{F_v}, G) \neq
\emptyset$ for all but finitely many $v$.

\begin{proposition}  \label{P:nrlift}
(\cite[Lemma 5]{Neu})
Let $F$ be a number field, and let $v$ be a finite place of
$F$. Suppose that $\cG_1 \to \cG_2$ is a surjective homomorphism of
arbitrary profinite groups, and that there exists an unramified
homomorphism $h_v: \Omega_{F_v} \to \cG_2$. Then
$\cHom_{\cG_{2}}(\Omega_{F_v}, \cG_1)_{nr} \neq \emptyset$, and so
$\cHom_{\cG_{2}}(\Omega_{F_v}, \cG_1) \neq \emptyset$ also.
\end{proposition}

\begin{proof}
If $h_v$ is unramified, then $h_v$ factors through
$\Omega_{F_v}/I_v \simeq \hat{\bZ}$, and a map $\hat{\bZ} \to \cG_2$ may
always be lifted to a map $\hat{\bZ} \to \cG_1$ by lifting the image of a
topological generator of $\hat{\bZ}$. 
\end{proof}

We now turn to two results of a local-global nature that will play a
role in the proof of Theorem \ref{T:domco}. In order to describe them, we
let $\Gamma$ be a finite abelian group equipped with an action of
$\Omega_F$ such that $\Gamma$ is a simple $\Omega_F$-module. Then $l
\cdot \Gamma = 0$ for a unique prime $l$. Write $F(\Gamma)$ for the
field of definition of $\Gamma$.

\begin{theorem}  \label{T:ncl}
Let $M/F$ be a Galois extension with $F(\Gamma) \subseteq M$ and
$\mu_{l} \nsubseteq M$, and let $\cN /M$ be a finite abelian
extension. Let $S$ be a finite set of finite places of $F$, and
suppose given an element $y_v \in H^1(F_v, \Gamma)$ for each $v \in
S$. Then there exists an element $z \in H^1(F, \Gamma)$ satisfying the
following local conditions:

(i) $z_v = y_v$ for each $v \in S$.

(ii) If $v \notin S$, then $z_v$ is cyclic (i.e. is trivialised
by a cyclic extension of $F_v$), and if $z_v$ is ramified, then
$v$ splits completely in $\cN/F$.
\end{theorem}

\begin{proof} 
This is \cite[Theorem 1]{Neu}.
\end{proof}

In order to state our next result, we introduce the following
notation. 

\begin{definition} \label{D:ohell}
Let $T := \{ v_1,\ldots, v_r \}$ be any finite set of finite places of
$F$ containing all places that ramify in $F(\Gamma)/F$ and all places
above $l$. Let $\fp_i$ denote the prime ideal of $F$ corresponding to
$v_i$. Proposition \ref{P:kill} implies that we may choose an integer
$N = N(T)$ such that for each $1\leq i \leq r$ and for every place $w$
of $F(\Gamma)$ lying above $v_i$, we have
\[
\Hom_{\Omega_{F(\Gamma)_w}}(A_{\Gamma}, U_{\fp^{N}_{i}}(O_{F(\Gamma)^c_w})) \subseteq
\rag[\Hom_{\Omega_{F(\Gamma)_w}}(R_{\Gamma}, O^{\times}_{F(\Gamma)^c_w})].
\]
Set 
\[
\fa = \fa(T) = \prod_{i=1}^{r} \fp_i.
\]
Let $F(\fa^N)$ denote the ray class field of $F$ modulo $\fa^N$.  \qed
\end{definition}

\begin{theorem} \label{T:bcl}
Let $v \notin T$ be any finite place of $F$ that splits completely in
$F(\fa^N)$, and suppose that $s$ is any non-trivial element of
$\Gamma$. Then there is an element $b = b(v;s) \in H^1(F, \Gamma)$
satisfying the following local conditions:

(i) $\loc_{v_i}(b) = 0$ for $1 \leq i \leq r$;

(ii) $b \mid_{I_v} = \ti{\vphi}_{v,s}$ (see Remark \ref{R:ramphi});

(iii) $b$ is unramified away from $v$.
\end{theorem}

\begin{proof}
Let $\fp$ be the prime ideal of $F$ corresponding to $v$. Our
hypotheses on $v$ imply that $\fp$ is principal, with $\fp \equiv 1
\pmod{\fa^N}$.  Set $M := F(\Gamma)$. As $\Gamma$ is abelian, we have
that $\cH(M\Gamma) \simeq \Hom_{\Omega_{M}}(A_{\Gamma},
(M^c)^{\times})$ (cf.  \eqref{E:zres}). Let $\vpi$ be a generator of
$\fp$, and define $\rho \in \Hom_{\Omega_M}(A_{\Gamma}, (M^c)^{\times})$ by
\[
\rho(\alpha) = \vpi^{\langle \alpha, s \rangle_{\Gamma}}.
\]
(This homomorphism is $\Omega_M$-equivariant because $\Omega_M$ fixes
$\Gamma$.) Then $\rho$ is the reduced resolvend of a normal basis
generator of an extension $M_{\pi(\rho)}/M$ corresponding to
$[\pi(\rho)] \in H^1(M, \Gamma)$. Since $\fp \equiv 1 \pmod{\fa^N}$,
for each place $w$ of $M$ lying above a place $v_i$ in $T$, we have
\[
\loc_w(\rho) \in \Hom_{\Omega_{M_w}}(A_{\Gamma}, U_{\fp^{N}_{i}}(O_{M^c_w})) \subseteq
\rag[\Hom_{\Omega_{M_w}}(R_{\Gamma}, O^{\times}_{M^c_w})],
\]
and so it follows that $\loc_{w}(\pi(\rho)) = 0$ (see
\eqref{E:globcoh}). In particular, $\pi(\rho)$ is unramified at all
places above $T$.

For all places $w'$ of $M$ not lying above $T$ or $v$ we have that
\[
\loc_{w'}(\rho) \in \Hom_{\Omega_{M_{w'}}}(A_{\Gamma},
O^{\times}_{M^{c}_{w'}}),
\]
and so $\pi(\rho)$ is unramified at $w'$. This implies that
$\pi(\rho)$ is unramified away from $v$, since we have already seen
that $\pi(\rho)$ does not ramify at any place above $T$. It is also
easy to see that
\[
b \mid_{I_{w(v)}} = \ti{\vphi}_{w(v),s}
\]
for any place $w(v)$ of $M$ lying above $v$ (cf. the proof of
Proposition \ref{P:reseq}(a)).

As $\vpi \in F$, we have that $\pi(\rho) \in H^1(M,
\Gamma)^{\Gal(M/F)}$. Since $\Gamma^{\Omega_{F}} = 0$ (because
$\Gamma$ is a simple $\Omega_F$-module), the restriction map $H^1(F,
\Gamma) \to H^1(M, \Gamma)$ is injective and induces an isomorphism
$H^1(F, \Gamma) \simeq H^1(M, \Gamma)^{\Gal(M/F)}$.  Hence $\pi(\rho)$
is the image of an element $b \in H^1(F, \Gamma)$ satisfying the
conditions (i), (ii) and (iii) of the theorem.
\end{proof}


\section{Soluble groups}   \label{S:odd}

In this section we shall use Neukirch's Lifting Theorem to prove a
result (see Theorem \ref{T:domco} below) that implies Theorem
\ref{T:D} of the Introduction. In order to describe this result, it
will be helpful to formulate the following definition.

\begin{definition} \label{D:R}
Let $S$ be any finite (possibly empty) set of places of $F$.  We shall
say that $\LC(O_FG)_S$ \textit{satisfies Property R} if the following
holds.  Suppose given any fully ramified $x \in \LC(O_FG)_S$. For each
finite place $v$ of $F$, suppose also given a homomorphism $\pi_{v,x}
\in \Hom(\Omega_{F_v}, G)$ such that $[\pi_{v,x}] \in H^1_t(F_v, G)$
and $\lambda_v(x) = \bpsi_v([\pi_{v,x}])$. (Note that in general, such
a choice of $\pi_{v,x}$ is not unique.) Then there exists $\Pi \in
\Hom(\Omega_F, G)$ with $[\Pi] \in H^1_t(F, G)$ such that

(a) $x = \bpsi([\Pi])$;

(b) $\Pi \mid_{I_v} = \pi_{v,x} \mid_{I_v}$ for each finite place $v$
of $F$.

(So in particular, $x$ is cohomological.)
\qed
\end{definition}

\begin{proposition} \label{P:rab}
If $G$ is abelian, then $\LC(O_FG)$ satisfies Property R.
\end{proposition}

\begin{proof}
We shall in fact prove a slightly stronger result. Suppose that $G$
is abelian, and let $x \in \LC(O_FG)$. (Note that we do not assume
that $x$ is fully ramified.) Then Theorem \ref{T:ab} implies that $x$
is cohomological. As $G$ is abelian, the maps
$\bpsi$ and $\bpsi_v$ are injective (see Propositions
\ref{P:lab} and \ref{P:gab}). Hence it follows that there is a unique $[\Pi] \in
H^1_t(F, G)$ such that $x = \bpsi([\Pi])$, and a unique $[\pi_{v,x}]
\in H^1_t(F_v, G)$ such that $\lambda_v(x) = \bpsi_v([\pi_{v,x}])$.
We therefore see that
\[
\lambda_v(x) =  \bpsi_v([\Pi_v]) = \bpsi([\pi_{v,x}]),
\]
and so $\Pi_v = \pi_{v,x}$. This implies that $\LC(O_FG)$ satisfies
Property R.
\end{proof}

\begin{theorem} \label{T:rgreat}
Suppose that $\LC(O_FG)_S$ satisfies Property R. Then $\cR(O_FG)$ is a
subgroup of $\Cl(O_FG)$.  If $c \in \cR(O_FG)$, then there exist
infinitely many $[\pi] \in H^1_t(F, G)$ such that $F_{\pi}$ is a field
and $(O_{\pi}) = c$. The extensions $F_{\pi}/F$ may be chosen to have
ramification disjoint from $S$.
\end{theorem}

\begin{proof}
This is an immediate consequence of Theorem \ref{T:fieldres}.
\end{proof}

Our proof of Theorem \ref{T:D} rests on the following result.

\begin{theorem}  \label{T:domco}
Suppose that there is an exact sequence
\[
0 \to B \to G \to D \to 0,
\]
where $B$ is an abelian minimal normal subgroup of $G$ with $l \cdot B
= 0$ for an odd prime $l$. Let $S$ be any finite set of finite places
of $F$ containing all places dividing $|G|$.
Assume that the following conditions hold:

(i) The set $\LC(O_FD)_S$ satisfies Property R;

(ii) We have $(|G|, h_F) = 1$, where $h_F$ denotes the class number of
$F$;

(iii) Either $G$ admits no irreducible symplectic characters, or $F$
has no real places;

(iv) The field $F$ contains no non-trivial $l$-th roots of unity.

Then $\LC(O_FG)_S$ satisfies Property R.
\end{theorem}

\begin{proof}
We shall establish this result in several steps, one of which
crucially involves Neukirch's Lifting Theorem (see Theorem
\ref{T:nl}).

Suppose that $x \in \LC(O_FG)_S$ is fully ramified. For each finite
place $v$ of $F$, choose $\pi_{v,x} \in \Hom(\Omega_{F_v}, G)$ such
that $[\pi_{v,x}] \in H^1_t(F_v, G)$ with
\[
\lambda_v(x) = \bpsi_{v}([\pi_{v,x}]).
\]
The choice of $\pi_{v,x}$ is not unique. However, if $a(\pi_{v,x})$ is
any normal integral basis generator of $F_{\pi_{v,x}}/F_v$, with
Stickelberger factorisation (see Definition \ref{D:stickfac})
\begin{equation} \label{E:sf}
\br_G(a(\pi_{v,x})) = u(a(\pi_{v,x})) \cdot \br_G(a_{nr}(\pi_{v,x})) \cdot
\br_G(\vphi(\pi_{v,x})),
\end{equation}
then Proposition \ref{P:reseq}(c) implies that
$\Det(\br_G(\vphi(\pi_{v,x})))$ is independent of the choice of
$\pi_{v,x}$. Hence, if $\vphi(\pi_{v,x}) = \vphi_{v,s}$, say, then it
follows from Proposition \ref{P:reseq}(b) that the subgroup $\langle s
\rangle$ of $G$ (up to conjugation) and the determinant
$\Det(\br_G(\vphi_{v,s}))$ of the resolvend $\br_G(\vphi_{v,s})$ do
not depend upon the choice of $\pi_{v,x}$.

We write $q: G \to D$ for the obvious quotient map, and we use
the same symbol $q$ for the induced maps  
\begin{align*}
&K_0(O_FG, F^c) \to K_0(O_FD, F^c), \quad H^1(F, G) \to
H^1(F, D),\\
&H^1(F_v, G) \to H^1(F_v, D).
\end{align*}
Set
\[
\ov{x}:= q(x),\quad \pi_{v, \ov{x}}:= q(\pi_{v,x}).
\]
Then $\ov{x} \in \LC(O_FD)_S$ with
\[
\lambda_v(\ov{x}) = \bpsi_{D,v}(\pi_{v,\ov{x}})
\]
for each finite place $v$ of $F$, and $\ov{x}$ is fully ramified.

By hypothesis, $\LC(O_FD)_S$ satisfies Property R, and so there exists
$\rho \in \Hom(\Omega_F, D)$ with $[\rho] \in H^1_t(F, D)$ such that
\begin{equation} \label{E:d1}
\ov{x} = \bpsi_D([\rho])
\end{equation}
and
\begin{equation} \label{E:d2}
\rho \mid_{I_v} = \pi_{v,\ov{x}} \mid_{I_v}
\end{equation}
for each finite place $v$ of $F$. Hence, for each such $v$, we have
that
\[
\Det(\br_D(\vphi(\rho_v))) = \Det(\br_D(\vphi(\pi_{v,\ov{x}}))),
\]
using the notation established in \eqref{E:sf} above concerning
Stickelberger factorisations. As $\ov{x}$ is fully ramified, we see
from the proof of Theorem \ref{T:fieldres} that $\rho$ is surjective,
and so $F_{\rho}$ is a field.  We also see that, as $\ov{x} \in
\LC(O_FD)_S$, the extension $F_{\rho}/F$ is unramified at all places
dividing $|D|$. Furthermore, if $v \mid l$ (so $v \in S$), then since
$\pi_{v,x}$ is unramified, the same is true of $\pi_{v,\ov{x}}$, and
so $F_{\rho}/F$ is also unramified at $v$. Hence, as $F \cap \mu_{l} =
\{1\}$ by hypothesis, it follows that $F_{\rho} \cap \mu_{l} = \{1\}$
also.

For each finite place $v$ of $F$, we are now going to use the fact
that $x \in \LC(O_FG)$ to construct a lift $\ti{\rho}_v \in
\Hom(\Omega_{F_v}, G)$ of $\rho_v$ such that $[\ti{\rho}_v] \in
H^1_t(F_v, G)$ with
\begin{equation} \label{E:rhopi}
\ti{\rho}_{v} \mid_{I_v} = \pi_{v,x} \mid_{I_v}.
\end{equation}

To do this, we first observe that if $\vphi(\pi_{v,x}) =
\vphi_{v,s}$, then $\vphi(\pi_{v,\ov{x}}) = \vphi_{v,\ov{s}}$, where
$\ov{s} = q(s)$, and so we have
\[
\vphi(\rho_v) = \vphi(\pi_{v,\ov{x}}) = \vphi_{v,\ov{s}}
\]
(see \eqref{E:d2}).

Next, we write
\[
\rho_v = \rho_{v,r} \cdot \rho_{v,nr},
\]
with $[\rho_{v,nr}] \in H^{1}_{nr}(F_v, D)$ (see
\eqref{E:pifac}). Since $\rho_{v,nr}$ is unramified, Proposition
\ref{P:nrlift} implies that $[\rho_{v,nr}]$ may be lifted to
$[\ti{\rho}_{v,nr}] \in H^{1}_{nr}(F_v, G)$. Let $a(\ti{\rho}_{v,nr})$
be a normal integral basis generator of
$F_{\ti{\rho}_{v,nr}}/F_v$. Then $\br_G(a(\ti{\rho}_{v,nr})) \cdot
\br_G(\vphi_{v,s})$ is the resolvend of a normal integral basis
generator of a tame Galois $G$-extension $F_{\ti{\rho}_v}/F_v$ such
that $q([\ti{\rho}_v]) = \rho_v$ (cf. Corollary \ref{C:genfac} and
Theorem \ref{T:stickfac}). As $\vphi(\pi_{v,x}) =
\vphi_{v,s}$, we see from the construction of $\ti{\rho}$ that
\[
\ti{\rho}_{v}\mid_{I_v} = \pi_{v,x} \mid_{I_v} = \ti{\vphi}_{v,s},
\]
where $[\ti{\vphi}_{v,s}] \in H^{1}_{t}(I_v, G)$ is defined in Remark
\ref{R:ramphi}. The map $\ti{\rho}_v$ is our desired lift of
$\rho_v$.

We are now ready to apply the results contained in Section
\ref{S:neu}.  Consider the following diagram:
\[
\begin{CD}
0 @>>> B @>>> G @>{q}>> D @>>> 0 \\
@.         @.         @.             @AA{\rho}A  @. \\
@.         @.          @.            \Omega_F   @.\\
\end{CD}
\]
The group $D$ acts on $B$ via inner automorphisms, and we view $B$ as
being an $\Omega_F$-module via $\rho$. Then $B$ is a simple
$\Omega_F$-module because $B$ is a minimal normal subgroup of $G$ and
$\rho$ is surjective. The field of definition $F(B)$ of $B$ is
contained in the field $F_{\rho}$, and so in particular $F(B)$ contains no
non-trivial $l$-th roots of unity. We are going to construct an
element $\Pi \in \cHom_{D}(\Omega_F, G)$ such that
\[
\Pi \mid_{I_v} = \pi_{v,x} \mid_{I_v}
\]
for each finite place $v$ of $F$. This will be accomplished in the
following three steps:
\smallskip

I. We begin by observing that our construction above of a lift
$\ti{\rho}_v$ of $\rho_v$ for each finite $v$ shows that
$J_f(\cHom_{D}(\Omega_F, G))$ is non-empty.  Let $\cS$ be the set of
finite places $v$ of $F$ at which $x$ is ramified or $v \mid
|G|$. Theorem \ref{T:nl} implies that there exists $\Pi_1 \in
\cHom_{D}(\Omega_F, G)$ such that $\Pi_{1,v} = \ti{\rho}_v$ for all $v
\in \cS$. Observe that $\Pi_1$ is unramified at all $v \mid |G|$
because $\ti{\rho}_v$ is unramified at these places (see
\eqref{E:rhopi}). Note also that $\Pi_1$ may well be ramified outside
$\cS$.
\smallskip

II. Recall that $\cHom_{D}(\Omega_F, G)$ (respectively
$\cHom_{D}(\Omega_{F_v}, G)$ for each finite $v$) is a principal
homogeneous space over $H^1(F, B)$ (respectively $H^1(F_v, B)$). Let
$\cS_1$ denote the set of finite places $v \notin \cS$ of $F$ at which
$\Pi_1$ is ramified. For each $v \in \cS_1$, choose $y_v \in H^1(F_v,
B)$ so that $y_v \cdot \Pi_{1,v} \in \cHom_{D}(\Omega_{F_v}, G)$ is
unramified.

Now apply Definition \ref{D:ohell} (with $\Gamma = B$ and $T = \cS$) to
obtain an ideal $\fa = \fa(\cS)$ and an integer $N = N(\cS)$ as described
there. Theorem \ref{T:ncl} implies that there exists an element $z \in
H^1(F, B)$ such that

(z1) $z_v = y_v$ for all $v \in \cS_1$;

(z2) $z_v = 1$ for all $v \in \cS$;

(z3) If $v \notin \cS \cup \cS_1$, then $z_v$ is cyclic, and if $z_v$ is
ramified, then $v$ splits completely in $(F(B) \cdot F(\fa^N))/F$, where $F(\fa^N)$
denotes the ray class field of $F$ modulo $\fa^N$. 

Set $\Pi_2:= z \cdot \Pi_1 \in \cHom_{D}(\Omega_F, G)$.  Note that, as
$z$ might possibly be ramified, the homomorphism $\Pi_2$ might be
ramified outside $\cS$. We shall eliminate any such potential
ramification in the third and final step.
\smallskip

III. Let $\cS_z$ be the set of places of $F$ at which $z$ is ramified
(so $\cS \cap \cS_z = \emptyset$). We see from (z3) that each $v \in
\cS_z$ is totally split in $F(\fa^N)/F$. Hence Theorem \ref{T:bcl}
implies that for each $v \in \cS_z$, we may choose $b(v) \in H^1(F,
B)$ such that

(b1) $b(v)_w = 1$ for all $w \in \cS$;

(b2) $b(v) \mid_{I_v} = z^{-1}_{v} \mid_{I_v}$;

(b3) $b(v)$ is unramified away from $v$.

Set 
\[
\Pi := \left[ \left(\prod_{v \in S_{z}} b(v)\right) \cdot z\right] \cdot \Pi_2.
\]
Then it follows directly from the construction of $\Pi$ that we have
\begin{equation} \label{E:tgod}
\Pi \mid_{I_v} = \pi_{v,x} \mid_{I_v}
\end{equation}
for all finite places $v$ of $F$.
\smallskip

We claim that 
\[
x = \bpsi(\Pi).
\]
To show this,  let $\tau = \bpsi(\Pi)^{-1} \cdot x$. We see from \eqref{E:tgod}
that 
\[
\lambda_v(\tau) \in \Image(\bpsi^{nr}_{v})
\]
for every finite place $v$ of $F$. As either $G$ admits no irreducible
symplectic characters or $F$ has no real places, and as $(h_F, |G|) =
1$ by hypothesis, Proposition \ref{P:unram}(b) implies that $\tau =
0$. Hence $x = \Psi(\Pi)$, as claimed.

This completes the proof that $\LC(O_FG)_S$ satisfies Property R.
\end{proof}

Theorem \ref{T:domco} (in conjunction with Proposition \ref{P:rab})
yields an abundant supply of groups $G$ for which $\LC(O_FG)_S$
satisfies Property R (for a suitable choice of $S$), and therefore
also for which Theorem \ref{T:rgreat} holds.  Here is an example of
this.

\begin{theorem} \label{T:rodd}
Let $G$ be of odd order. Suppose that $(|G|, h_F)=1$ and that $F$
contains no non-trivial $|G|$-th roots of unity. Let $S$ be any finite
set of finite places of $F$ containing all places dividing $|G|$. Then
$\LC(O_FG)_S$ satisfies Property R.
\end{theorem}

\begin{proof}
We shall establish this result by induction on the order of
$G$. We first note that Proposition \ref{P:rab} implies that the
theorem holds if $G$ is abelian.

Suppose now that $G$ is an arbitrary finite group of odd order. As
$|G|$ is odd, a well known theorem of Feit and Thompson (see
\cite{FT63}) implies that $G$ is soluble. Hence $G$ has an abelian
minimal normal subgroup $B$ such that $l \cdot B = 0$ for some odd
prime $l$ (see e.g. \cite[Theorem 5.24]{Rot}), and there is an exact
sequence
\[
0 \to B \to G \to D \to 0
\]
with $D$ soluble. As $|G|$ is odd, $G$ admits no non-trivial
irreducible symplectic characters. We may therefore suppose by
induction on the order of $G$ that $\LC(O_FD)_S$ satisfies Property
R. The desired result now follows from Theorem \ref{T:domco}.
\end{proof}

\begin{remark} \label{R:even}
It follows from Theorem \ref{T:ab} that in Theorem \ref{T:domco}, we
may take $D$ to be a finite abelian group of arbitrary order (subject
of course to the obvious constraint that the number field $F$ is such
that all other conditions of Theorem \ref{T:domco} are
satisfied). This enables one to show that Property R holds for many
non-abelian groups of even order (e.g. $S_3$). However, if for example
$G$ is a non-abelian $2$-group (e.g. $H_8$), then because $\mu_2
\subseteq F$ for any number field $F$, we can no longer appeal to
Neukirch's Lifting Theorem, and our proof of Theorem \ref{T:domco}
fails. It appears very likely that new ideas are needed to establish
Property R in such cases (cf. also the remarks contained in the final
paragraph of \cite[Introduction]{Neu}, where a similar difficulty is
briefly discussed in the context of the inverse Galois problem for
finite groups). \qed
\end{remark}

We can now prove Theorem \ref{T:D} of the Introduction.

\begin{theorem} \label{T:odd}
Let $G$ be of odd order and suppose that $(|G|, h_F)=1$, where $h_F$
denotes the class number of $F$. Suppose also that $F$ contains no
non-trivial $|G|$-th roots of unity. Then $\cR(O_FG)$ is a subgroup of
$\Cl(O_FG)$.  If $c \in \cR(O_FG)$, then there exist infinitely many
$[\pi] \in H^1_t(F, G)$ such that $F_{\pi}$ is a field and $(O_{\pi})
= c$. The extensions $F_{\pi}/F$ may be chosen to have ramification
disjoint from any finite set $S$ of places of $F$.
\end{theorem}

\begin{proof}
This is an immediate consequence of Theorems \ref{T:rodd} and
\ref{T:rgreat}.
\end{proof}


\newpage

\end{document}